\def\coleins{} 
\def\colzwo{} 
\numberwithin{equation}{section}
\theoremstyle{plain}
\newtheorem{theorem}{Theorem}[section]
\newtheorem{lemma}[theorem]{Lemma}
\newtheorem{definition}[theorem]{Definition}
\theoremstyle{definition}
\theoremstyle{remark}
\newtheorem{remark}[theorem]{Remark}
\newcommand{\N}{\ensuremath{\mathbb{N}}}
\newcommand{\Z}{\ensuremath{\mathbb{Z}}}
\renewcommand{\P}{\ensuremath{\mathbb{P}}}
\newcommand{\E}{\ensuremath{\mathbb{E}}}
\newcommand{\1}{\ensuremath{\mathbbm{1}}}
\newcommand{\M}{\ensuremath{\mathcal{M}}}
\renewcommand{\H}{\ensuremath{\mathcal{H}}}
\newcommand{\Q}{\ensuremath{\mathcal{Q}}}
\def\QGT{\Q_\theta^\text{{\ck $\Lambda$-}GT}}
\def\QSD{\Q_\theta^\text{{\ck $\Lambda$-}SD}}
\def\QSingleHUW{\Q_\theta^{\text{\ck {\ck $\Lambda$-}HUW}^1}}
\def\QPairHUWtwoStepA{\Q_\theta^{\text{\ck {\ck $\Lambda$-}HUW}^2\alpha}}
\def\QPairHUWtwoStepB{\Q_\theta^{\text{\ck {\ck $\Lambda$-}HUW}^2\beta}}
\def\QPairHUWoneStepA{\Q_\theta^{\text{\ck {\ck $\Lambda$-}HUW}^2{\ck B}}}
\def\QPairHUWoneStepB{\Q_\theta^{\text{\ck {\ck $\Lambda$-}HUW}^2{\ck A}}}
\def\QPairHUWmixed{\Q_\theta^{\text{\ck {\ck $\Lambda$-}HUW}^{1.5}}}
\def\ck{}
\newlength{\nodedist}
\newlength{\defaultleveldist}
\newlength{\defaultsibdist}
\begin{document}

\title
{Importance sampling for Lambda-coalescents\\ in the infinitely many sites model}

\author{Matthias Birkner\footnote{Johannes-Gutenberg-Universit\"{a}t Mainz, Institut f\"{u}r Mathematik, Staudingerweg 9, 55099 Mainz, Germany,
e-mail: {\tt birkner@mathematik.uni-mainz.de}}\ ,
Jochen Blath\footnote{Technische Universit\"{a}t Berlin, Institut f\"{u}r Mathematik, Strasse des 17.\ Juni 136, 10623 Berlin, Germany,
e-mail: {\tt blath@math.tu-berlin.de}} \hspace{0em} and
Matthias Steinr\"ucken\footnote{Department of Statistics, University of California, 367 Evans Hall MC 3860, Berkeley, CA 94720-3860, USA, e-mail: {\tt steinrue@stat.berkeley.edu} (corresponding author)}}

\date{5th of May, 2011}

\maketitle

\begin{abstract}
 We present and discuss {\ck new} importance sampling schemes for the
 approximate computation of the sample probability of observed
 genetic types in the infinitely many sites model from population
 genetics.  More specifically, we extend the `classical framework',
 where genealogies are assumed to be governed by Kingman's 
 coalescent, to the more general class of Lambda-coalescents {\ck and develop further 
 Hobolth et.~al.'s (2008) idea of deriving importance sampling schemes based on `compressed
 genetrees'.
 The resulting schemes extend} earlier work by 
 Griffiths and Tavar\'e (1994), 
 Stephens and Donnelly (2000), Birkner and Blath (2008) and Hobolth
 et.~al.~(2008). {\ck We conclude with a performance comparison of classical and new schemes for 
 Beta- and Kingman coalescents.}

\end{abstract}

AMS subject classification.
{\em Primary:}
				62F99 
{\em Secondary:}
				62P10; 
				92D10; 
				92D20   

\vspace{2mm}

Keywords: Lambda-coalescent, infinitely many sites model, likelihood estimation, importance sampling, population genetics

\vspace{2mm}

\section{Introduction}

\subsection{Aims and outline of the paper}

 In the present paper we derive and discuss importance sampling schemes for the
 approximate computation of the sampling probability of observed
 genetic types in the infinitely many sites model (ISM), which is used for the analysis 
 of DNA sequence data sampled from a population.  

 In particular, we extend earlier results on this classical problem of likelihood estimation 
 in mathematical genetics 
 in two directions. 

 First, we consider genealogies which may be governed by any 
 member of the rather general class of Lambda-coalescents 
 instead of restricting to the
 classical Kingman's coalescent framework only. These genealogies offer more 
 flexibility in the modelling of `exceptional genealogical events' like 
 extreme reproduction and selective sweeps, see e.g.
 \cite{Birkner2008} for a brief discussion. 
 In particular, we derive the analogues of the 
 `Kingman-scenario' based importance sampling schemes of 
 Griffiths and Tavar\'e \cite{Griffiths1994}, 
 Stephens and Donnelly \cite{Stephens2000} and Hobolth, Uyenoyama and Wiuf \cite{Hobolth2008}.

 For the second direction of our investigation, observe that both the schemes derived by 
 Ethier and Griffiths and Stephens and Donnelly do not take any
 specific information about the genealogical distance of types 
 (which is provided by the infinitely many sites model) 
 into account. Indeed, the latter proposal has been explicitly derived by means of optimality 
 for parent-independent mutation models which in particular 
 do not provide information about genealogical distance. 
 Hobolth et.~al.\ \cite{Hobolth2008} proposed a scheme which 
 can be regarded as a starting point to overcome this
simplification. 
 Indeed, for their proposal distribution, they `compress' the observed 
 genealogical tree to a tree where only one segregating site remains, 
 derive optimal proposals for this compressed tree, and show how to combine them to obtain 
 a proposal for the original tree.
 We show how to extend this method to compressed trees with two (and, in principle more)
 segregating sites, 
 which retain information about the topology of the original tree and the 
 genealogical distance of the types of the sample, leading to further improved importance sampling
 schemes (also in the Lambda-coalescent scenario). We `pay' for this additional genealogical information 
 with an increase of complexity in the derivation and of the proposal 
 scheme.
 Along the way, we discuss the optimality of the analogue of the 
 Stephens and Donnelly proposal for the Lambda-coalescent in the
 infinitely many alleles model.\\

 The paper is organised as follows. In Section~\ref{ssn:genetree} we discuss in detail the combinatorial
 framework of samples in the infinitely many sites model. In Section~\ref{sseq:rec} we formulate various
 recursions which form the basis of our importance sampling schemes.
 Section~\ref{sseq:iii} and Section~\ref{ssn:opt} discuss the creation of sample histories and the basic framework
 for importance sampling. We will also briefly discuss the notion of optimality. 
 In Section~\ref{importance_sec_proposal_schemes} we extend earlier and derive new important sampling schemes, 
 whose performance we will analyse in Section~\ref{importance_sec_performance}. 
In the appendix, 
we will provide an algorithm for generating
 sample histories (\ref{sect:algo}), derive some auxiliary 
results on the combinatorics of the infinitely many sites model 
(\ref{sect:combfactor}) and briefly discuss computational aspects 
(\ref{sect:precomp}) as well as estimation of event times, given the 
observed data (\ref{sect:histestimation}). 

\subsection{Genealogies and samples in the infinitely many sites model}
\label{ssn:genetree}

We consider samples taken from a large panmictic population of
constant size evolving due to random mating and mutation according to
the infinitely many sites model. We study the distribution of (neutral) genetic 
variation at a single locus and may therefore assume that the genealogy 
of the sampled genes is described by an exchangeable coalescent process. 
Extending the classical framework of \cite{Ethier1987}, we consider in particular 
genealogies governed by so-called Lambda-coalescents, 
hence allowing multiple, but not simultaneous multiple 
collisions. 

Recall that Pitman (\cite{Pitman1999}) and Sagitov (\cite{Sagitov1999})
introduced and discussed coalescents in which 
more than just two blocks may merge at a time.
Informally, a Lambda-coalescent is a partition-valued Markov process, whose dynamics is as follows:
Whenever there are $b \in \N$ blocks in the partition at present, each
$k$-tuple of blocks (where $2 \le k \le b \le n$) merges to form a
single block at rate $\lambda_{b,k}$, where the rates are given by
\begin{equation}
\label{Lambdarates}
\lambda_{b, k} = \int_0^1 x^{k}(1-x)^{b-k} \frac{1}{x^2} \Lambda(dx),
\end{equation}
for some finite measure $\Lambda$ on the unit interval. Further, denote by
\begin{equation}
\label{Lambdatotalrates}
\lambda_b := \sum_{k=2}^{b} \lambda_{b, k}
\end{equation}
the total rate at which mergers happen while there are $b$ blocks present.

Note that the family of Lambda-coalescents is rather large, and in
particular cannot be parametrised by a few real parameters. Important
examples include $\Lambda=\delta_0$ (Kingman's coalescent) and
$\Lambda=\delta_1$ (leading to star-shaped genealogies, i.e.~one huge
merger into one single block).  Later, we will also be concerned with an
important parametric subclass of $\Lambda$-coalescents, namely the
so-called {\em Beta-coalescents}, where $\Lambda$ has a
Beta$(2-\alpha, \alpha)$-density for some $\alpha \in [1,2]$.
Note that such coalescents occur as limits of genealogies of 
population models, where single individuals
may occasionally be able to produce almost instantaneously a 
non-negligible fraction of the total population size, 
see e.g.~\cite{Birkner2009} for a review. W.l.g.~we assume that 
$\Lambda([0,1]) =1$.\\

We now introduce {\ck detailed} notation to describe samples in the infinitely many sites model.
Note that we represent our data in the form presented in
\cite{Ethier1987} resp.~\cite{Griffiths1994}.  A discussion of how to
transform actual DNA sequence data into this format can be found
e.g.~in \cite[Section~2.1]{Birkner2008} 
(assuming known ancestral types for each segregating site).
{\ck Although the notation for the description of samples in the infinitely many sites model under various equivalence classes seems to 
be relatively standard, we chose to provide full details here, including several formulations of the recursions for observed type probabilities,
since the treatment of the combinatorics of samples is somewhat inconsistent across the literature (see, e.g., Remark~\ref{rem:themess} for some of the subtleties).}

{\ck W}e represent a sample of size $n$ by a
vector ${\bf x} = (x_1,\ldots,x_n)$ of $n$ genetic types, where each type $x_i$ 
is given as a list of positive integers representing mutations
\begin{equation}
\label{eq:type1}
x_i = (x_{i0},\ldots,x_{ij_i}) \in \Z_+^{j_i}.
\end{equation}
Such an ${\bf x}$ is called a \emph{tree} if
\begin{enumerate}
\item for fixed $i \in \{1,\ldots,n\}$ the coordinates $x_{ij}$ are distinct
for all $j \in \Z_+$,
\item whenever for some $i,i' \in \{1,\ldots,n\}$, $j,j' \in \Z_+$, $x_{ij} =
x_{i'j'}$ holds, then $x_{i,j+l} = x_{i',j'+l}$ holds for all $l \in \Z_+$,
\item there exist $j_1,\ldots,j_n \in \Z_+$
such that $x_{1j_1} = x_{2j_2}\ldots =
x_{nj_n}$.
\end{enumerate}
The space of all trees of size $n$ is denoted by $\mathcal{T}_n$. 

Next, we introduce an equivalence relation `$\sim$' on
$\mathcal{T}_n$, where two trees ${\bf x},{\bf y} \in \mathcal{T}_n$
are said to be equivalent if there exists a bijection $\zeta \colon
\Z_+ \to \Z_+$ such that $y_{ij} = \zeta (x_{ij})$ holds for all $i
\in \{1,\ldots,n\}$ and $j \in \Z_+$. Denote by
$(\mathcal{T}_n\slash\!\!\sim)$ the set of equivalence classes under
the relation $\sim$ and by $(\mathcal{T}_n\slash\!\!\sim)_0$ the
restriction of this set to those classes where $x_i \neq x_j$ if
$i\neq j$. The number of segregating sites $s$ is given as the number
of different $x_{i,j}$ that appear in at least one but not all
elements in ${\bf x}$. Note that this does not depend on the actual
representative of the class. Denote by
$(\mathcal{T}_{s,n}\slash\!\!\sim)$ the set of equivalence classes
representing a tree of size $n$ with $s$ segregating sites.
Note that for simplicity, we will always assume $x_{ij} \in \{0,1,\ldots,s\}$. 
Recall that the \emph{complexity} of a sample of size $n$ with $s$ 
segregating sites is defined to be $n+s-1$. 
Elements of $(\mathcal{T}_n\slash\!\!\sim)$ are called \emph{unlabelled} 
trees in \cite[p.~528, l.~-10]{Ethier1987}.
We will sometimes emphasise 
the fact that the order of the samples (equivalently, of the types in the 
case of distinct entries) plays a role by calling them  
\emph{ordered unlabelled} trees. 

A type configuration ${\bf x} = (x_1, \ldots, x_n) \in
(\mathcal{T}_n\slash\!\!\sim)$ can be represented by a pair $({\bf t},{\bf a})$
of a tree ${\bf t} \in (\mathcal{T}_d\slash\!\!\sim)_0$ of the different types
that occur in ${\bf x}$ and an ordered partition ${\bf a} = \big( A_1,\ldots,A_d
\big)$ that specify at which position in the sample the corresponding type occurs 
(i.e.~we think of {\em ordered} types). The number
of distinct types is denoted by $d = \big\vert \{ x_i \colon i=1,\ldots,n \}
\big\vert$. Furthermore, $A_i = \{ j \colon t_i = x_j\}$, $A_i \cap A_j =
\emptyset\;\forall i \neq j$ and $\bigcup_{i=1}^d A_i = \{1,\ldots,n\}$ holds.
Note that this notation introduces an artificial order of the occurring types. In
the sequel the actual sample numbers of the types will play no role, but rather the
multiplicities. For this purpose define ${\bf n}^{({\bf a})} := (\vert A_1
\vert, \ldots, \vert A_d \vert)$, the vector containing the sizes of the sets in
${\bf a}$. 
We denote by 
\[({\bf t}, {\bf n}) \in \cup_{d=1}^\infty 
(\mathcal{T}_d\slash\!\!\sim)_0 \times \N^d =: \mathcal{T}^*
\] 
(where ${\bf n}={\bf n}^{({\bf a})}$) an ordered type configuration with 
multiplicities. Note that for a given $({\bf t}, {\bf n})$ with $d$ 
types, there are $n!/(n_1!\cdots n_d!)$ different choices of ${\bf a}$ 
consistent with ${\bf n}$.

Finally, we define the equivalence relation `$\approx$' by saying that
\begin{equation}
({\bf t},{\bf a}) \approx ({\bf t}',{\bf a}')
\end{equation}
holds if there exist a bijection $\zeta \colon \{1,\ldots,s\} \to
\{1,\ldots,s\}$ and a permutation $\sigma \in S_d$ such that $x_{ij}
= \zeta(x_{\sigma(i)j})$ and ${\bf n}^{({\bf a})} = ({\bf n}^{({\bf
a})})_\sigma$, where ${\bf x}$ is representative of the class ${\bf t}$ and
$\sigma$ is applied to the vector componentwise. 
Note that under this
equivalence class the order of the types is lost. We denote such an equivalence
class by $[{\bf t},{\bf n}]=[{\bf t},{\bf n}^{({\bf a})}]$ and call it an \emph{unnumbered
unlabelled sample configuration with unordered types}, \emph{sample
configuration}, or \emph{genetree}, because it accounts for the fact that in a
sample obtained from a population, the numbering of the types and mutations is
artificially imposed. Summarising, in the following we will 
consider 
equivalence classes
\begin{equation}
\label{equivalenceclasses}
({\bf t},{\bf a}) \in (\mathcal{T}_n\slash\!\!\sim) \quad \mbox{ and } \quad 
[{\bf t},{\bf n}] \in (\mathcal{T}_n\slash\!\!\approx).
\end{equation}
Note that $[{\bf t},{\bf n}]$ in our notation denotes
$\big[\Phi_{\bf n}({\bf t})\big]$ in the notation of \cite{Ethier1987}, 
with $[\cdot]$ referring to the equivalence class under $\approx$. 

However, one should be warned that there are several combinatorial 
conventions present in the literature, 
see Remark~\ref{rem:themess} for a discussion of some 
of the ensuing subtleties.

\begin{remark}
Note that by ignoring the tree structure given by ${\bf t}$ and just considering the partition ${\bf n}$ one can map a sample under the infinitely many sites model to a sample in the infinitely many alleles model.
This observation underlies some of the importance sampling schemes discussed below, see Section \ref{importance_sec_griffiths_tavare}.
However, the additional information provided by the infinitely many sites model can be exploited to find 
more efficient proposal distributions, see Section \ref{sseq:huw}.
\end{remark}

\subsection{Recursion for tree probabilities}
\label{sseq:rec}

In this section we recall from \cite{Birkner2008} 
recursions which allow the computation of the probability of observing a given 
type configuration $({\bf t},{\bf a})$.
In the sequel, we always think of randomly ordered types. 

Indeed, with the above notation, the probability of obtaining a 
given sample $({\bf t},{\bf a})$ from the stationary
distribution of the population under the
infinitely many sites mutation model satisfies the recursion
\begin{equation}
\label{recursion}
\begin{split}
p({\bf t},{\bf a}) = &\frac{1}{rn + \lambda_n} \sum_{i:\vert A_i\vert\geq 2} 
                     \sum_{k=2}^{\vert A_i\vert} {\vert A_i\vert\choose k } 
                      \lambda_{n,k} \, p({\bf t},{\bf a} - (k-1){\bf e}_i)\\
& + \frac{r}{rn + \lambda_n} \sum_{i:\vert A_i\vert=1, x_{i0} 
                      \text{unique}, \atop \mathfrak{s}({\bf x}_i) 
                          \neq {\bf x}_j \forall j} p(\mathfrak{s}_i({\bf t}),{\bf a})\\
& + \frac{r}{rn + \lambda_n} \frac{1}{d} \sum_{i:\vert A_i\vert=1, \atop  x_{i0} 
                    \text{unique}} \sum_{j:\mathfrak{s}({\bf x}_i) = {\bf x}_j} 
                   p(\mathfrak{r}_i({\bf t}),\mathfrak{r}_i({\bf a} + {\bf e}_j))
\end{split}
\end{equation}
with the boundary condition $p\big((0), (\{1\})\big)=1$. Here, 
$x_{i0}$ unique means that mutation $x_{i0}$ occurs only in type $i$. 
The operator $\mathfrak{s}({\bf x})$ [the operator $\mathfrak{s}_i({\bf t})$]
removes the outmost mutation [from type $i$] and $\mathfrak{r}_i({\bf
 t})$ removes the $i$-th component of the vector ${\bf t}$.  By
$\mathbf{a}-(k-1){\bf e}_i$ we mean a partition obtained from $\mathbf{a}$
by removing $k-1$ elements from the {\em set} $a_i$ (with implicit
renumbering of the samples so that the result is a partition of $\{1,
\dots, n-k+1\}$). Note that by symmetry, the type probability $p$ will
not depend on the actual choice. 
Finally, $\mathbf{a}+{\bf e}_j$ denotes the partition obtained from ${\bf a}$
by adding an arbitrary element of $\N$ to the set $a_j$ that is not
yet contained in any other set $a_l, l=1, \dots, d$.

\eqref{recursion} can be seen by conditioning on the most recent event
in the coalescent history (or, equivalently, in the
lookdown-construction into which the so-called `$\Lambda$-Fleming-Viot
process', describing the population forwards in time, can be
embedded), see \cite[Section~4]{Birkner2008} and
\cite[Section~3.3.2]{Steinruecken2009} for details and proofs.

Note that in the `Kingman-case', i.e.~$\Lambda = \delta_0$, this
essentially reduces to the recursion provided by Ethier and Griffiths
in
\cite[Corollary~4.2]{Ethier1987} (see also Remark~\ref{rem:themess} 
below).
The relation between the sampling probabilities of
ordered numbered samples $p({\bf t},{\bf a})$ and the probabilities of
the corresponding unordered unnumbered samples $p[{\bf t},{\bf n}]$ is
given by
\begin{equation}\label{eq_relation_ordered_unordered}
p[{\bf t},{\bf n}] = p({\bf t},{\bf a}) \frac{n!}{n_1!\cdots n_d!}\frac{d!}{c({\bf t},{\bf n})}
= p({\bf t},{\bf n}) \frac{d!}{c({\bf t},{\bf n})}
\end{equation}
Here, ${\bf n} = {\bf n}^{({\bf a})}$, $n!/(n_1!\cdots n_d!)$ is 
the number of ordered partitions of $\{1,\dots,n\}$ into 
$d$ subsets with the given sizes, corresponding to the $d$ types and   
\begin{equation}
\label{eq_ctn_definition}
c({\bf t},{\bf n}) := \Big\vert\big\{ \sigma \in S_d : t \sim t_\sigma \text{ and } {\bf n} = {\bf n}_{\sigma} \forall i\big\}\Big\vert,
\end{equation}
where ${\bf n}_{\sigma}=(n_{\sigma(1)},\dots,n_{\sigma(d)})$. 
There are $d!$ possible orders for the types if the mutations 
carry distinct labels, each of which is equivalent to $c({\bf t},{\bf n})$ 
others if mutation labels are disregarded. Thus, there are 
$d!/c({\bf t},{\bf n})$ different re-orderings of the 
types that cannot be transformed into each other by re-labelling the 
mutations, explaining \eqref{eq_relation_ordered_unordered}. 

Note that $c({\bf t},{\bf n})=c([{\bf t},{\bf n}])$ depends in fact only on
$[{\bf t},{\bf n}]$.
For a constructive way to evaluate 
$c({\bf t},{\bf n})$, see Lemma~\ref{computingct}.

Recursion~\eqref{recursion} can be combined with
relation~\eqref{eq_relation_ordered_unordered} to obtain a recursion for the
unordered sampling probabilities $p[{\bf t},{\bf n}]$:
\begin{equation}\label{real_ims_recursion}
\begin{split}
p[{\bf t},{\bf n}] =& \frac{1}{\lambda_n + nr}\sum_{i:n_i\geq 2} \sum_{k=2}^{n_i} {n \choose k} \lambda_{n,k}\frac{n_i-k+1}{n-k+1} \frac{c\big({\bf t},{\bf n} - (k-1){\bf e}_i\big)}{c({\bf t},{\bf n})} p\big[{\bf t},{\bf n} - (k-1){\bf e}_i\big]\\
& + \frac{r}{\lambda_n + nr} \sum_{i:n_i=1, x_{i0} \text{unique}, \atop \mathfrak{s}({\bf x}_i) \neq {\bf x}_j \forall j} \frac{c\big(\mathfrak{s}_i({\bf t}),{\bf n})\big)}{c({\bf t},{\bf n})} p[\mathfrak{s}_i ({\bf t}), {\bf n}]\\
& + \frac{r}{\lambda_n + nr} \sum_{i:n_i=1,\atop x_{i0} \text{unique}} \sum_{j:\mathfrak{s}({\bf x}_i) = {\bf x}_j}  (n_j + 1) \frac{c\big(\mathfrak{r}_i ({\bf t}), \mathfrak{r}_i ({\bf n} + {\bf e}_j)\big)}{c({\bf t},{\bf n})} p\big[\mathfrak{r}_i ({\bf t}), \mathfrak{r}_i ({\bf n} + {\bf e}_j)\big]
\end{split}
\end{equation}
for the sampling probability of the unordered sample $[{\bf t}, {\bf n}]$.  
Again, we have 
the boundary condition $p\big((0), (1)\big)=1$.
In terms of samples with ordered types $({\bf t}, {\bf n})$, the recursion 
reads 
\begin{equation}\label{ordered_ims_recursion}
\begin{split}
p({\bf t},{\bf n}) =& \frac{1}{\lambda_n + nr}\sum_{i:n_i\geq 2} \sum_{k=2}^{n_i} {n \choose k} \lambda_{n,k}\frac{n_i-k+1}{n-k+1}  p\big({\bf t},{\bf n} - (k-1){\bf e}_i\big)\\
& + \frac{r}{\lambda_n + nr} \sum_{i:n_i=1, x_{i0} \text{unique}, \atop \mathfrak{s}({\bf x}_i) \neq {\bf x}_j \forall j}  p(\mathfrak{s}_i ({\bf t}), {\bf n})\\
& + \frac{r}{\lambda_n + nr} \frac1d 
\sum_{i:n_i=1,\atop x_{i0} \text{unique}} \sum_{j:\mathfrak{s}({\bf x}_i) = {\bf x}_j}  (n_j + 1)  p\big(\mathfrak{r}_i ({\bf t}), \mathfrak{r}_i ({\bf n} + {\bf e}_j)\big),
\end{split}
\end{equation}
with the usual boundary condition.

\begin{remark} \label{rem:themess}
Note that the recursion given by Ethier and Griffiths in \cite[Corollary~4.2]{Ethier1987} closely 
resembles our recursion~\eqref{recursion} in the case $\Lambda=\delta_0$, $r=\theta/2$, 
up to a missing factor $1/d$ in the last term on the right-hand side. 
This subtle discrepancy can be resolved as follows.

As before, let $({\bf t}, {\bf a})$ denote an unlabelled 
ordered sample of $d$ (ordered) types stored in ${\bf t}$ 
together with an ordered partition ${\bf a}=(A_1, \dots, A_d)$ 
and let $[{\bf t}, {\bf n}]$
be the corresponding sample with $d$ unordered types 
stored in ${\bf t}$ and multiplicity vector
${\bf n}=(n_1, \dots, n_d)$.  
Recall that we have 
\[
p[{\bf t}, {\bf n}]= \frac{n!}{n_1! \cdots n_d!} \frac{d!}{c({\bf t}, {\bf n})} 
 p({\bf t}, {\bf a}),
\]
where $p([{\bf t}, {\bf n}])$ solves Recursion~\eqref{recursion} and $p({\bf t}, {\bf a})$
solves \eqref{real_ims_recursion}. In contrast, let $\langle {\bf t}, {\bf a} \rangle$
denote a sample with $d$ (unordered) types and type partition ${\bf a} = \{A_1, \dots, A_d\}$, 
where the types in the vector ${\bf t} \in (\mathcal{T}_d\slash \sim)_0$ are 
ordered by appearance in the sample (any other deterministic recipe of deriving an order 
on the types from the sample would work equally well).
Then, we have
\[
p[{\bf t}, {\bf n}] = \frac{n!}{n_1! \cdots n_d!} \frac{1}{c({\bf t}, {\bf n})} 
 p\langle{\bf t}, {\bf a}\rangle,
\]
which corresponds to (4.12) in \cite{Ethier1987} and is consistent with 
the displayed equation on p.~86, l.~-13 of \cite{Griffiths1995}, and
\[
 p({\bf t}, {\bf a}) = \frac{1}{d!} p\langle{\bf t}, {\bf a}\rangle.
\]
If one interprets the notation $(T, {\bf n})$ of \cite[Corollary~4.2]{Ethier1987}
as a canonical representative of $\langle{\bf t}, {\bf a}\rangle$, then 
$p\langle{\bf t}, {\bf a}\rangle$ solves recursion (4.4) 
in \cite{Ethier1987} without additional factor $1/d$ in front of the last term.

While all the above recursions yield probability weights (resp.~likelihoods),
for practical purposes it is often easier to multiply \eqref{real_ims_recursion} by $c({\bf
   t},{\bf n})$ and thus derive from \eqref{real_ims_recursion} a
 recursion for $p^0[{\bf t},{\bf n}] := c({\bf t},{\bf n}) p[{\bf
   t},{\bf n}]$.  This is the recursion given by
 \cite[Corollary~1]{Birkner2008}, and it is also the recursion implemented
 by \texttt{genetree}\footnote{Version~9.0, available from 
\url{http://www.stats.ox.ac.uk/~griff/software.html}} 
(for the Kingman case) and
 \texttt{MetaGeneTree}\footnote{Version~0.1.2, available from 
\url{http://metagenetree.sourceforge.net} }.
However, one should be aware that the $p^0[{\bf t},{\bf n}]$ may not always be interpreted as
probability weights (for example consider 
the star-shaped tree 
${\bf t}=\big((1,0),(2,0),\dots,(d,0)\big)$ with $n=d$, 
${\bf n}=(1,\dots,1)$; for $d=22$, with $\Lambda=\delta_0$ and $r=7$, {\tt genetree} yields
$p^0[{\bf t},{\bf n}] \approx 2.26$).
Still, this method can be used to compute maximum likelihood estimators, and the
correct probability can be recovered by dividing by $c({\bf t},{\bf n})$.

\end{remark}

\section{Derivation of importance sampling schemes}
\label{seq:importance}

\subsection{Simulating sample histories}
\label{sseq:iii}

In the sequel, we will always parametrise a sample as an unlabelled
tree with ordered types $({\bf t},{\bf n})$.  Recursion
\eqref{ordered_ims_recursion} 
can be used directly to calculate sampling probabilities for a given
sample configuration $({\bf t},{\bf n})$, noting that the sample
complexity is reduced by each step. However, for practical purposes
this naive approach is only tractable for samples of rather small
complexity due to the huge number of terms involved
(the coefficient matrices of the right-hand sides of \eqref{recursion}, 
\eqref{real_ims_recursion}, 
resp.\ \eqref{ordered_ims_recursion} are substochastic, hence numerical 
stability itself is not an issue). 

One way to deal with this problem is to consider importance sampling using so-called 
(coalescent-){\em histories}. Informally, describing samples via ordered 
types with multiplicities, such a history 
\[
\mathcal{H} = (H_{-\tau +1},\ldots,H_0)
\]
is the chronologically ordered sequence of the $\tau$ (different)
states in $\mathcal{T}^*$ one observes when tracing the coalescent
tree with superimposed mutations from the root to its leaves (see,
e.g., \cite[Steps~(i)--(vii) in Section~3]{Birkner2008}), where
$H_{-\tau+1} = ((0),(1))$ is the root and $H_0 = ({\bf t},{\bf n})$ is the
observed sample. 

A computationally 
efficient way of generating samples is described in \ref{sect:algo}, adapting 
\cite[Algorithm~1]{Birkner2008}. 
Let $\theta = (r,\Lambda)$ be the underlying `parameter' of our model (mutation rate and
Lambda-coalescent).
For a given sample size $n$, this algorithm 
constructs the path of a Markov chain with law $\P_{\theta,n}$ in $\mathcal{T}^*$ terminating 
in a sample 
of size $n$.
Its transition probabilities are
given by 
(as usual, denoting $|{\bf n}'|$ by $n'$)
\begin{equation}\label{likelihood_eq_markov_transition}
(\mathbf{t}',\mathbf{n}') \to (\mathbf{t}'',\mathbf{n}'') = 
\left\{\negthickspace\negmedspace\begin{array}{lll}
           \partial & \text{\rm w.p.\ } \frac{\tilde{q}^{(n)}_{n',n'}}{r_{n'}} & 
           \text{if $n'=n$},  \\
(\mathbf{t}',\mathbf{n}' + l\mathbf{e}_i\big) & \text{\rm w.p.\ } \frac{1}{r_{n'}}
\frac{n'_i}{n'}\tilde{q}^{(n)}_{n',n'+l} & \text{if $n' + l \le n$ ($l \geq 1$)},\\
\big(\mathfrak{a}_i(\mathbf{t}'),\mathbf{n}'\big) & \text{\rm w.p.\ } \frac{r}{r_{n'}} & 
\text{\rm if } n'_i = 1,\\
\big(\mathfrak{e}_{i,j}(\mathbf{t}'),\mathfrak{e}_j(\mathbf{n}'-\mathbf{e}_i)\big) & \text{\rm w.p.\ } \frac{r}{r_{n'}}\frac{1}{d+1}n'_i & \text{\rm if } n'_i > 1 \; (j=1,\dots,d+1) .\\
\end{array}\right.
\end{equation}
Here, $({\bf t'},{\bf n'})$ denotes the current state (with $d$ types), $i$ denotes the
type that is involved in the transition event with $1\leq i \leq d$,
and $$r_{n'} := n' r + \tilde{q}^{(n)}_{n'n'}.$$ The function
$\mathfrak{a}_i({\bf t'})$ attaches a mutation to the type $i$. The
operator $\mathfrak{e}_{i,j}({\bf t'})$ copies type $i$, attaches a
mutation and inserts the resulting type at position $j$ in the vector ${\bf t}$.  The
expression $\mathfrak{e}_j({\bf n'})$

denotes 
the vector
\[
\mathfrak{e}_j({\bf n'})=\mathfrak{e}_j(n'_1,\ldots,n'_d) := (n'_1,\ldots,n'_{j-1},1,n'_j,\dots,n'_d).
\]
Note that for given $({\bf t'},{\bf n}')$ and a type $i \in \{1, \dots, d\}$, it is in principle possible that 
the values $(\mathbf{t}'',\mathbf{n}'') = \big(\mathfrak{e}_{i,j}(\mathbf{t'}),\mathfrak{e}_j(\mathbf{n'}-\mathbf{e}_i)\big)$ 
are identical for several choices of $j$. The number of such $j$ equals 
\begin{equation*}
\text{nio}(\mathbf{t}',\mathbf{n}',i) := 
1+\#\Big\{ 1 \leq k \leq d : 
\begin{array}{l}
n_k=1 \;  \text{and type $k$ differs from type $i$} \\
\text{by exactly one unique mutation}
\end{array}
\Big\} 
\end{equation*}
(`nio' stands for `number of immediate offspring'). 
The $\tilde{q}^{(n)}_{k,l}$ are the transition 
rates of the time-reversed block counting process of the underlying Lambda-coalescent, 
see \ref{sect:algo}. Finally, $\partial$ denotes a cemetery state. Once reached, the sample
has been generated and is given by the penultimate state $({\bf t'},{\bf n'})$ (from which the cemetery state
had been reached).

It is straightforward to read off the transition probabilities 
from (\ref{likelihood_eq_markov_transition}), observe in particular that
\[
\mathbb{P}_{\theta,n}\big(H_\ell=(\mathbf{t}'',\mathbf{n}'') \mid H_{\ell-1}=(\mathbf{t}',\mathbf{n}')\big) = \frac{r}{r_{n'}}\frac{\text{nio}(\mathbf{t}',\mathbf{n}',i)}{d+1}n'_i
\]
if $(\mathbf{t}'',\mathbf{n}'')=
\big(\mathfrak{e}_{i,j}(\mathbf{t}'),\mathfrak{e}_j(\mathbf{n}'-\mathbf{e}_i)\big)$. 
(No such ambiguities arise for the transitions in the first three lines 
of (\ref{likelihood_eq_markov_transition}).)
\smallskip

We have 
\begin{equation}\label{importance_eq_sum_histories}
p({\bf t},{\bf n}) = \sum_{\mathcal{H}:H_0 = ({\bf t},{\bf n})} \P_{\theta,n}\{\mathcal{H}\},
\end{equation}
were the sum extends over all different histories (of possibly different lengths) 
with terminal state $H_0 = ({\bf t},{\bf n})$. 
Recursion~\eqref{ordered_ims_recursion} is 
just a way to enumerate all consistent histories and compute the sum in~\eqref{importance_eq_sum_histories}. An obvious `naive' approach to estimating 
$p({\bf t},{\bf n})$ is via direct Monte Carlo: Indeed, 
\begin{equation} \label{eq:MCestnaive}
\frac{1}{M} \sum_{i=1}^M \1_{\{(\H^{(i)})_0 =({\bf t},{\bf n})\}}, 
\end{equation}
where $\mathcal{H}^{(1)},\ldots,\mathcal{H}^{(M)}$ are independent
samples from $\P_{\theta,n}(\cdot)$, is an unbiased estimator of $p({\bf t},{\bf n})$. 
Unfortunately, even for small sample sizes, the variance of \eqref{eq:MCestnaive} 
is typically too high for \eqref{eq:MCestnaive} to be of practical value, 
since $p({\bf t},{\bf n})$ can easily be of the order $10^{-15}$ 
(see Table~\ref{tab_real_performance} for examples).

\subsection{Importance sampling and the optimal proposal distribution}
\label{ssn:opt}

Importance sampling is a well-known approach to reducing the variance of estimators of the form 
\eqref{eq:MCestnaive}. In the following, we will think of a \emph{fixed} 
sample size $n$ and will thus lighten notation by denoting $\mathbb{P}_\theta :=
\mathbb{P}_{\theta,n}$. 
Consider a \emph{proposal distribution} $\Q_\theta(\cdot)$ on the space of
histories satisfying 
\begin{equation}\label{importance_eq_support_condition}
\P_\theta\Big\vert_{\big\{ H_0 = ({\bf t}, {\bf n}) \big\}} \ll \Q_\theta,
\end{equation}
and use it to rewrite equation~\eqref{importance_eq_sum_histories} as
\begin{equation}\label{eq_proposal_exact}
p({\bf t},{\bf n})  = \sum_{\mathcal{H}:H_0 = ({\bf t},{\bf n})} \frac{\P_\theta\big(\mathcal{H}\big)}{\Q_\theta\big(\mathcal{H}\big)}\Q_\theta(\mathcal{H}).
\end{equation}
This shows that 
\begin{equation}\label{eq_proposal_estimator}
\frac{1}{M} \sum_{i=1}^M \1_{\{(\H^{(i)})_0 =({\bf t},{\bf n})\}} \frac{d\P_\theta} {d\Q_\theta}(\mathcal{H}^{(i)}) = \frac{1}{M}\sum_{i=1}^M w(\mathcal{H}^{(i)}),
\end{equation}
where $\mathcal{H}^{(1)},\ldots,\mathcal{H}^{(M)}$ are independent
samples from $\Q_\theta(\cdot)$, is also an unbiased (and consistent as $M\to\infty$) 
estimator of $p({\bf t},{\bf n})$. Denote by
\begin{equation}\label{eq:weights}
w(\mathcal{H}^{(i)}) :=  \begin{cases}
\frac{d\P_\theta} {d\Q_\theta}(\mathcal{H}^{(i)}) &\text{if}\; \big(\H^{(i)}\big)_0 =({\bf t},{\bf n})\\
0 & \text{otherwise}
\end{cases}
\end{equation}
the \emph{importance sampling weight} or \emph{IS weight} of the history
$\mathcal{H}^{(i)}$. Our goal now is to
derive proposal distributions
for which the variance of the
estimator~\eqref{eq_proposal_estimator} is small. 

The optimal proposal distribution $\Q^*_\theta$, for which this
variance vanishes, is given by
\begin{equation}\label{importance_eq_observation_optimal}
\Q^*_\theta(\mathcal{H}) = \P_\theta\big\{\mathcal{H} \big\vert H_0 = ({\bf t},{\bf n})\big\},
\end{equation}
the conditional distribution on the histories given the observed data.
Under $\Q^*_\theta$, the importance weight $w(\H)$ equals $p({\bf
t},{\bf n})$ for all histories $\H$ compatible with the data. 
Hence, the (consistent) 
estimator~\eqref{eq_proposal_estimator} becomes deterministic, and its 
variance is thus zero.

{
Since for a given $\mathcal{H}$, $\P_\theta(\mathcal{H})$ is 
straightforward to evaluate, we}
see from \eqref{importance_eq_observation_optimal} that explicit knowledge of 
the optimal proposal distribution is equivalent to knowing $p({\bf
t},{\bf n})$, so not surprisingly $\Q^*_\theta$  cannot be
given explicitly in general. 
{\coleins This also applies to the Kingman case except 
for so-called parent-independent mutation models, as observed in 
\cite{Stephens2000}.} 

It is natural to consider proposal distributions $\Q_\theta$ under which the time-reversal 
of the history is a Markov chain starting from the observed configuration 
$({\bf t},{\bf n})$ and ending at the root $((0), (1))$, thus guaranteeing 
that the weights in \eqref{eq:weights} are strictly positive. 
Indeed, by elementary properties of Markov chains, $\Q^*_\theta$ has this 
property.

Let 
\begin{equation}
\label{eq:Greenfunction}
G^{(n)}({\bf t}',{\bf n}') = \E_{\theta,n}\Big[ \sum_{\ell=-\tau+1}^0 \1_{\{({\bf t}',{\bf n}')\}}(H_\ell) 
\Big]
\end{equation} 
denote the associated Green function, that is the expected
time the Markov chain with transition probabilities
\eqref{likelihood_eq_markov_transition} (for samples of size $n$) spends in the state
$({\bf t}',{\bf n}')$. Note that by the special structure of the transitions in 
\eqref{likelihood_eq_markov_transition} which increase the sample complexity in 
each step, we in fact have {\coleins (for $n \geq |{\bf n}'|$)}
\begin{equation} 
\label{eq:relGandhitting}
G^{(n)}({\bf t}',{\bf n}') = \P_{\theta,n}\big\{ \exists \, \ell \, : H_\ell = ({\bf t}',{\bf n}') \big\}.
\end{equation} 

\begin{lemma} \label{lem:Gandp}
\coleins 
For $({\bf t},{\bf n})$ with $|{\bf n}|=n$ we have 
\begin{equation}
\label{eq:relpandG1}
p({\bf t},{\bf n}) = G^{(n)}({\bf t},{\bf n}) \frac{\tilde{q}^{(n)}_n}{nr+\tilde{q}^{(n)}_n}. 
\end{equation}
More generally, for $({\bf t}',{\bf n}')$ with $|{\bf n}'|=n' < n$, 
\begin{equation}
\label{eq:relpandG2}
p({\bf t}',{\bf n}') = \frac{G^{(n)}({\bf t}',{\bf n}')}{g(n, n') (n'r+\tilde{q}^{(n')}_{n'})}, 
\end{equation}
where $g(n, n')$ is the Green function of the block counting process of the 
underlying Lambda-coalescent, see \eqref{blue}. 
\end{lemma}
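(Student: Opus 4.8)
\emph{Plan of proof.}\ The plan is to read \eqref{eq:relpandG1} directly off the history representation \eqref{importance_eq_sum_histories}, and to derive \eqref{eq:relpandG2} from \eqref{eq:relpandG1} (used at sample size $n'$) together with a strong Markov decomposition of the history chain at the first time its total multiplicity equals $n'$. Throughout I use that every transition of \eqref{likelihood_eq_markov_transition} strictly increases the sample complexity — its second line raises the total multiplicity $|\mathbf{n}|$, its third and fourth lines raise the number of segregating sites by one — so that the chain is transient, visits every state at most once, and $G^{(n)}$ is at once the expected occupation time and the hitting probability of a state, cf.\ \eqref{eq:relGandhitting}.

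For \eqref{eq:relpandG1} I would start from $p(\mathbf{t},\mathbf{n})=\sum_{\mathcal{H}:H_0=(\mathbf{t},\mathbf{n})}\P_{\theta,n}\{\mathcal{H}\}$. When $|\mathbf{n}|=n$, a history $\mathcal{H}$ with terminal state $(\mathbf{t},\mathbf{n})$ is exactly a path of the chain from the root $((0),(1))$ to $(\mathbf{t},\mathbf{n})$, the weight $\P_{\theta,n}\{\mathcal{H}\}$ including the subsequent single jump into the cemetery $\partial$; by the first line of \eqref{likelihood_eq_markov_transition} that jump has probability $\tilde q^{(n)}_n/(nr+\tilde q^{(n)}_n)$, which depends on $n$ only. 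Pulling this factor out of the sum and recognising the remaining $\sum_{\mathcal{H}:H_0=(\mathbf{t},\mathbf{n})}\P_{\theta,n}\{\text{path }\mathcal{H}\}$ as $\P_{\theta,n}\{\exists\,\ell:H_\ell=(\mathbf{t},\mathbf{n})\}=G^{(n)}(\mathbf{t},\mathbf{n})$ gives \eqref{eq:relpandG1}.

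For \eqref{eq:relpandG2} with $n'=|\mathbf{n}'|<n$, I would first apply \eqref{eq:relpandG1} at sample size $n'$ to write $p(\mathbf{t}',\mathbf{n}')=G^{(n')}(\mathbf{t}',\mathbf{n}')\,\tilde q^{(n')}_{n'}/(n'r+\tilde q^{(n')}_{n'})$, so that the assertion becomes equivalent to
\[
G^{(n)}(\mathbf{t}',\mathbf{n}')=g(n,n')\,\tilde q^{(n')}_{n'}\,G^{(n')}(\mathbf{t}',\mathbf{n}').
\]
Along the size-$n$ chain the total-multiplicity process $(|\mathbf{n}_\ell|)_\ell$ is itself a Markov chain — summing the second line of \eqref{likelihood_eq_markov_transition} over the type index, from multiplicity $m$ it moves to $m+l$ with probability $\tilde q^{(n)}_{m,m+l}/(mr+\tilde q^{(n)}_{mm})$ and otherwise stays put — and the set of multiplicities it visits coincides with the set of block-counts visited by the $\Lambda$-coalescent block-counting process started from $n$ blocks (see \ref{sect:algo}). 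Hence the probability that the size-$n$ chain ever reaches multiplicity $n'$ is the probability that this block-counting process passes through $n'$, which by the definition of its Green function $g$ (see \eqref{blue}) equals $g(n,n')\,\tilde q^{(n')}_{n'}$. I would then decompose $\{\exists\,\ell:H_\ell=(\mathbf{t}',\mathbf{n}')\}$ at the first time the chain reaches multiplicity $n'$: conditionally on reaching that level, the part of the chain run while at multiplicity $n'$ is distributed as the part of the size-$n'$ history chain run at its maximal multiplicity $n'$, and the first state of multiplicity $n'$ has the same law under $\P_{\theta,n}$ as under $\P_{\theta,n'}$. Both points follow from \eqref{likelihood_eq_markov_transition}: the relative weights of the mutation and split moves do not involve the target size, while the count-process of the size-$n$ chain, conditioned to reach multiplicity $n'$, agrees with that of the size-$n'$ chain up to its first visit to $n'$ — the strong Markov property of the $\Lambda$-coalescent block-counting process transported through its time-reversal. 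Since $(\mathbf{t}',\mathbf{n}')$ has multiplicity $n'$, multiplying the hitting probability of level $n'$ by $G^{(n')}(\mathbf{t}',\mathbf{n}')$ gives the displayed identity, hence \eqref{eq:relpandG2}.

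The bookkeeping in the first two paragraphs and the check that the total-multiplicity process is Markov are routine; the step I would carry out most carefully is the last one — that conditioning the size-$n$ history chain on reaching multiplicity $n'$ turns its behaviour below and at that level into exactly that of the size-$n'$ history chain, i.e.\ that the Doob $h$-transform relating the two families of reversed block-counting rates $\{\tilde q^{(n)}_{m,\cdot}\}$ and $\{\tilde q^{(n')}_{m,\cdot}\}$ is precisely compensated by the conditioning, together with the identity $\tilde q^{(n)}_{n'n'}=\tilde q^{(n')}_{n'n'}$ for the holding rate at level $n'$.
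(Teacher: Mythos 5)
Your proposal is correct and follows essentially the same route as the paper: \eqref{eq:relpandG1} is obtained by factoring the termination probability $\tilde q^{(n)}_n/(nr+\tilde q^{(n)}_n)$ out of the history sum \eqref{importance_eq_sum_histories} together with \eqref{eq:relGandhitting}, and \eqref{eq:relpandG2} reduces to the identity $G^{(n)}({\bf t}',{\bf n}') = \tfrac{g(n,n')}{g(n',n')}\,G^{(n')}({\bf t}',{\bf n}')$, which the paper also derives from \eqref{eq:reversedjumprate1} and \eqref{eq:eq:comparereversedhittingprob}. Your first-passage/conditioning phrasing is just a repackaging of the paper's direct comparison of path probabilities (the telescoping of the $g$-ratios in \eqref{eq:comparereversedpathprob1}), so the two arguments coincide in substance.
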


\begin{proof}
 \eqref{eq:relpandG1} follows from \eqref{eq:relGandhitting} and the
 fact that under $\P_{\theta,n}$ when the chain is currently in a
 state with sample size $n$ it terminates with probability 
$(\tilde{q}^{(n)}_n)/(nr+\tilde{q}^{(n)}_n)$, see the second case in Step~2 of 
Algorithm~1 in \ref{sect:algo}.

We see from \eqref{eq:reversedjumprate1} and \eqref{likelihood_eq_markov_transition} 
that the probabilities for transitions between states with at most $n'$ samples 
agree under $\P_{\theta,n}$ and $\P_{\theta,n'}$ except for terms involving 
$q^{(n')}_{\cdot,\cdot}$ resp.\ $q^{(n)}_{\cdot,\cdot}$. Using 
\eqref{eq:eq:comparereversedhittingprob} on the product of these terms yields 
\begin{equation} 
\P_{\theta,n}\big\{ \exists \, \ell \, : H_\ell = ({\bf t}',{\bf n}') \big\} 
= \frac{g(n,n')}{g(n',n')} 
\P_{\theta,n'}\big\{ \exists \, \ell \, : H_\ell = ({\bf t}',{\bf n}') \big\}.
\end{equation}
Using \eqref{eq:relGandhitting}, \eqref{eq:relpandG1} and 
observing $g(n',n')=1/(-q_{n'n'})=1/\tilde{q}^{(n')}_{n'}$, we obtain
\begin{align*}
p({\bf t}',{\bf n}') &= \P_{\theta,n'} \big\{ \exists \, \ell \, : H_\ell = ({\bf t}',{\bf n}') \big\} 
\frac{\tilde{q}^{(n')}_{n'}}{n'r+\tilde{q}^{(n')}_{n'}} \\
     &= G^{(n')}({\bf t}',{\bf n}') \frac{\tilde{q}^{(n')}_{n'}}{n'r+\tilde{q}^{(n')}_{n'}} \\
     &= G^{(n)}({\bf t}',{\bf n}') \frac{g(n',n')}{g(n,n')} \frac{\tilde{q}^{(n')}_{n'}}{n'r+\tilde{q}^{(n')}_{n'}} \\
     &= \frac{G^{(n)}({\bf t}',{\bf n}')}{g(n, n') (n'r+\tilde{q}^{(n')}_{n'})},
\end{align*}
which is \eqref{eq:relpandG2}.

\end{proof}

\begin{lemma}\label{importance_lemma_optimal_proposal_transition} 
The time-reversed history 
$\H$ under $\Q_\theta^*$ is a Markov chain started in
$H_0 = ({\bf t},{\bf n})$ 
with transition probabilities given by
\begin{align}\label{importance_eq_optimal_proposal_transition}
\Q_\theta^* \big( & H_{\ell-1}=(\mathbf{t}',\mathbf{n}')  \mid H_{\ell}=(\mathbf{t}'',\mathbf{n}'') \big) \notag \\ 
& = \,  \frac{G^{(n)}({\bf t}',{\bf n}')}{G^{(n)}({\bf t}'',{\bf n}'')}
\mathbb{P}_{\theta,n}\big(H_\ell=(\mathbf{t}'',\mathbf{n}'') \mid H_{\ell-1}=(\mathbf{t}',\mathbf{n}')\big), 
\end{align}
where the transition matrix under $\mathbb{P}_{\theta,n}$ is described in 
Section~\ref{sseq:iii} and $n=|{\bf n}|$. 
The chain is absorbed in the root $((0), (1))$. 
The transition probability in 
\eqref{importance_eq_optimal_proposal_transition} is independent of 
$n$ (provided $n \geq |{\bf n}''|$).  
\end{lemma}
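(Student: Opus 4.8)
The plan is to verify the three assertions in Lemma~\ref{importance_lemma_optimal_proposal_transition} in turn: that the time-reversed history under $\Q_\theta^*$ is Markov started at $({\bf t},{\bf n})$ and absorbed at the root, that its transition probabilities have the stated Doob-type $h$-transform form, and that these probabilities do not depend on $n$. The Markov property and the location of the endpoints are essentially inherited from $\P_{\theta,n}$: since $\Q_\theta^*(\H)=\P_\theta\{\H\mid H_0=({\bf t},{\bf n})\}$ by \eqref{importance_eq_observation_optimal}, and since under $\P_{\theta,n}$ the forward chain $H_{-\tau+1},\dots,H_0$ is Markov with $H_{-\tau+1}=((0),(1))$, conditioning on the terminal value $H_0$ turns the time-reversed sequence into a (time-inhomogeneous-looking, but see below) Markov chain; the complexity-monotonicity of the transitions in \eqref{likelihood_eq_markov_transition} guarantees each state is visited at most once, so there is no subtlety about multiplicities and the chain genuinely terminates at the root.

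For the transition formula, the key step is a direct Bayes/Markov computation. Using \eqref{eq:relGandhitting} — which says $G^{(n)} = \P_{\theta,n}\{\exists\,\ell: H_\ell = \cdot\}$ precisely because complexity strictly increases — one writes
\begin{equation*}
\Q_\theta^*\big(H_{\ell-1}=({\bf t}',{\bf n}')\mid H_\ell=({\bf t}'',{\bf n}'')\big)
= \frac{\P_{\theta,n}\{H_{\ell-1}=({\bf t}',{\bf n}'),\,H_\ell=({\bf t}'',{\bf n}'')\}}{\P_{\theta,n}\{H_\ell=({\bf t}'',{\bf n}'')\}},
\end{equation*}
where the conditioning on $H_0=({\bf t},{\bf n})$ has been absorbed by noting that, given $H_\ell=({\bf t}'',{\bf n}'')$ (or $H_{\ell-1}=({\bf t}',{\bf n}')$), reaching $({\bf t}',{\bf n}')$ or $({\bf t}'',{\bf n}'')$ at all is equivalent to reaching it at the unique possible time, and the future evolution to $({\bf t},{\bf n})$ factors out identically in numerator and denominator by the Markov property — this cancellation is really what makes the expression depend only on the Green functions at the two states and the one-step transition between them. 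Expanding numerator and denominator via first-entrance decompositions gives $G^{(n)}({\bf t}',{\bf n}')\,\P_{\theta,n}(H_\ell=({\bf t}'',{\bf n}'')\mid H_{\ell-1}=({\bf t}',{\bf n}'))$ over $G^{(n)}({\bf t}'',{\bf n}'')$, which is \eqref{importance_eq_optimal_proposal_transition}. (One should record here that the one-step $\P_{\theta,n}$-probability in the last line of \eqref{likelihood_eq_markov_transition} must be taken with the $\mathrm{nio}$-correction as displayed just above \eqref{importance_eq_sum_histories}, since several $j$ can give the same state.)

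Finally, $n$-independence follows from Lemma~\ref{lem:Gandp}: by \eqref{eq:relpandG2}, for any state $({\bf t}',{\bf n}')$ with $|{\bf n}'|=n'\le n$ we have $G^{(n)}({\bf t}',{\bf n}') = p({\bf t}',{\bf n}')\,g(n,n')\,(n'r+\tilde q^{(n')}_{n'})$, so in the ratio $G^{(n)}({\bf t}',{\bf n}')/G^{(n)}({\bf t}'',{\bf n}'')$ the $n$-dependence sits entirely in the factor $g(n,n')/g(n,n'')$; since the block-counting Green function factorises multiplicatively along the nested block counts (this is exactly the content of \eqref{eq:eq:comparereversedhittingprob} used in the proof of Lemma~\ref{lem:Gandp}), one gets $g(n,n')/g(n,n'') = g(n'',n')/g(n'',n'')$ for $n'' \le n$, which no longer involves $n$; moreover $n''=n'$ or $n''=n'+1$ for a genuine one-step transition and the one-step $\P_{\theta,n}$-probabilities in \eqref{likelihood_eq_markov_transition} already have no $n$-dependence except through the termination case $n'=n$, which does not occur here since $n'' \le |{\bf n}''| \le n$ can be taken strictly below $n$ is not needed — the mutation/copy transitions used are manifestly $n$-free. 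Hence the whole expression depends only on $({\bf t}',{\bf n}')$ and $({\bf t}'',{\bf n}'')$, as claimed.

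I expect the main obstacle to be the bookkeeping in the middle step: making the conditioning-on-$H_0$ cancellation rigorous requires care that every intermediate state is hit at most once (so that "hitting ever" and "hitting at step $\ell$" coincide) and that the strong Markov property at the hitting time of $({\bf t}',{\bf n}')$ is applied correctly; once that is set up, the algebra with Green functions and Lemma~\ref{lem:Gandp} is routine.
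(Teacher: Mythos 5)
Your derivation of \eqref{importance_eq_optimal_proposal_transition} itself is correct and is essentially a by-hand version of what the paper disposes of by citing Nagasawa's formula: the identification of ``hit ever'' with ``hit at the unique admissible time'' (via strict complexity increase), the cancellation of the common future factor $\P_b(\text{terminate at }({\bf t},{\bf n}))$ in numerator and denominator, and the first-entrance decomposition giving $G^{(n)}({\bf t}',{\bf n}')\,\P_{\theta,n}\big(({\bf t}',{\bf n}')\to({\bf t}'',{\bf n}'')\big)/G^{(n)}({\bf t}'',{\bf n}'')$ are all sound, and your remark about the $\mathrm{nio}$-correction is apposite.

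The argument for the last assertion ($n$-independence) has a genuine gap, however. You claim both that $g(n,n')/g(n,n'')$ is free of $n$ and that the one-step $\P_{\theta,n}$-probabilities in \eqref{likelihood_eq_markov_transition} are free of $n$. Neither holds for the sample-size-increasing transitions $(\mathbf{t}',\mathbf{n}')\to(\mathbf{t}',\mathbf{n}'+l\mathbf{e}_i)$ (which occur for every $l\ge 1$, not only $l=1$, in the $\Lambda$-setting): the forward probability contains $\tilde q^{(n)}_{n',n'+l}=\frac{g(n,n'+l)}{g(n,n')}q_{n'+l,n'}$, which manifestly depends on $n$, and the identity $g(n,n')/g(n,n'')=g(n'',n')/g(n'',n'')$ you extract from \eqref{eq:eq:comparereversedhittingprob} is equivalent to $\P_n(\text{hit }n')=\P_n(\text{hit }n'')\,\P_{n''}(\text{hit }n')$ for the block counting process, which fails whenever the process can jump over $n''$ on its way from $n$ to $n'$ (e.g.\ a direct $5\to3$ merger with $n''=4$). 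What is true — and what the paper proves in Remark~\ref{rem:Gandp} — is that the two $n$-dependences cancel in the product:
\begin{equation*}
\frac{g(n,n')}{g(n,n'')}\,\tilde q^{(n)}_{n',n''} \;=\; q_{n'',n'}
\end{equation*}
by \eqref{eq:reversedjumprates}, while $r_{n'}=n'r+\tilde q^{(n)}_{n'}$ is $n$-free by \eqref{eq:reversedjumprate1}; for the mutation transitions ($n'=n''$) your reasoning does go through. So the conclusion is right, but only because two false intermediate claims compensate each other; the proof of the third assertion needs to be replaced by the cancellation argument above (equivalently, by invoking Lemma~\ref{lem:Gandp} together with \eqref{eq:reversedjumprates} as in Remark~\ref{rem:Gandp}).
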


\begin{proof}[Sketch of proof]
The optimal proposal distribution is the distribution of histories simulated
with Algorithm~1 in \ref{sect:algo} with transition \eqref{likelihood_eq_markov_transition} 
conditioned on observing $({\bf t},{\bf n})$ as the penultimate state before 
hitting the `cemetery' $\partial$. 
Nagasawa's formula can thus be applied to obtain the transition
probabilities~\eqref{importance_eq_optimal_proposal_transition} of the
time-reversed chain 
(see e.g.\ \cite{RW87},  Sect.~III.42).
\end{proof}

The fact that \eqref{importance_eq_optimal_proposal_transition} does 
not depend on the target sample size $n$ stems from the consistency 
properties of Lambda-coalescents and is made explicit in 
the following Remark~\ref{rem:Gandp}.

\begin{remark}\label{rem:Gandp}
By Lemma~\ref{lem:Gandp},
we may express the transition probabilities 
of the time-reversed history under $\Q_\theta^*$ explicitly via 
$p$ as follows 
(with notation as above):
\begin{equation}
\label{importance_eq_optimal_proposal_transition_quot_p}
\begin{split}
\Q_\theta^* &\big(H_{\ell-1}=(\mathbf{t}',\mathbf{n}') \mid H_{\ell}=(\mathbf{t}'',\mathbf{n}'') \big)\\
&= 
       \frac{p({\bf t}',{\bf n}')}{p({\bf t}'',{\bf n}'')} 
       \begin{cases}
\frac{1}{r_{n''}}\frac{n''_i-l}{n''-l}{q}_{n'',n''-l} & \text{if } (\mathbf{t}',\mathbf{n}') = \big(\mathbf{t}'',\mathbf{n}'' -l\mathbf{e}_i\big), \\
\frac{r}{r_{n'}} & \text{if } (\mathbf{t}',\mathbf{n}') = (\mathfrak{s}_i(\mathbf{t}''),\mathbf{n}''), \\
\frac{r}{r_{n'}}\frac{\text{nio}(\mathbf{t}',\mathbf{n}',i)}{d} (n''_j+1) & \text{if }(\mathbf{t}',\mathbf{n}') = \big(\mathfrak{r}_i(\mathbf{t}''),\mathfrak{r}_i(\mathbf{n}''+\mathbf{e}_j)\big), \\
0 & \text{otherwise}.
\end{cases}
\end{split}
\end{equation}
\end{remark}
\begin{proof} For the last three lines in the right-hand side of 
\eqref{importance_eq_optimal_proposal_transition_quot_p} note that 
by \eqref{eq:relpandG2}, 
${G^{(n)}({\bf t}',{\bf n}')}/{G^{(n)}({\bf t}'',{\bf n}'')} 
= p({\bf t}',{\bf n}')/p({\bf t}'',{\bf n}'')$ if $|{\bf n}'|=|{\bf n}''|$, 
for the first line observe 
\begin{align*}
&\frac{g(n, n')(n'r+\tilde{q}^{(n')}_{n'})}{
g(n, n'')(n''r+\tilde{q}^{(n'')}_{n''})} 
\frac{1}{r_{n'}}\frac{n''_i-l}{n''-l}\tilde{q}^{(n)}_{n''-l,n''} \\ 
& \hspace{2em} = \frac{1}{r_{n''}} \frac{n''_i-l}{n''-l} 
 \frac{g(n, n') \tilde{q}^{(n)}_{n''-l,n''}}{g(n, n'')} 
= \frac{n''_i-l}{n''-l} \frac{{q}_{n'',n''-l}}{r_{n''}}
\end{align*} 
if $n'=n''-l$ (see \eqref{eq:reversedjumprates}).
\end{proof}

\begin{remark}
\label{rem:SDequiv} 
Lemma~\ref{importance_lemma_optimal_proposal_transition} 
can be seen as 
a starting point for importance sampling: 
Any (quite possibly heuristic) approximation of 
${G^{(n)}({\bf t}'',{\bf n}'')}/{G^{(n)}({\bf t}',{\bf n}')}$ 
leads via (\ref{importance_eq_optimal_proposal_transition}) to an 
approximation of $\Q_\theta^*$ which can be used as 
a proposal  distribution. This is the `$\Lambda$-coalescent equivalent' 
of Stephens \& Donnelly's \cite[Thm.~1]{Stephens2000} observation that the 
optimal distribution in the Kingman context can be characterised in terms 
of the conditional distribution of an $(n+1)$-st sample given 
the types of $n$ samples.
\end{remark}

\subsection{Importance Sampling Schemes}
\label{importance_sec_proposal_schemes}

We have shown that the optimal proposal distribution $\Q_\theta^*(\cdot)$ is a
Markov chain and derived expressions for the transition probabilities in terms
of the Green function
(\ref{eq:Greenfunction}). 
Since recursive evaluation of the
Green function is equivalent to evaluating the likelihood, this is 
more of theoretical than direct practical value.

Still, in the remaining sections we
will present several  
proposal distributions based on Markov chains that approximate
the optimal proposal distribution in reasonable ways so that the variance of
the estimator~\eqref{eq_proposal_estimator} is small. 
We discuss separately situations 
in which the proposal distribution does not take any information about `genealogical distance' between types (which is in principle provided by the
IMS model) into account, and situations in which at least some of this information is retained.

\subsubsection{Importance sampling schemes without regard of `genealogical distance' between types}
\label{importance_sec_griffiths_tavare}
\ \\

{
\noindent
{\em Griffiths \& Tavar\'e's scheme for Lambda-coalescents.}
Griffiths and Tavar\'e in \cite{Griffiths1994} introduced a Monte
Carlo method to estimate the likelihoods of mutation rates under
Kingman's coalescent. This method was generalised in \cite{Birkner2008} to the
multiple merger case
and can be interpreted, as observed by Felsenstein et.~al.~\cite{Felsenstein1999}, also as an importance sampling scheme.}

Indeed, it is easy to derive a proposal distribution from recursion~\eqref{ordered_ims_recursion}, recovering the scheme derived in \cite{Birkner2008}.
For a given configuration $(\mathbf{t}, {\bf n})$ 
with complexity greater than 1 (i.e.\ excluding the root), define (with the usual convention $n=|{\bf n}|, r_n=\lambda_n + nr$)
\begin{align}
\label{eq:defftnallg} 
f_\theta(\mathbf{t},{\bf n}) &:= 
     \frac{1}{r_n} 
                 \Bigg(\sum_{i:n_i\geq 2} \sum_{k=2}^{n_i} {n \choose k} \lambda_{n,k}\frac{n_i-k+1}{n-k+1} 
             + \sum_{i:n_i=1, x_{i0} \text{unique}, \atop \mathfrak{s}({\bf x}_i) \neq {\bf x}_j \forall j} r \notag \\
           &\quad \quad \quad \quad + \frac rd \sum_{i:n_i=1,\atop x_{i0} \text{unique}} 
                 \sum_{j:\mathfrak{s}({\bf x}_i) = {\bf x}_j}  (n_j + 1) \Bigg),
\end{align} 
and put $f_\theta((0), (1)) :=1$ for the root.

\begin{definition}[Proposal distribution $\QGT$]
\label{importance_def_griffiths_tavare}
We denote by $\QGT$ the law of a Markov chain on the space of histories with transitions, given a state
$({\bf t},{\bf n})$, as follows:
\begin{equation}\label{eq_griffiths_tavare_transistion_rates}
({\bf t} ,{\bf n}) \to
\begin{cases}
                      \big(\mathfrak{s}_i({\bf t}),{\bf n}\big)
                                      &\text{w.p. }\frac{r}{r_n f_\theta({\bf t},{\bf n})}\,\,
                                   \text{\scriptsize if $\,n_i=1, t_{i,0}$ unique $\mathfrak{s}_i({\bf t}_i)) \neq {\bf t}_j \forall j$},\\
\big(\mathfrak{r}_i({\bf t}),\mathfrak{r}_i({\bf n} + {\bf e}_j)\big) &\text{w.p. }\frac {r(n_j + 1)}{r_n 
                                      f_\theta({\bf t},{\bf n})}\,\,\text{\scriptsize if $\,n_i=1, t_{i,0}$ unique, $\mathfrak{s}({\bf t}_i) 
                                                    = {\bf t}_j$},\\
\big({\bf t},{\bf n} - (k-1){\bf e}_i\big) &\text{w.p. } \frac{1}{r_n f_\theta({\bf t},{\bf n})} {n \choose k}
                                        \lambda_{n,k}\frac{n_i-k+1}{n-k+1}\,\,\text{\scriptsize if $\,2\leq k\leq n_i$}.
                     \end{cases}
\end{equation}
\end{definition}
To see why this yields a suitable Monte Carlo estimate, let
$\tau$ denote the random number of steps that our Markov chain performs until it hits the root configuration. Then, 
a simple calculation shows (see e.g.~\cite[Lemma~6]{Birkner2008}) that 
we may write 
\begin{equation}\label{eq_griffiths_tavare_monte_carlo}
p({\bf t},{\bf n}) = \mathbb{E}_{({\bf t},{\bf n})} \prod^{\tau-1}_{i=0} f_\theta(H_{i}),
\end{equation}
where the expectation is taken with
respect to
$\QGT$ started in $({\bf t},{\bf n})$.

\begin{remark}
This Monte Carlo method can be interpreted as an importance sampling scheme by choosing
the proposal weights $w(\H)$ according to
\[
w(\H) = \prod^{\tau-1}_{i=0} f_\theta(H_{i}) 
      = \frac{d\mathbb{P}_\theta}{d\QGT}(\H).
\]
Note that this method is a special case of general Monte Carlo
methods for systems of linear equations with non-negative coefficients.
It is therefore referred to as the `canonical candidate' by 
\cite{Griffiths1997a} in the Kingman case and will serve us as 
a benchmark in Section~\ref{importance_sec_performance}.
\end{remark}

\noindent
{\em Stephens \& Donnelly's scheme for Lambda-coalescents.}
\label{importance_sec_stephens_donnelly}
Stephens and Donnelly\ \cite{Stephens2000} motivate and study a proposal distribution 
in a general finitely many alleles model under
Kingman's coalescent.
One can efficiently sample from their proposal
distribution by choosing an individual from the current sample 
uniformly at
random and then decide on the transition for the type of this individual. 
This is indeed optimal in the case of parent-independent mutations 
(see \cite{Stephens2000}, Prop.~1). 
This procedure is adapted by Stephens and Donnelly to the infinitely many sites
model in their Section~5.5. 
Here, not all types are 
eligible for a transition -- only those whose multiplicity is at least two (which will then merge)
or whose outmost mutation, say $x_{k0}$, is unique. 
Denote the number of eligible individuals of a configuration $({\bf t},{\bf n})$ by
\[
z({\bf t},{\bf n}) := \sum_{i: n_i=1 \text{ and } x_{i0} \text{ unique } \atop \text{or}\,n_i 
\geq 2} n_i.
\]
Under Kingman's coalescent, choosing (uniformly) an eligible individual is equivalent to proposing a transition step. Either a singleton is chosen, where the only possible most
recent event is the removal of the outmost mutation, or an individual with a
type that occurs at least twice in the sample is chosen leading to a binary merger. 

To adapt this approach to the Lambda-case note that when choosing an eligible singleton
type the proposed step is unambiguous as in the previous case. However, the
proposal needs additional information if a type with multiplicity greater than two is
chosen, since then typically various multiple mergers of ancestral lines can occur. A natural approach
to this problem is to choose the size of a merger with a probability
proportional to the rates of the block counting process of the
$\Lambda$-coalescent (see e.g.\ \cite[Section~7]{Birkner2008}). Based on this
idea we introduce the following proposal distribution.
\begin{definition}[Proposal distribution $\QSD$]
The proposal distribution 
$\Q_\theta^{\Lambda-\text{SD}}$ is the distribution of the Markov chain on the space of histories 
performing the transitions
\begin{equation}
({\bf t},{\bf n}) \to \begin{cases}
\big(\mathfrak{s}_k({\bf t}),{\bf n}\big) &\text{w.p. }\frac{1}{z({\bf t},{\bf n})}\,
\text{\scriptsize if $k:n_k=1, x_{k0}$ unique $\mathfrak{s}_k({\bf x}_k)) \neq {\bf x}_j \forall j$}\\
\big(\mathfrak{r}_k({\bf t},\mathfrak{r}_k({\bf n} + {\bf e}_j))
&\text{w.p. }\frac{1}{z({\bf t},{\bf n})}\,\text{\scriptsize if $k:n_k=1, x_{k0}$ unique}\\
\big({\bf t},{\bf n} - (k-1){\bf e}_i\big) &\text{w.p. }
\frac{p(k)n_i}{z({\bf t},{\bf n})}\,\text{\scriptsize if $2\leq k\leq n_i$},
\end{cases}
\end{equation}
where 
\begin{equation}
p(k) = p_i^{({\bf t},{\bf n})}(k) = \frac{q_{n,n-k+1}}{\sum_{l=2}^{n_i} q_{n,n-l+1}}
\end{equation}
for $n_i \geq 2$ is the probability derived from the block counting process (see \ref{sect:algo})
that in the most recent merging event $k$ lineages coalesce.
\end{definition}

\begin{remark}[On optimality in the infinite alleles model]
Hobolth et.\ al.\ showed in \cite{Hobolth2008} that the proposal distribution
of Stephens and Donnelly in the Kingman case 
is the optimal proposal distribution in the infinitely many alleles  model (IMA),
which is the prime example of a parent-independent mutation model.
A crucial step in the proof is Ewens' sampling formula, which provides an explicit
expression for the probability of a sample in the IMA. Since such an explicit formula 
is (at present) not available in the IMA for Lambda-coalescents, we may express the optimal 
proposal distribution only implicitly via a recursion of M\"ohle \cite{Moehle2006}.
Indeed, let ${\bf c}=(c_1, \dots, c_k, 0, \dots) \in (\N_0)^\infty$ denote an allelic partition of a sample in the IMA,
that is, $c_i$ is the number of types that occur $i$ times in the sample.
Then, the sampling probability $q({\bf c})$ satisfies
\begin{equation}
\label{eq.moehlerec}
q({\bf c}) = \frac{nr}{r_n} q({\bf c-e_1}) + \sum_{i=1}^{n-1} \frac{{n \choose i+1}\lambda_{n, i+1}}{r_n} \sum_{j=1}^{n-1} \frac{j(c_j+1)}{n-i}
q({\bf c+e_j-e_{i+j}})
\end{equation}
with $n=\sum_i i c_i$. 
The boundary condition is $q((1, 0, \dots)) =1$ and we set $q({\bf c})=0$ if any entry in ${\bf c}$ is negative.
Further, let $\phi: ({\bf t},{\bf n}) \mapsto {\bf c}$ be the function which maps a sample $({\bf t},{\bf n})$ in the infinitely many sites model, which we
think of being generated by Algorithm~1 (the `$\Lambda$-Ethier Griffiths Urn'), to the corresponding allelic partition ${\bf c}$
in the infinite
alleles model (i.e.~where $c_i=\#\{\mbox{types $k$ with $n_k=i$} \}, i=1, 2, \dots$).
Let $P^{\Lambda-\mbox{\tiny EGU}}$ be the image measure of the sample distribution under $\phi$.
Then, using conditional probabilities, the optimal sampling distribution $\Q^{*, \mbox{\tiny IMA}}_\theta$ in the infinite alleles model has transitions
\begin{equation}
\label{eq:trans1} 
\Q^{*, \mbox{\tiny IMA}}_\theta({\bf c}' | {\bf c}) = P^{\Lambda-\mbox{\tiny EGU}}({\bf c}| {\bf c}') \frac{q({\bf c}')}{q({\bf c})},
\end{equation}
where ${\bf c}'={\bf c-e_1}$ or ${\bf c}'={\bf c+e_j-e_{j+i}}$ for some 
$i, j \in \{1, \dots, n-1\}$ are the only possible transitions. 
Unfortunately, unlike the Kingman case (where the Ewens sampling formula is at hand), there is no explicit closed solution to the recursion \eqref{eq.moehlerec}. However, for a given sample size, the solution to \eqref{eq.moehlerec} could be precomputed and stored in a large database
(this is much easier than in the case of our original recursion for $({\bf t},{\bf n})$  since no explicit type 
configurations ${\bf t}$ need to be stored). This would yield a perfect sampler (given a suitable database) for the
infinite alleles model in the $\Lambda$-case. Still, since a lot of information is lost via our map $\phi$, it is unclear if
this would lead to a good sampler for the infinitely many sites case.
We refer to \cite{Moehle2006} and
\cite{Dong2006} for a more thorough investigation of the infinitely many alleles
model in the $\Lambda$-coalescent case.
\end{remark}

\subsubsection{Schemes based on compressed genetrees}
\label{sseq:huw} 
In the following, we will abbreviate the transition matrix 
of the time-reversed history under 
$\Q_\theta^*$ by 
\[
\Q_\theta^*\big((\mathbf{t}'',\mathbf{n}'') 
\rightarrow (\mathbf{t}',\mathbf{n}')\big) := 
\Q_\theta^*\big(H_{\ell-1}=(\mathbf{t}',\mathbf{n}') 
\mid H_{\ell} = (\mathbf{t}'',\mathbf{n}'')\big)  
\]
for any $(\mathbf{t}',\mathbf{n}'), (\mathbf{t}'',\mathbf{n}'') 
\in \mathcal{T}^*$, 
which is well-defined irrespective of the `target' sample size 
$n$ appearing in $\P_{\theta,n}$ 
(see Lemma~\ref{importance_lemma_optimal_proposal_transition}).

In this section, our goal is to derive 
proposal distributions for
the infinitely many sites model, where at least partial information
about the structure of the type configuration 
${\bf t}$ is retained.

In the simplest case the idea (due to \cite{Hobolth2008}) is to 
subsequently focus on a single mutation in the genetree 
and then to consider the corresponding  ``compressed'' genetree, 
in which this is indeed the only mutation at all.
For such a simple compressed tree, the optimal transition probabilities 
can be computed explicitly (at least numerically). 
Summing over the mutations, these probabilities are then composed to a proposal for the original tree. 

This approach will be explained and extended to the Lambda-coalescent in the next subsection. 
After that, we will show how to extend this framework to 
retain more 
information about the tree, in particular
taking pairs of mutations (and potentially even more) into account.\\

\noindent
{\bf Hobolth, Uyenoyama \& Wiuf's Scheme for Lambda-coalescents.}
\label{importance_sec_hobolth}
{\colzwo
Let $({\bf t, n})$ be a sample with ordered types. 
Since we will consider individual mutations, for the purposes of 
this section, we think of a fixed representative under the mutation 
relabelling relation $\sim$ from Section~\ref{ssn:genetree}. 

Pick a segregating site, say $s' \in \{1, \dots, s\}$. We first introduce the `compressed genetree' of $[{\bf t, n}]$ 
with regard to the mutation 
at the segregating site $s'$.
}
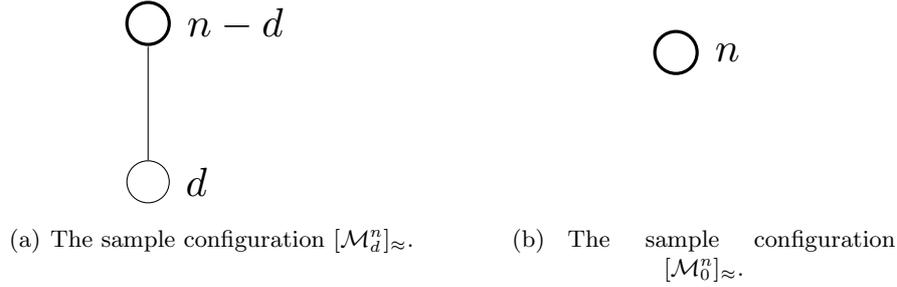
\begin{figure}
\begin{center}
\subfigure[The sample configuration $\lbrack\M^n_d\rbrack_\approx$.]{
\hspace{1.3cm}
\begin{tikzpicture}[scale=1.4, transform shape,inner sep=4pt]
\node[very thick, draw,circle,label=right:{$n-d$}] (root) {}
child {node[draw,circle,label=right:{$d$}] (m1) {}};
\end{tikzpicture}
\hspace{1.3cm}
}
\hspace{1cm}
\subfigure[The sample configuration $\lbrack\M^n_0\rbrack_\approx$.]{
\hspace{1.6cm}
\begin{tikzpicture}[scale=1.4, transform shape,inner sep=4pt]
\node[very thick, draw,circle,label=right:{$n$}] (root) {}
child {edge from parent[draw=none]};
\end{tikzpicture}
\hspace{1.6cm}
}
\caption{The sample configurations $\lbrack\M^n_d\rbrack_\approx$ and
$\lbrack\M^n_0\rbrack_\approx$. The sample has one segregating site respectively
no segregating site.}
\label{importance_fig_one_segregating}
\end{center}
\end{figure}
Denote by 
\[
d(s') = d\big(s',(\mathbf{t},\mathbf{n})\big) = 
\sum_{i \, : \,  \mbox{\tiny type $i$ carries} \atop \mbox{\tiny a mutation at $s'$}} n_i
\]
the number of individuals in the sample bearing a
mutation at {\colzwo the segregating} site {\colzwo$s'$}. 
Let
\begin{equation}
\M^n_d := 
       \Big( \big( (0),(1,0)\big), (n-d,d)\Big), \quad n \in \N, \, d\in \{0, \dots, n\},
\end{equation}
be the genetree where $d$ individuals bear a mutation and $n-d$ do not.
Note that 
\begin{equation}
\M^n_0 := \Big( \big( (0) \big) , (n)\Big)
\end{equation}
is the configuration where all $n$ individuals share the same type. 
See Figure~\ref{importance_fig_one_segregating}. 

\begin{definition}[compressed genetree]
Let $({\bf t, n})$ be a sample of size $n$ 
with $s\ge 1$ segregating sites. Let $s' \in \{1, \dots, s\}$. 
Then, we define the `compressed genetree'
$({\bf t, n})(s')$ with respect to the segregating site $s'$ as
\[
({\bf t, n})(s') := \M^n_{d(s')} = 
\Big( \big( (0),(1,0)\big) , (n-d(s'),d(s'))\Big),
\]
where $d(s')$ is the number of individuals carrying mutation $s'$ in the sample. 
\end{definition}

We now explain how to derive from the optimal proposal distribution for the corresponding compressed trees
a proposal distribution for the original genetree. To this end, fix $s' \in \{1, \dots, s\}$ and let $p_\theta(n,d)$ be the
probability that the most recent mutation in 
$({\bf t, n})$ affected an individual out of the $d=d(s')$
individuals exhibiting a mutation at segregating site $s'$, that is
\begin{equation}
p_\theta(n,d) = \begin{cases}
\sum_{l=1}^{d-1} \Q_\theta^* (\mathcal{M}^{n}_{d} \to \mathcal{M}^{n-l}_{d-l}) & \text{if}\; d>1,\\
\Q_\theta^* (\mathcal{M}^{n}_{1} \to \mathcal{M}^{n}_{0}) & \text{if}\; d=1.
\end{cases}
\end{equation}
In the first case the most recent event was a merger of any size involving
individuals bearing the mutation, whereas in the second case the last event was
the origin of the mutation. Further, define
\begin{equation}\label{defuthetasi}
u_\theta^{(s')}(i) := \begin{cases}
                   p_\theta (n,d_{s'}) \frac{n_i}{d_{s'}} & \text{if}\; \mbox{ type $i$ carries a mutation at $s'$} \\ 
                   (1-p_\theta (n,d_{s'})) \frac{n_i}{n-d_{s'}} & \text{if}\;\mbox{ type $i$ does not carry a mutation at $s'$.}
                  \end{cases}
\end{equation}
Note that $n_i/d$ is the fraction of genes of type $i$ 
among those genes carrying a mutation at segregating site $s'$, thus 
$u_\theta^{(s')}(i)$ would be the exact probability that the most recent 
event in the history involves type $i$ if $s'$ were the only segregating 
site.

\begin{definition}[Eligible types]
Let $({\bf t, n})$ be a genetree with $d$ types. We say that the $k$-th type, where $k \in  \{1, \dots, d\}$, is {\em eligible for transition} 
(or short: {\em eligible}),
if either $n_k \ge 2$ or $n_k=1$ and $x_{k0}$ is unique. 
\end{definition}

We are now ready to state a Lambda-coalescent extension of the \cite{Hobolth2008} proposal distribution (for $\Lambda=\delta_0$, it 
agrees with that from \cite{Hobolth2008}).

\begin{definition}[Proposal distribution 
$\QSingleHUW$]
\label{importance_def_single_hobolth}
We denote by $\QSingleHUW$ the law of a Markov chain on the space of (time-reversed) histories $\H$, starting from 
samples of size $n$, 
if its transition probabilities from a state $({\bf t}',{\bf n}')$ can be described as follows:
\begin{itemize}
\item Pick a type, say $k$, from the set of eligible types of $({\bf t}',{\bf n}')$ at probability
     \[
\frac{
\sum_{s'=1}^s u_\theta^{(s')}(k)
}
{\sum_{\mbox{\tiny $i$ eligible}} \sum_{s'=1}^s u_\theta^{(s')}(i).
}
     \]
\item If the multiplicity of the chosen type $k$ is one remove the outmost mutation.
\item If the multiplicity $n'_k$ is larger than one, perform a merger inside this
group. The size of the merger is determined as follows: 
\begin{itemize}
\item If type $k$ does not bear a mutation, then, an $l+1$ merger, for $1 \le l < n_k$, happens with probability 
proportional to $\Q_\theta^* (\mathcal{M}^{n'}_0 \to \mathcal{M}^{n'-l}_0)$, 
where  
$\Q_\theta^* (\mathcal{M}^{n'}_0 \to \mathcal{M}^1_0) = 
g(n,n')q_{n'1}/G^{(n)}(\mathcal{M}^{n'}_0)$ is the probability of jumping to the terminal state.
\item If type $k$ bears at least one mutation, let $s'$ be the segregating site corresponding to its outmost mutation $x_{k0}$. 
Let $d(s')$ be the number of individuals in the sample bearing a mutation at this segregating site.
Then, an $l+1$ merger, for $1 \le l < n'_k$, happens with probability 
proportional to $\Q_\theta^*(\mathcal{M}^{n'}_{d(s')} \to
\mathcal{M}^{n'-l}_{d(s')-l})$.
\end{itemize}

\end{itemize}

\end{definition}

\begin{remark}
\label{rem_hobolth_one}
(i) The quantities $p_\theta(n,d)$ and the proposal of the merging size involve
the optimal proposal distribution for samples with at most one segregating site
so these quantities can be easily computed numerically and kept in a 
lookup table. \\
(ii) Hobolth et.\ al.\ showed in \cite[Theorem~2]{Hobolth2008} that if the
sample is of size 2, then the optimal proposal distribution chooses one of the
two types proportional to the number of mutations it differs from the root of
the genetree. They note in \cite[Remark~3]{Hobolth2008} that their
proposal distribution equals the optimal one in that case. 
The same statement is true for $\QSingleHUW$, since the
dynamics of a sample of size two does depend on $\Lambda$ only through the total
mass. For more general samples this effect should also favour types that have a
large number of mutations. 
\end{remark}

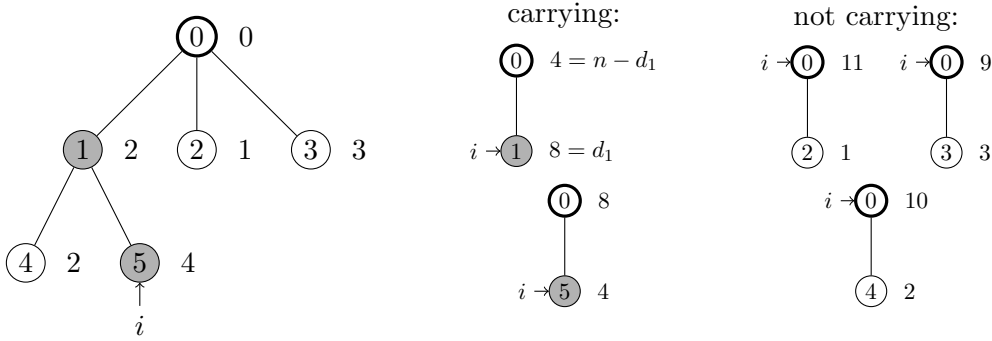
\begin{figure}
\begin{center}
\begin{minipage}[b]{.4\textwidth}
\begin{center}
\setlength{\nodedist}{40pt}
\begin{tikzpicture}[remember picture,scale=1,label distance=.1\nodedist,transform shape,node distance=\nodedist]
\node[very thick, draw,circle,inner sep=2pt,label=right:0] (root) {0}
child {node[fill=black!30,draw,circle,inner sep=2pt,label=right:2] (m1) {1}
child {node[draw,circle,inner sep=2pt,label=right:2] (m4) {4}}
child {node[fill=black!30,draw, circle,inner sep=2pt,label=right:4] (m5) {5}}}
child {node[draw,circle,inner sep=2pt,label=right:1] (m2) {2}}
child {node[draw,circle,inner sep=2pt,label=right:3] (m3) {3}};

\draw[<-] ([yshift=-.05\nodedist]m5) -- +(0,-.5) node[below] {$i$};
\end{tikzpicture}
\end{center}
\end{minipage}
\begin{minipage}[b]{.2\textwidth}
\begin{center}
carrying:\\[1ex]
\setlength{\nodedist}{40pt}
\begin{tikzpicture}[scale=.8,label distance=.1\nodedist,transform shape,node distance=\nodedist]
\node[very thick, draw,circle,inner sep=2pt,label=right:{$4=n-d_1$}] (root) {0}
child {node[fill=black!30,draw, circle,inner sep=2pt,label=right:{$8=d_1$}] (m) {1}};
\draw[<-] (m) -- +(-.5,0) node[left] {$i$};
\end{tikzpicture}\\[1ex]
\setlength{\nodedist}{40pt}
\begin{tikzpicture}[scale=.8,label distance=.1\nodedist,transform shape,node distance=\nodedist]
\node[very thick, draw,circle,inner sep=2pt,label=right:8] (root) {0}
child {node[fill=black!30,draw, circle,inner sep=2pt,label=right:4] (m) {5}};
\draw[<-] (m) -- +(-.5,0) node[left] {$i$};
\end{tikzpicture}\\\vspace{.5cm}
\end{center}
\end{minipage}
\begin{minipage}[b]{.3\textwidth}
\begin{center}
not carrying:\\[1ex]
\setlength{\nodedist}{40pt}
\begin{tikzpicture}[scale=.8,label distance=.1\nodedist,transform shape,node distance=\nodedist]
\node[very thick, draw, circle,inner sep=2pt,label=right:11] (root) {0}
child {node[draw,circle,inner sep=2pt,label=right:1] (m) {2}};
\draw[<-] (root) -- +(-.5,0) node[left] {$i$};
\end{tikzpicture}
\setlength{\nodedist}{40pt}
\begin{tikzpicture}[scale=.8,label distance=.1\nodedist,transform shape,node distance=\nodedist]
\node[very thick, draw, circle,inner sep=2pt,label=right:9] (root) {0}
child {node[draw,circle,inner sep=2pt,label=right:3] (m) {3}};
\draw[<-] (root) -- +(-.5,0) node[left] {$i$};
\end{tikzpicture}\\[1ex]
\setlength{\nodedist}{40pt}
\begin{tikzpicture}[scale=.8,label distance=.1\nodedist,transform shape,node distance=\nodedist]
\node[very thick, draw, circle,inner sep=2pt,label=right:10] (root) {0}
child {node[draw,circle,inner sep=2pt,label=right:2] (m) {4}};
\draw[<-] (root) -- +(-.5,0) node[left] {$i$};
\end{tikzpicture}\\\vspace{.5cm}
\end{center}
\end{minipage}\\
\end{center}
\vspace{-.5cm}
\caption{A sample configuration is depicted on the left and a type $i$ is
marked. On the right all possible compressed genetrees are listed. The type
corresponding to $i$ is marked in the compressed genetrees. Either type $i$
corresponds to the type carrying the mutation or not.}
\label{importance_fig_all_mutations}
\end{figure}

Figure~\ref{importance_fig_all_mutations} depicts a sample configuration and all
corresponding compressed genetrees with one mutation. Hobolth et.\ al.\ provide
in \cite{Hobolth2008} explicit formulae for the optimal transition probabilities
for samples with just one visible mutation if the underlying genealogy is given
by Kingman's coalescent.\\

\noindent
{\bf Schemes regarding Pairs of Mutations} We now extend the
approach of \cite{Hobolth2008} to consider compressed genetrees which allow two mutations.
First note that 
there are two kinds of structurally distinct genetrees with two mutations.
\begin{definition}\label{importance_def_two_mutation_genetrees}
Let
\begin{equation}
\mathcal{M}^n_{d_1,d_2} := \Big( \big[ (0),(1,0),(2,0)\big]_{\colzwo\approx}, (n-d_1-d_2,d_1,d_2)\Big)
\end{equation}
be the genetree where the two mutations are on different branches. The number
of individuals carrying mutation $m$ is $d_m$. Denote by
\begin{equation}
\mathcal{N}^n_{d_1,d_2} := \Big( \big[ (0),(1,0),(2,1,0)\big]_{\colzwo\approx}, (n-d_1,d_1-d_2,d_2)\Big)
\end{equation}
the sample configuration where the mutations are on the same branch. The number
of individuals carrying only mutation 1 is $d_1-d_2$ and both mutations are
carried by $d_2$ individuals.
\end{definition}
The two possible types of genetrees are depicted in
Figure~\ref{importance_pic_two_samples}.
\begin{remark}
(i) Note that $\mathcal{M}^n_{d_1,d_2} = \mathcal{M}^n_{d_2,d_1}$ 
(as equivalence classes under $\approx$) holds for $d_1
+d_2 \leq n$. 
Furthermore, note that $\mathcal{M}^n_{d_1,d_2}$ with $d_1 + d_2 =
n$, $\mathcal{N}^n_{0,d_2}$ and $\mathcal{N}^n_{n,d_2}$ 
denote valid genetrees with two mutations (even though for the latter two, 
mutation 1 is then not segregating), 
whereas by a slight abuse of
notation $\mathcal{M}^n_{d_1,0} = \mathcal{M}^n_{0,d_1} = \M^n_{d_1}$ and
$\mathcal{N}^n_{d_1,0} = \M^n_{d_1}$ denote genetrees with only one segregating
site. We denote by $\M^n_{0,0} = \mathcal{N}^n_{0,0} = \M^n_0$ the
sample of size $n$ with no segregating sites.
\end{remark}

\begin{figure}\centering
\subfigure[The genetree for the sample configuration $\lbrack\M^n_{d_1,d_2}\rbrack_\approx$]{
\label{importance_pic_m_n_d2}
\hspace{.8cm}
\begin{tikzpicture}[scale=0.8, transform shape]
\node[very thick, draw,circle,inner sep=1pt,label=right:{$n-d_1-d_2$}] (root) {0}
child {node[draw,circle,inner sep=1pt,label=right:{$d_1$}] (m1) {1}}
child {node[draw,circle,inner sep=1pt,label=right:{$d_2$}] (m2) {2}
child {edge from parent[draw=none]}};
\end{tikzpicture}
\hspace{.3cm}
}
\hspace{1cm}
\subfigure[The genetree for the sample configuration $\lbrack\mathcal{N}^n_{d_1,d_2}\rbrack_\approx$]{
\label{importance_pic_n_n_d2}
\hspace{1cm}
\setlength{\nodedist}{30pt}
\begin{tikzpicture}[scale=0.8, transform shape,node distance=\nodedist]
\node[very thick, draw,circle,inner sep=1pt,label=right:{$n-d_1$}] (root) {0}
child {node[draw,circle,inner sep=1pt,label=right:{$d_1-d_2$}] (m1) {1} child {node[draw,circle,inner sep=1pt,label=right:{$d_2$}] (m2) {2}}};
\end{tikzpicture}
\hspace{1cm}
}
\vspace{-.3cm}
\caption{The two different sample configurations of size $n$ with two
segregating sites (or mutations).
$d_i$ individuals carry mutation $i$, $i=1,2$. 
}
\label{importance_pic_two_samples}
\end{figure}
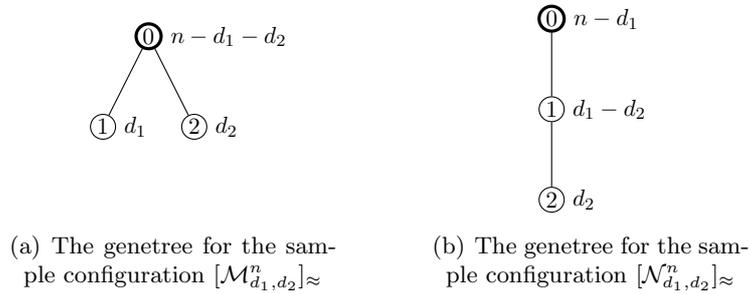

We now introduce the notion of a compressed genetree with regard to pairs of mutations.

{\colzwo
\begin{definition}[compressed genetree]
Let $[{\bf t, n}]$ be a genetree with $s\ge 2$ segregating sites. Let $s', s'' \in \{1, \dots, s\}$. Then, we denote the `compressed genetree'
with respect to the segregating sites $s', s''$ by $[{\bf t, n}](s', s'')$, where
\[
[{\bf t, n}](s', s'') = \M^n_{d(s'), d(s'')}
\]
if there is no type in $[{\bf t, n}]$ which carries mutations at both $s'$ and $s''$, and 
\[
[{\bf t, n}](s', s'') = \mathcal N^n_{d(s'), d(s'')}
\]
if there is at least one type in $[{\bf t, n}]$ which carries mutations at both $s'$ and $s''$, and there is no type which carries a mutation at $s''$ but not at $s'$.
\end{definition}
}

To consider pairs of mutations determining the probabilities of performing a
step involving type $k$ in a general sample configuration $({\bf t},{\bf n})$ it
is necessary to know the relation of the outmost mutation $x_{k0}$ of type $k$ (if it carries a mutation at all) 
to the given pair of mutations (resp.~the corresponding segregating sites) in the genetree. 
In other words, the type in the
compressed genetree corresponding to type $k$ needs to be determined. Based on
this information the appropriate most recent event in the history of the compressed tree can be
chosen. Figure~\ref{importance_fig_two_mutation_compression} shows two examples
of compressed genetrees for two given segregating sites. By symmetry, this
relation can be described by one of five distinct cases.

\begin{definition}
Let $[{\bf t},{\bf n}]$ be a genetree with $s\ge 2$ segregating sites 
and let $k\in \{1,\dots, d\}$. Let $s', s'' \in \{ 1, \dots, s\}$ be two segregating sites.
Then, we distinguish the following cases:\\[1ex]
\begin{tabular}{ll}
Case I &if type $k$ bears mutations at $s'$ and $s''$, \\
Case II &if type $k$ bears a mutation at $s'$, but not at $s''$, \\ & and there exists a type carrying both mutations, \\
Case III &if type $k$ bears a mutation at $s'$, but not at $s''$, \\ & and there exists no type carrying both mutations, \\
Case IV &if type $k$ does not bear any mutation at $s'$ or $s''$, \\ & and there exists a type carrying both mutations, \\
Case V &if type $k$ does not bear any mutation at $s'$ or $s''$, \\ & there exists no type carrying both mutations. \\
\end{tabular}
\end{definition}

The five cases are
depicted in Figure~\ref{importance_pic_five_possibilities}.

\begin{figure}
\begin{center}
\subfigure[The two mutations considered in this example are mutation 1 and 8. 
The compressed genetree is of the form $\mathcal{N}^n_{d_1,d_2}$ and type $i$ corresponds to mutation 1.] {
\setlength{\nodedist}{40pt}
\begin{tikzpicture}[scale=0.8, transform shape,node distance=\nodedist]
\node[very thick, fill=black!50,draw,circle,inner sep=1pt,label=right:3] (root) at (0,0) {0}
child {node[fill=black!50,draw,circle,inner sep=1pt] (m1) {1}
child {node[draw,circle,inner sep=1pt,label=right:1] (m4) {4}}
child {node[draw,circle,inner sep=1pt] (m5) {5}
child {node[fill=black!50,draw,circle,inner sep=1pt,label=right:2] (m8) {8}}}}
child {node[draw,circle,inner sep=1pt,label=right:5] (m2) {2}}
child {node[draw,circle,inner sep=1pt] (m3) {3}
child {node[draw,circle,inner sep=1pt,label=right:1] (m6) {6}}};

\draw[<-] (m5) -- +(.5,0) node[right] {$i$};

\node[draw=none] (blub) at (2.4,-2.2) {{\Large $\Rightarrow$}};

\node[very thick, fill=black!50,draw,circle,inner sep=1pt,label=right:9] (and_root) at (3.9,-0.5) {0}
child {node[fill=black!50,draw,circle,inner sep=1pt,label=right:1] (and_m1) {1}
child {node[fill=black!50,draw,circle,inner sep=1pt,label=right:2] (and_m8) {8}}};

\draw[<-] (and_m1) -- +(-.5,0) node[left] {$i$};
\end{tikzpicture}
}
\hspace{.5cm}
\subfigure[In this example mutation 3 and 5 are considered. The compressed genetree is of the form 
$\mathcal{M}^n_{d_1,d_2}$ and type $i$ corresponds to the root type.] {
\setlength{\nodedist}{40pt}
\begin{tikzpicture}[scale=0.8, transform shape,node distance=\nodedist]
\node[very thick, fill=black!50,draw,circle,inner sep=1pt,label=right:3] (root) at (0,0) {0}
child {node[draw,circle,inner sep=1pt] (m1) {1}
child {node[draw,circle,inner sep=1pt,label=right:1] (m4) {4}}
child {node[fill=black!50,draw,circle,inner sep=1pt] (m5) {5}
child {node[draw,circle,inner sep=1pt,label=right:2] (m8) {8}}}}
child {node[draw,circle,inner sep=1pt,label=right:5] (m2) {2}}
child {node[fill=black!50,draw,circle,inner sep=1pt] (m3) {3}
child {node[draw,circle,inner sep=1pt,label=right:1] (m6) {6}}};

\draw[<-] (m1) -- +(-.5,0) node[left] {$i$};

\node[draw=none] (blub) at (2.4,-2.2) {{\Large $\Rightarrow$}};
\node[very thick, fill=black!50,draw,circle,inner sep=1pt,label=right:9,line width=1.5pt] (and_root) at (3.9,-1.35) {0}
child {node[fill=black!50,draw,circle,inner sep=1pt,label=right:2] (and_m5) {5}}
child {node[fill=black!50,draw,circle,inner sep=1pt,label=right:1] (and_m3) {3}};

\draw[<-] (and_root) -- +(-.5,0) node[left] {$i$};
\end{tikzpicture}
}
\end{center}
\vspace{-.5cm}
\caption{Two examples of genetree compressions. The type $i$ is identified with one of the three types in the compressed sample by this procedure.}
\label{importance_fig_two_mutation_compression}
\end{figure}
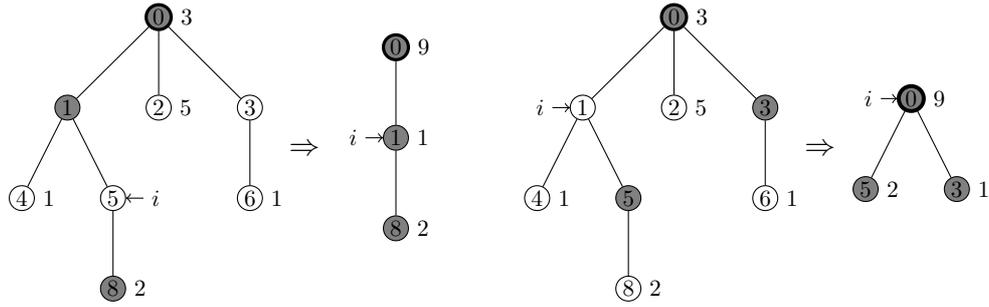

\begin{figure}\centering
\subfigure[Case I]{
\begin{tikzpicture}[scale=0.78, transform shape]
\node[very thick, draw,circle,inner sep=1pt,label=right:{$n-d(s')$}] (root) {0}
child {node[draw,circle,inner sep=1pt,label=right:{$d(s')-d(s'')$}] (m1) {$s'$} child {node[fill=black!50,draw,circle,inner sep=1pt,label=right:{$d(s'')$}] (m2) {$s''$}}};
\end{tikzpicture}
}
\hspace{.1cm}
\subfigure[Case II]{
\begin{tikzpicture}[scale=0.78, transform shape]
\node[very thick, draw,circle,inner sep=1pt,label=right:{$n-d(s')$}] (root) {0}
child {node[fill=black!50,draw,circle,inner sep=1pt,label=right:{$d(s')-d(s'')$}] (m1) {$s'$} child {node[draw,circle,inner sep=1pt,label=right:{$d(s'')$}] (m2) {$s''$}}};
\end{tikzpicture}
}
\hspace{.1cm}
\subfigure[Case III]{
\begin{tikzpicture}[scale=0.78, transform shape]
\node[very thick, draw,circle,inner sep=1pt,label=right:{$n-d(s')-d(s'')$}] (root) {0}
child {node[fill=black!50,draw,circle,inner sep=1pt,label=right:{$d(s')$}] (m1) {$s'$}}
child {node[draw,circle,inner sep=1pt,label=right:{$d(s'')$}] (m2) {$s''$}};
\end{tikzpicture}
}
\hspace{.1cm}
\subfigure[Case IV]{
\begin{tikzpicture}[scale=0.78, transform shape]
\node[very thick, fill=black!50,draw,circle,inner sep=1pt,label=right:{$n-d(s')$}] (root) {0}
child {node[draw,circle,inner sep=1pt,label=right:{$d(s')-d(s'')$}] (m1) {$s'$} child {node[draw,circle,inner sep=1pt,label=right:{$d(s'')$}] (m2) {$s''$}}};
\end{tikzpicture}
}
\hspace{.1cm}
\subfigure[Case V]{
\begin{tikzpicture}[scale=0.78, transform shape]
\node[very thick, fill=black!50,draw,circle,inner sep=1pt,label=right:{$n-d(s')-d(s'')$}] (root) {0}
child {node[draw,circle,inner sep=1pt,label=right:{$d(s')$}] (m1) {$s'$}}
child {node[draw,circle,inner sep=1pt,label=right:{$d(s'')$}] (m2) {$s''$}};
\end{tikzpicture}
}
\caption{The five different cases of types being affected by the most recent event in the two genetrees corresponding to configurations $\mathcal{M}^n_{d(s'),d(s'')}$ (Case III and V) and $\mathcal{M}^n_{d(s'),d(s'')}$ (Case I, II and IV). The shaded node refers to the proposed type.}
\label{importance_pic_five_possibilities}
\end{figure}
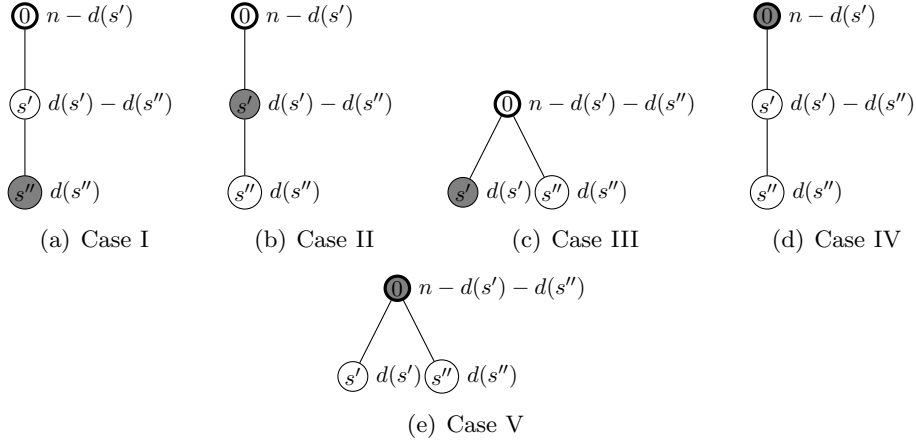

Again, we will now derive proposal distributions based on optimal proposals for the compressed
trees. To this end, note that the optimal transition probabilities for samples with two mutations can be
calculated numerically. To determine the transitions of the proposal Markov
chain until it hits the root configuration corresponding to $\mathcal{M}^1_0$ the transition
probabilities for the cases with one mutation or zero mutations also have to be
precomputed.
Thus, we shall set the probability weights for transitions involving 
samples with at most two mutations equal to the optimal weights in all 
the proposal distributions defined below (at no extra computational cost).

Fix a sample $[{\bf t},{\bf n}]$  with $d$ different types and at least two segregating sites $s', s''$. 
Note that a possible transition of the proposal Markov
chain can be characterised by a pair $(i,l)$ with $1\leq i \leq d$ and $0 \leq l
\leq n_i-1$, where $i$ denotes the type that is involved in the most recent
event and $l$ denotes the amount by which the multiplicity is decreased. Denote
by $l=0$ the case that the outmost mutation of type $i$ is removed from the
genetree (if type $i$ is an eligible singleton).

Now define, for $l \ge 1$, the quantity
\begin{equation}
\mathfrak{u}_\theta^{\{s', s''\}} (i,l) := \begin{cases}
\frac{n_i}{d(s'')} \Q_\theta^* (\mathcal{N}^n_{d(s'),d(s'')} \to \mathcal{N}^{n-l}_{d(s')-l,d(s'')-l}) & \text{in Case I},\\
\frac{n_i}{d(s') - d(s'')} \Q_\theta^* (\mathcal{N}^n_{d(s'),d(s'')} \to \mathcal{N}^{n-l}_{d(s')-l,d(s'')}) & \text{in Case II},\\
\frac{n_i}{d(s')} \Q_\theta^* (\mathcal{M}^n_{d(s'),d(s'')} \to \mathcal{M}^{n-l}_{d(s')-l,d(s'')}) & \text{in Case III},\\
\frac{n_i}{n - d(s')} \Q_\theta^* (\mathcal{N}^n_{d(s'),d(s'')} \to \mathcal{N}^{n-l}_{d(s'),d(s'')}) & \text{in Case IV},\\
\frac{n_i}{n - d(s') - d(s'')} \Q_\theta^* (\mathcal{M}^n_{d(s'),d(s'')} \to \mathcal{M}^{n-l}_{d(s'),d(s'')}) & \text{in Case V}.\\
\end{cases}
\end{equation}
For $l=0$ let
\begin{equation}
\label{importance_eq_pair_valuate_mutation}
\mathfrak{u}^{\{s', s''\}}_\theta (i,0) = \begin{cases}
                    \frac{n_i}{d(s'')}\Q^*_\theta (\mathcal{N}^n_{d(s'),1} \to \mathcal{N}^{n}_{d(s'),0}) \1_{\{d(s'')=1\}} & \text{in Case I},\\
                    0 & \text{in Case II},\\
                     \frac{n_i}{d(s')}\Q^*_\theta (\mathcal{M}^n_{d(s''),1} 
\to \mathcal{M}^{n}_{d(s''),0}) \1_{\{d(s') = 1\}} & \text{in Case III},\\
                    0 & \text{in Case IV},\\
                    0 & \text{in Case V}.\\
                  \end{cases}
\end{equation}
Note that in Case I, III and IV the order of the mutations in the compressed
tree $\mathcal{N}^n_{d(s'),d(s'')}$ is determined by their order in the original
genetree. Analogous to (\ref{defuthetasi}), 
$\mathfrak{u}^{\{s', s''\}}_\theta (i,l)$ would be the 
optimal probability weight of transition $(i,l)$ if only the two mutations 
$s'$ and $s''$ existed in the data. 
\smallskip

Finally, we define for each type $i, 1\leq i\leq d$, and segregating sites $s', s''$, 
\begin{equation}
u_\theta^{\{s', s''\}} (i) := \begin{cases}
\sum_{l=0}^{d(s'')-1} \mathfrak{u}_\theta^{\{s', s''\}} (i,l) & \text{in Case I},\\
\sum_{l=0}^{d(s')-d(s'')-1} \mathfrak{u}_\theta^{\{s', s''\}} (i,l) & \text{in Case II},\\
\sum_{l=0}^{d(s')-1} \mathfrak{u}_\theta^{\{s', s''\}}(i,l) & \text{in Case III},\\
\sum_{l=0}^{n - d(s')-1} \mathfrak{u}_\theta^{\{s', s''\}}(i,l) & \text{in Case IV},\\
\sum_{l=0}^{n-d(s')-d(s'')-1} \mathfrak{u}_\theta^{\{s', s''\}}(i,l) & \text{in Case V}.\\
\end{cases}
\end{equation}
We will now use these quantities as probability weights for the event that in the compressed genetree $[{\bf t}, {\bf n}](s', s'')$,
the last event in the history involved type $k$ (under our new proposal distributions).

\begin{definition}[Probability weights for picking eligible types]
\label{def:prop}
Given $[{\bf t}, {\bf n}]$ with $d$ types and $s\ge 2$ segregating sites, let, for each {\em eligible} $k \in \{1, \dots d\}$,
\[
Q^1_\theta([{\bf t}, {\bf n}])(k) := \frac{\sum_{\{1 \le s'< s'' \le s \}} u_\theta^{\{s', s''\}} (k)}{\sum_{i=1}^{d}
    \sum_{\{1 \le s' < s'' \le s \}} u_\theta^{\{s', s''\}} (i)}.
\]
If $k$ is not eligible, put $Q^1_\theta([{\bf t}, {\bf n}])(k) =0$.
\end{definition}

This distribution can be used to
propose a type to be involved in the most recent event. In a second step one may then choose the size of
the possible merger, similar as before in that again the probabilities of the merger sizes in a specific sample, now
with two mutations, are considered.

\begin{definition}[Proposal distribution $\QPairHUWtwoStepA$] 
\label{importance_def_double_hobolth_one_one}
We define a distribution $\QPairHUWtwoStepA$ on the 
space of histories $\H$ as the law of a Markov chain with transitions as follows.
Let $[{\bf t},{\bf n}]$ be a sample configuration with at least $s \ge 2$ segregating sites and $d\ge 1$ types. 
\begin{itemize} 
\item Choose a type $i \in \{1, \dots, d\}$ to
be involved in the most recent event in history according to $Q^1_\theta([{\bf t}, {\bf n}])(k)$ from Definition~\ref{def:prop}. 
\item If $n_i = 1$, remove the outmost mutation of type $i$ (noting that a.s.~only eligible types can be chosen).
\item If $n_i \geq 2$, and $i$ bears at least one mutation, let $s'$ be the segregating site corresponding to the outmost mutation of type $i$. 
   Reduce the multiplicity of type $i$ by $l$ with probability $\Q_\theta^* (\mathcal{N}^n_{d(s'),d(s')-n_i} \to
           \mathcal{N}^{n-l}_{d(s')-l,d(s')-n_i})$. If type $i$ is the root type, reduce the
           multiplicity by $l$ with probability $\Q_\theta^* (\mathcal{M}^n_{n-n_i} \to
                  \mathcal{M}^{n-l}_{n-l-n_i})$.
\end{itemize}

\end{definition}

Alternatively, one may consider all mutations present in the sample.

\begin{definition}[Proposal distribution $\QPairHUWtwoStepB$]
\label{importance_def_double_hobolth_two_two}
We define a distribution $\QPairHUWtwoStepB$ on the 
space of histories $\H$ as the law of a Markov chain with transitions as follows.
Let $[{\bf t},{\bf n}]$ be a sample configuration with at least $s \ge 2$ segregating sites and $d\ge 1$ types. 
\begin{itemize} 
\item Choose a type $i \in \{1, \dots, d\}$ to
be involved in the most recent event in history according to $Q^1_\theta([{\bf t}, {\bf n}])(k)$ from Definition~\ref{def:prop}. 
\item If $n_i = 1$, remove the outmost mutation of type $i$ (noting that a.s.~only eligible types can be chosen).
\item If $n_i \geq 2$, choose to decrease $n_i$ by
 $l$ with probability proportional to
\begin{equation}
\sum_{s', s''} \mathfrak{u}_\theta^{\{s',s''\}} (i,l),
\end{equation}
where the sum extends over all pairs of segregating sites present in the current sample.
\end{itemize}

\end{definition}

It might appear artificial to consider choosing transition by such a two-step procedure instead of choosing
all at once. Indeed, the method of \cite{Hobolth2008} can be
extended in another direction by choosing the type involved in the most recent
event and the size of the possible merger in {\em one step}. 
We present two proposal distributions that let pairs of mutations valuate {\em
all} possible transitions and then the most recent step is chosen proportionally
to these weights.

\begin{definition}[Proposal Distribution $\QPairHUWoneStepA$]
\label{importance_def_double_hobolth_two_one}
We define a distribution $\QPairHUWoneStepA$ on the 
space of histories $\H$ as the law of a Markov chain with transitions as follows.
Let $[{\bf t},{\bf n}]$ be a sample configuration with at least $s \ge 2$ segregating sites and $d\ge 2$ types. 
We propose the event $(i,l)$ for
$1\leq i \leq d$ and $0 \leq l \leq n_i-1$ to be the most recent evolutionary
event with probability proportional to
\begin{equation}
       \begin{cases}
\sum_{\{s', s''\}} \mathfrak{u}^{\{s', s''\}}_\theta(i,l) & \text{if $i$ is eligible}\\
0 & \text{otherwise}.
\end{cases}
\end{equation}

\end{definition}

Note that in a given sample $({\bf t},{\bf n})$, by~\eqref{importance_eq_pair_valuate_mutation} the contribution of the presence of a pair of mutations
at $\{s', s''\}$ to the event $(i,0)$ of removing the outmost mutation of a
leaf type $i$ is zero if the corresponding $d(s')$ resp.~$d(s'')$ is greater than one. In a
generic genetree this case appears rather frequently and thus we argue that the
proposal distribution 
$\QPairHUWoneStepA$ underrates mutation events. 
\label{page:someone_underrates_mutation_events}
This effect is illustrated in Figure~\ref{importance_fig_underate_mutations}. 

{\ck
\begin{figure}
\begin{center}
\hspace{-1.8cm}\parbox{3.5cm}{%
\begin{tikzpicture}[scale=0.8, transform shape,inner sep=4pt]{\ck
\node[very thick, draw,circle] (root) {}
[grow=right]
child {node[draw,circle,label=below:\scriptsize{I: 3}] (m1) {}
child {node[draw,circle,label=below:\scriptsize{II: 1}] (m2) {}}}
child {node[draw,circle] (m3) {}
child {node[draw,circle] (m4) {}
child {node[draw,circle,label=below:\scriptsize{III: 2}] (m5) {}}}};
}
\end{tikzpicture}} \hspace{1.0cm}
\parbox{7cm}{%
\begin{tabular}{|c|c||c|c|c|c|} 
\hline
$i$ & $l$ & $\QGT$ & $\QPairHUWoneStepA$ & $\QPairHUWoneStepB$ & $\Q_\theta^*$ \\ 
\hline\hline
I & 2 & 0.031 & 0.064 & 0.067 & 0.039 \\
\hline
I & 1 & 0.339 & 0.341 & 0.294 & 0.272 \\
\hline
II & 0 & 0.460 & 0.142 & 0.267 & 0.340 \\
\hline
III & 1 & 0.170 & 0.453 & 0.372 & 0.349 \\
\hline
\end{tabular}}
\end{center}
\caption{Type II has multiplicity one and is a descendant of type I. Thus all
pairs of mutations that do not include the outmost mutation of type II weigh
the step removing the outmost mutation of type two with zero. The table on the
right shows that {\ck $\QPairHUWoneStepB$} is closer to the optimal
distribution than {\ck $\QPairHUWoneStepA$}}
\label{importance_fig_underate_mutations}
\end{figure}
}

To circumvent this problem, one may modify the proposal distribution by summing only over
those pairs of mutations where one of the mutations coincides with the outmost
mutation of the current type. This should reduce the number of pairs that put too
much emphasis on the merging events and establish a more balanced proposal
distribution.

\begin{definition}[Proposal Distribution $\QPairHUWoneStepB$]
\label{importance_def_double_hobolth_two_two_B}
We define a distribution $\QPairHUWoneStepB$ on the 
space of histories $\H$ as the law of a Markov chain with transitions as follows.
Let $[{\bf t},{\bf n}]$ be a sample configuration with at least $s \ge 2$ segregating sites and $d\ge 2$ types. 
We propose the event $(i,l)$ for
$1\leq i \leq d$ and $0 \leq l \leq n_i-1$ to be the most recent evolutionary
event with probability proportional to 
\begin{equation}
       \begin{cases}
       \sum_{s'} \frac{n_i}{n-d_{s'}} \Q_\theta^* (\mathcal{M}^{n}_{d(s')} \to \mathcal{M}^{n-l}_{d(s')}) & \text{if $i$ is eligible and the root type}\\
\sum_{\{s' \neq s_i \}} \mathfrak{u}^{\{s_i, s'\}}_\theta(i,l) & \text{if $i$ is eligible}\\
0 & \text{otherwise},
\end{cases}
\end{equation}
where $s_i$ is the segregating site corresponding to the outmost mutation $x_{i0}$ of type $i$.
\end{definition}

\begin{remark}
(i) Another positive side effect of this method is that it reduces the
complexity of proposing a step from quadratic to linear in the number of
mutations.\\
(ii) In Hobolth \& Wiuf \cite{Hobolth2009}, Section~4, 
explicit expressions for the sampling probabilities 
in the case of Kingman's coalescent for samples with two (nested) segregating sites are presented. 
The authors note in
Section~7 that their results `could potentially be used to further improve the
proposal distribution for inference in coalescent models.' Indeed, 
via Remark~\ref{rem:Gandp}, their
results can be applied to derive explicit formulae for the quantities
$\mathfrak{u}^{\{s_i, s'\}}_\theta(i,l)$ that govern the proposal distributions regarding
pairs of mutations (for $\Lambda=\delta_0$).\\
(iii) Note that the idea to let mutations valuate {\em all} possible transitions
can also be applied for the case when just one mutation at a time is considered
in the sense of $\QSingleHUW$.
\end{remark}

Our last proposal distribution combines the single-mutation approach with the
pair approach.
Indeed, note that the complexity of the proposal distributions
regarding pairs of mutations is quadratic in the number of mutations, whereas
the proposal distributions regarding all mutations have linear complexity. We will
see in Section~\ref{importance_sec_performance} that the real-time to compute
steps for the distributions differ. However, we find that the method determining
the size of the merger in proposal distribution 
$\QPairHUWtwoStepA$ 
from Definition~\ref{importance_def_double_hobolth_one_one} performs well. Thus
a promising candidate concerning speed and performance should be given by the
combination of proposing a type in the first step considering all mutations
separately and then choosing the merging size in the second step by the method
from $\QPairHUWtwoStepA$. 

\begin{definition}[Proposal Distribution $\QPairHUWmixed$]
\label{prop_QPairHUWmixed}
We define a distribution $\QPairHUWmixed$ on the 
space of histories $\H$ as the law of a Markov chain with transitions as follows.
Let $({\bf t},{\bf n})$ be a sample configuration with at least $s \ge 2$ segregating sites and $d\ge 2$ types. 
Choose type $i$ to be involved in the most recent event considering all
mutations according to the same method used for the distribution $\QSingleHUW$
from Definition~\ref{importance_def_single_hobolth}. 
If a singleton type is chosen,
remove the outmost mutation, whereas in the case of a non-singleton type $i$ with
$n_i\geq2$ the multiplicity is decreased by $l$ with probability $\Q_\theta^*
(\mathcal{N}^n_{d(o),d(o)-n_i} \to \mathcal{N}^{n-l}_{d(o)-l,d(o)-n_i})$, where
$1\leq l\leq n_i-1$.
\end{definition}

\begin{remark} \ck For the analysis of a sample of size $n$, 
the proposal schemes from 
(\ref{importance_def_double_hobolth_one_one}, 
\ref{importance_def_double_hobolth_two_two}, 
\ref{importance_def_double_hobolth_two_one},
\ref{importance_def_double_hobolth_two_two_B}, 
\ref{prop_QPairHUWmixed}) all require the numerical computation 
of the solution of (\ref{ordered_ims_recursion}) for all samples 
of size $m \leq n$ with at most two segregating sites. This can be 
precomputed, but should be kept in the computer's main memory during the 
(many) repeated runs. Thus, memory requirements can be a limiting 
factor prohibiting the analysis of large samples. 

Since in a sample 
of size $m \leq n$ with at most two mutations under the IMS there are 
at most three types (of several possible multiplicities), memory of the order $n^3$ will be required. For further speed-up one could also store the transition probabilities for all possible moves for each sample, which would result in a requirement of the order $n^4$.
\end{remark}

\section{Performance Comparison}
\label{importance_sec_performance}

In this section we investigate and compare the performance of the different proposal
distributions, introduced in Section~\ref{importance_sec_proposal_schemes}, 
in various scenarios by means of a (not necessarily comprehensive) simulation study.

Such a study faces two {\ck particular issues which need to be addressed}. 
First, one needs to identify (preferably parametric) sub-families of Lambda-coalescents 
which might be of biological relevance (i.e.~arise from microscopic 
modelling of the behaviour of the underlying population).
{\ck We will focus our attention to so-called {\em Beta-coalescents}, recalled below.}
A second {\ck issue} is owed to the fact that tractable sample complexities are still in the low three-digit numbers ($\approx 100$).
If one wishes to compare the performance of our sampling schemes one has to use either 
a few less generic scenarios where the samples have relatively large complexities or many samples of small complexity

This section can be outlined as follows: First, we introduce and discuss the class of Beta-coalescents. Then, we measure empirically the 
total variation distance between our proposal distributions and the optimal 
distribution for a small sample complexity. 
Next, we compare the concrete performance of our schemes for several randomly generated samples 
of small size for various scenarios, and for several relatively large real DNA sequence data samples.
Finally, we will discuss our results and try to come up with recommendations for the practitioner.

\subsection{{\ck Beta-coalescents}}

Recall that our `parameter' $\theta=(r, \Lambda)$ consists of the mutation rate $r$ 
and the underlying Lambda-coalescent with coalescent measure $\Lambda$. 
The case where $\Lambda = \delta_0$ is the classical Kingman case describing 
populations with constant population size and reproduction events
which are small when compared to the total population size.
Here, we will consider the case where $\Lambda= B(2-\alpha, \alpha)$, 
with $\alpha \in (0,2)$, {\ck that is,} so-called `Beta-coalescents' introduced by \cite{Schweinsberg2003}, whose density is given by
\[
\Lambda(dx)= \frac{\Gamma(2)}{\Gamma(2-\alpha)\Gamma(\alpha)} x^{1-\alpha} (1-x)^{\alpha-1} \, dx.
\]
Note that the Kingman-coalescent corresponds to the weak limit as $\alpha \to 2$. See, e.g., \cite{Schweinsberg2003} or \cite{Birkner2009} and the references there for a discussion 
of possible biological motivations of this class. 

\subsection{Distance to the optimal proposal distribution}

For small {\ck sample} complexities, 
it is possible to solve our recursions (\ref{recursion}), (\ref{real_ims_recursion}) and (\ref{ordered_ims_recursion})
numerically and hence to compute optimal proposal weights directly.
It is therefore natural to measure the distance between the optimal proposal distribution and 
our candidate distributions for such small complexities.
We consider a selection of parameter values for the Beta-coalescent (including the Kingman-coalescent) in Table~\ref{tb:dist} 
and present the total variation distance of the optimal weights of the possible steps and the 
weights given by the candidate distribution averaged over all possible samples of complexity 15. In enumerating all these samples, viewed as trees, we have found 
algorithms from \cite{Knuth2005} very helpful.

The relative ranking of the different candidates implied by the total variation distance 
is similar when using the mean-squared distance or the relative entropy (data not shown). 
The respective minimisers are printed in bold.

\begin{table}
\begin{center}
\scriptsize
\begin{tabular}{||l||l|l|l||l|l|l||l|l|l||}
\hline\hline
& \multicolumn{3}{c||}{$r = 0.5$}& \multicolumn{3}{c||}{$r = 1$}& \multicolumn{3}{c||}{$r = 2$}\\
\hline
& $\alpha=1$& $\alpha=1.5$& $\alpha=2$& $\alpha=1$& $\alpha=1.5$& $\alpha=2$& $\alpha=1$& $\alpha=1.5$& $\alpha=2$\\
\hline\hline
$\QGT$ & 0.166 & 0.118 & 0.080 & 0.172 & 0.134 & 0.088 & 0.127 & 0.114 & 0.084 \\
\hline
$\QSD$ & 0.226 & 0.114 & 0.060 & 0.220 & 0.142 & 0.088 & 0.151 & 0.115 & 0.084 \\
\hline
$\QSingleHUW$ & 0.115 & 0.077 & 0.045 & 0.119 & 0.102 & 0.074 & 0.083 & 0.082 & 0.071 \\
\hline
$\QPairHUWoneStepA$ & 0.069 & 0.058 & 0.039 & 0.088 & 0.096 & 0.084 & 0.068 & 0.082 & 0.091 \\
\hline
$\QPairHUWoneStepB$ & {\bf 0.054} & 0.047 & 0.038 & {\bf 0.064} & {\bf 0.065} & 0.063 & {\bf 0.053} & {\bf 0.055} & 0.060 \\
\hline
$\QPairHUWtwoStepA$ & 0.063 & 0.044 & {\bf 0.026} & 0.081 & 0.072 & {\bf 0.053} & 0.060 & 0.062 & {\bf 0.055} \\
\hline
$\QPairHUWtwoStepB$ & 0.058 & {\bf 0.041} & {\bf 0.026} & 0.076 & 0.069 & {\bf 0.053} & 0.058 & 0.059 & {\bf 0.055} \\
\hline
$\QPairHUWmixed$ & 0.092 & 0.063 & 0.038 & 0.111 & 0.097 & 0.071 & 0.081 & 0.081 & 0.071 \\
\hline\hline
\end{tabular}
\caption{\small Total variation distance between optimal proposal distribution and importance sampling schemes, {\ck averaged over all} samples of complexity 15. }
\label{tb:dist}
\end{center}
\end{table}

The best results are consistently provided by methods based on compressed genetrees with 
two mutations, {\ck namely}  $\QPairHUWoneStepB$, $\QPairHUWtwoStepA$ and $\QPairHUWtwoStepB$.
This is true not only for the Beta-coalescent, but in particular for Kingman's coalescent, so that our
{\ck new} methods seem to outperform even the classical methods known so far,  {\ck at least with respect to this rather theoretical criterion}.

\subsection{Performance comparison for different specif{\ck ic} tree structures}
\label{sec_specific_structures}

In this subsection, we aim to investigate strengths and weaknesses 
of our methods depending on the structure of the genetrees encoded by the datasets.

To this end, we simulated 500 genetrees under given 
parameters (for Beta-coalescents) of {\em sample size} 15. Note that
the corresponding tree complexities vary and can be much {\ck bigger} than 15.
From these 500 trees, we $a)$ uniformly pick one tree with an `average' number of mutations
(note that the distribution of the number of mutations can easily be 
computed recursively) and 
$b)$ choose a tree with a number of mutations according to the empirical $80 \%$ 
quantile of the 500 simulated trees (i.e.~a tree with `many' mutations). 
.
Sample trees chosen according to other criteria of `atypically high 
sample complexity' yielded similar results to those from case~$b$ (data 
not shown). The computations were carried out using {\tt MetaGeneTree} on computers with a standard performance (using AMD Opteron CPUs with 2.6~GHz).

We begin with $a)$, an {\em average tree} (with respect to number of mutations, for the given parameters), and investigate the performance 
of our methods for three different parameter values.
Figure~\ref{fig_avg_trees} 
shows the genetrees and the respective parameters used for its 
generation. Figure~\ref{fig_avg} shows the respective number of runs 
and computing time needed so that the relative empirical error
of the likelihood estimate becomes smaller than $1\%$. 
{\ck Again, our proposal distributions based on compressed genetrees fare rather well, with the notable exception of $\QPairHUWoneStepA$. }

\begin{figure}
\begin{center}
\psfrag{1}{$\scriptstyle 1$}
\psfrag{2}{$\scriptstyle 2$}
\psfrag{3}{$\scriptstyle 3$}
\psfrag{4}{$\scriptstyle 4$}
\psfrag{9}{$\scriptstyle 9$}
\psfrag{11}{$\scriptstyle 11$}
\subfigure[$r=0.5,\alpha=2$] {\label{fig_avg_trees_m05a2}
\fbox{\includegraphics{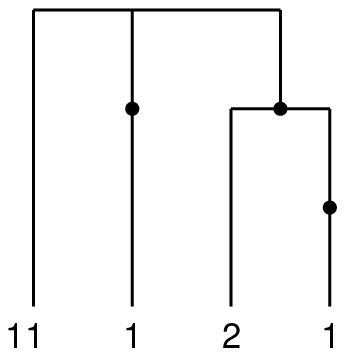}}
}
\subfigure[$r=0.5,\alpha=1$] {\label{fig_avg_trees_m05a1}
\fbox{\includegraphics{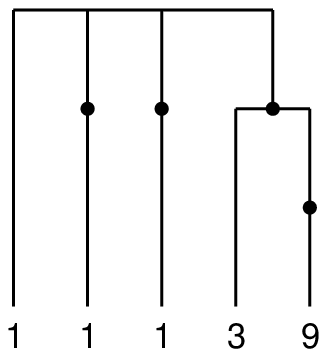}} 
}
\subfigure[$r=1.5,\alpha=1.5$] {\label{fig_avg_trees_m15a15}
\fbox{\includegraphics{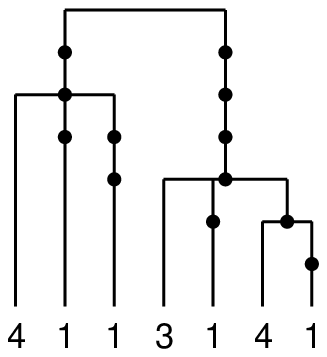}} 
}
\end{center}
\caption{Trees showing an average number of mutations out of 500 simulated trees 
under the respective parameters (leaf labels correspond to type multiplicities).}
\label{fig_avg_trees}
\end{figure}

\begin{figure}
\begin{center}
\psfrag{1}{$\scriptstyle 1$}
\psfrag{2}{$\scriptstyle 2$}
\psfrag{3}{$\scriptstyle 3$}
\psfrag{4}{$\scriptstyle 4$}
\psfrag{6}{$\scriptstyle 6$}
\psfrag{10}{$\scriptstyle 10$}
\subfigure[$r=0.5,\alpha=1$] {\label{fig_mut_trees_m05a1}
\fbox{\includegraphics{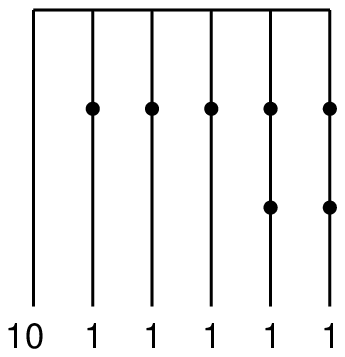}}
}
\subfigure[$r=1.5,\alpha=2$] {\label{fig_mut_trees_m15a2}
\fbox{\includegraphics{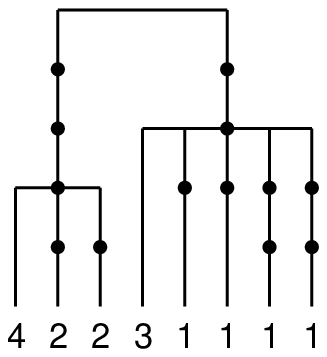}} 
}
\subfigure[$r=1.5,\alpha=1$] {\label{fig_mut_trees_m15a1}
\fbox{\includegraphics{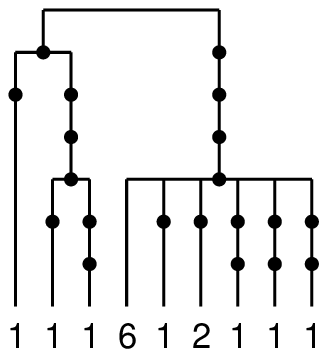}} 
}
\end{center}
\caption{Trees showing a number of mutations that equals the empirical 80\% quantile of 500 simulated trees under the respective parameters.}
\label{fig_mut_trees}
\end{figure}

\begin{figure}
\begin{center}
\psfrag{p1}[cr][c][1][270]{{\scriptsize $\QGT$}}
\psfrag{p7}[cr][c][1][270]{{\scriptsize $\QSD$}}
\psfrag{p8}[cr][c][1][270]{{\scriptsize $\QSingleHUW$}}
\psfrag{p10}[cr][c][1][270]{{\scriptsize $\QPairHUWoneStepA$}}
\psfrag{p12}[cr][c][1][270]{{\scriptsize $\QPairHUWoneStepB$}}
\psfrag{p14}[cr][c][1][270]{{\scriptsize $\QPairHUWtwoStepA$}}
\psfrag{p15}[cr][c][1][270]{{\scriptsize $\QPairHUWtwoStepB$}}
\psfrag{p18}[cr][c][1][270]{{\scriptsize $\QPairHUWmixed$}}
\psfrag{m05a2}{{\tiny $r=0.5,\alpha=2$}}
\psfrag{m15a15}{{\tiny $r=1.5,\alpha=1.5$}}
\psfrag{m05a1}{{\tiny $r=0.5,\alpha=1$}}
\psfrag{xlab}[cc]{{\footnotesize Distribution}}
\psfrag{ylab}[cc]{{\footnotesize log(\#)}}
\subfigure[Base-10 logarithm of the number of runs needed ($\log(\#)$).] {\label{fig_avg_runs}
\includegraphics[scale=.40]{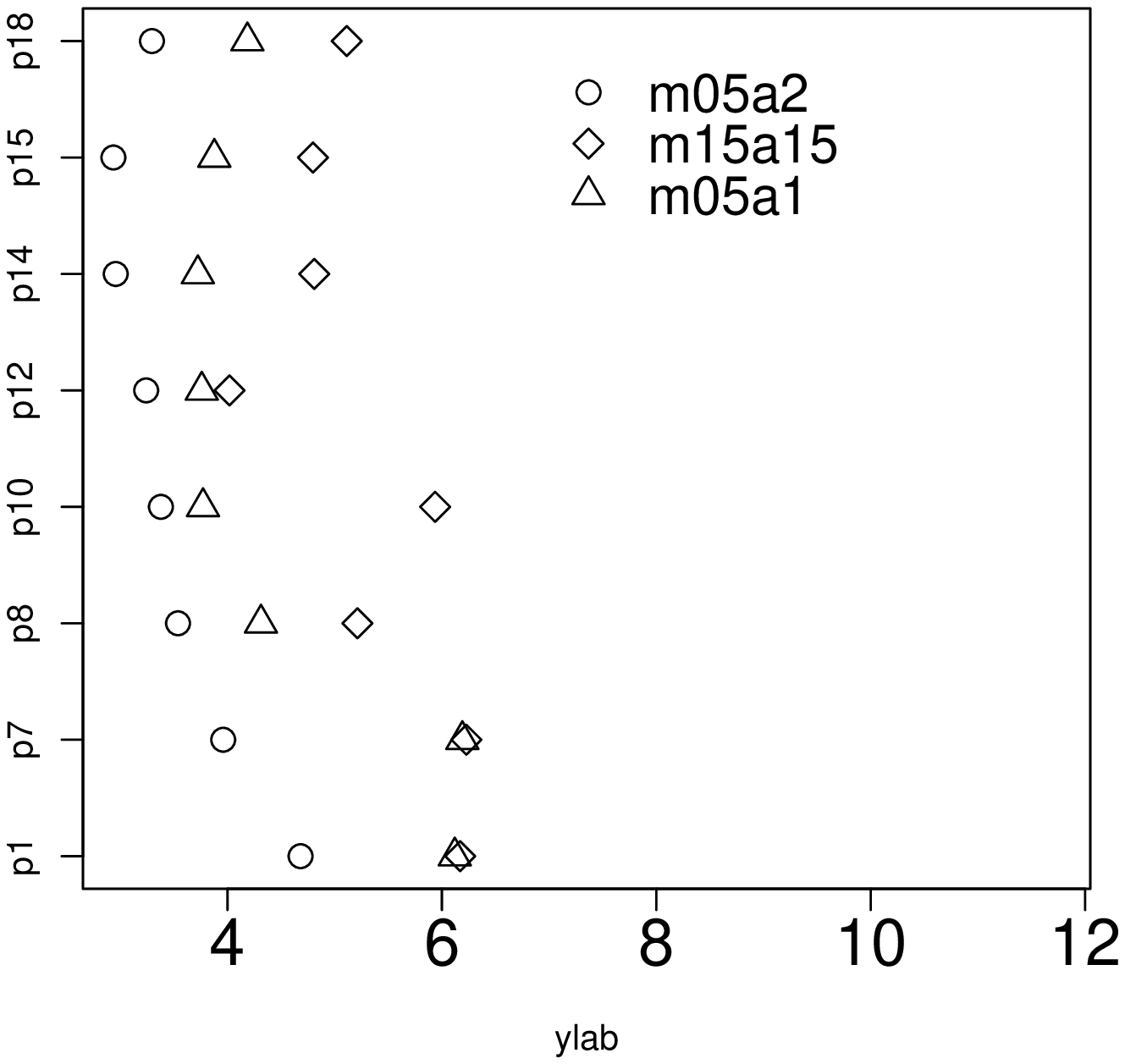}}
\psfrag{ylab}[cc]{{\footnotesize log($t$)}}
\subfigure[Base-10 logarithm of the computing time needed in seconds ($\log(t)$).] {\label{fig_avg_times}\ \hspace{0.5cm}\includegraphics[scale=.40]{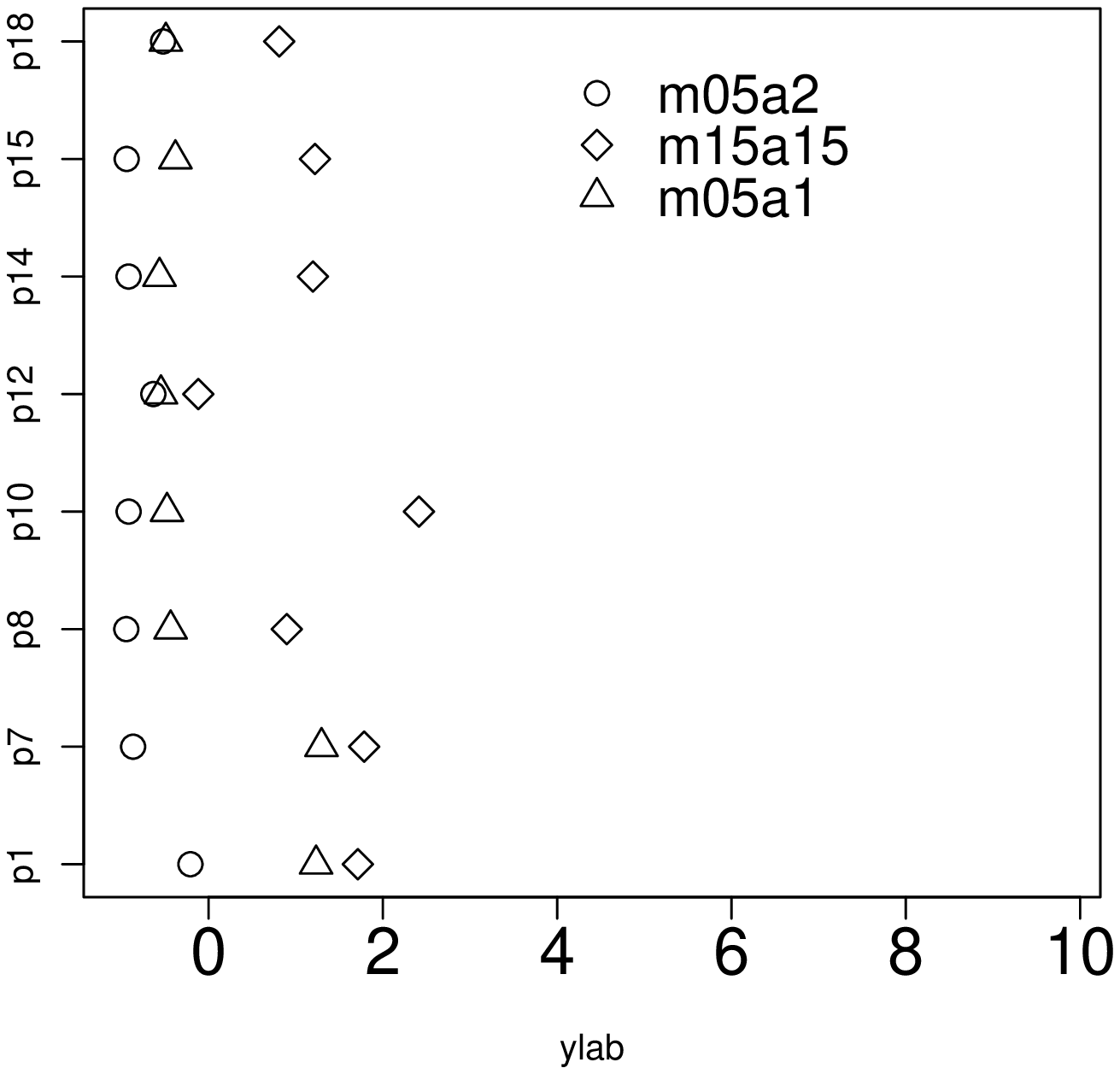}}
\end{center}
\caption{\small Number of runs and computing time needed to obtain a relative error below 1\% for the {\ck `average trees' given in Figure~\ref{fig_avg_trees}.}}
\label{fig_avg}
\end{figure}

$b)$\ Our next set of {\ck genetrees corresponds to} the $80 \%$ quantile 
with respect to the number of mutations on the tree (i.e.~trees with an
exceptionally large number of mutations, and therefore {\ck relatively} high tree complexity). 
Figure~\ref{fig_mut_trees} 
shows the genetrees and the 
respective parameters and Figure~\ref{fig_mut} gives the number of runs and 
computing time needed so that the relative empirical error
of the likelihood estimate becomes smaller than $1\%$.
{\ck As expected, the average computational time, due to increased complexity, increases significantly 
in comparison to an `average' tree. The relative performance of our methods, however, remains
similar -- in particular, $\QPairHUWoneStepB$ performs best.}

\begin{figure}
\begin{center}
\psfrag{p1}[cr][c][1][270]{{\scriptsize $\QGT$}}
\psfrag{p7}[cr][c][1][270]{{\scriptsize $\QSD$}}
\psfrag{p8}[cr][c][1][270]{{\scriptsize $\QSingleHUW$}}
\psfrag{p10}[cr][c][1][270]{{\scriptsize $\QPairHUWoneStepA$}}
\psfrag{p12}[cr][c][1][270]{{\scriptsize $\QPairHUWoneStepB$}}
\psfrag{p14}[cr][c][1][270]{{\scriptsize $\QPairHUWtwoStepA$}}
\psfrag{p15}[cr][c][1][270]{{\scriptsize $\QPairHUWtwoStepB$}}
\psfrag{p18}[cr][c][1][270]{{\scriptsize $\QPairHUWmixed$}}
\psfrag{m05a2}{{\tiny $r=0.5,\alpha=2$}}
\psfrag{m15a15}{{\tiny $r=1.5,\alpha=1.5$}}
\psfrag{m15a1}{{\tiny $r=1.5,\alpha=1$}}
\psfrag{m05a1}{{\tiny $r=0.5,\alpha=1$}}
\psfrag{m15a2}{{\tiny $r=1.5,\alpha=2$}}
\psfrag{xlab}[cc]{{\footnotesize Distribution}}
\psfrag{ylab}[cc]{{\footnotesize log(\#)}}
\subfigure[Base-10 logarithm of the number of runs needed ($\log(\#)$).] {\label{fig_mut_runs}\includegraphics[scale=.40]{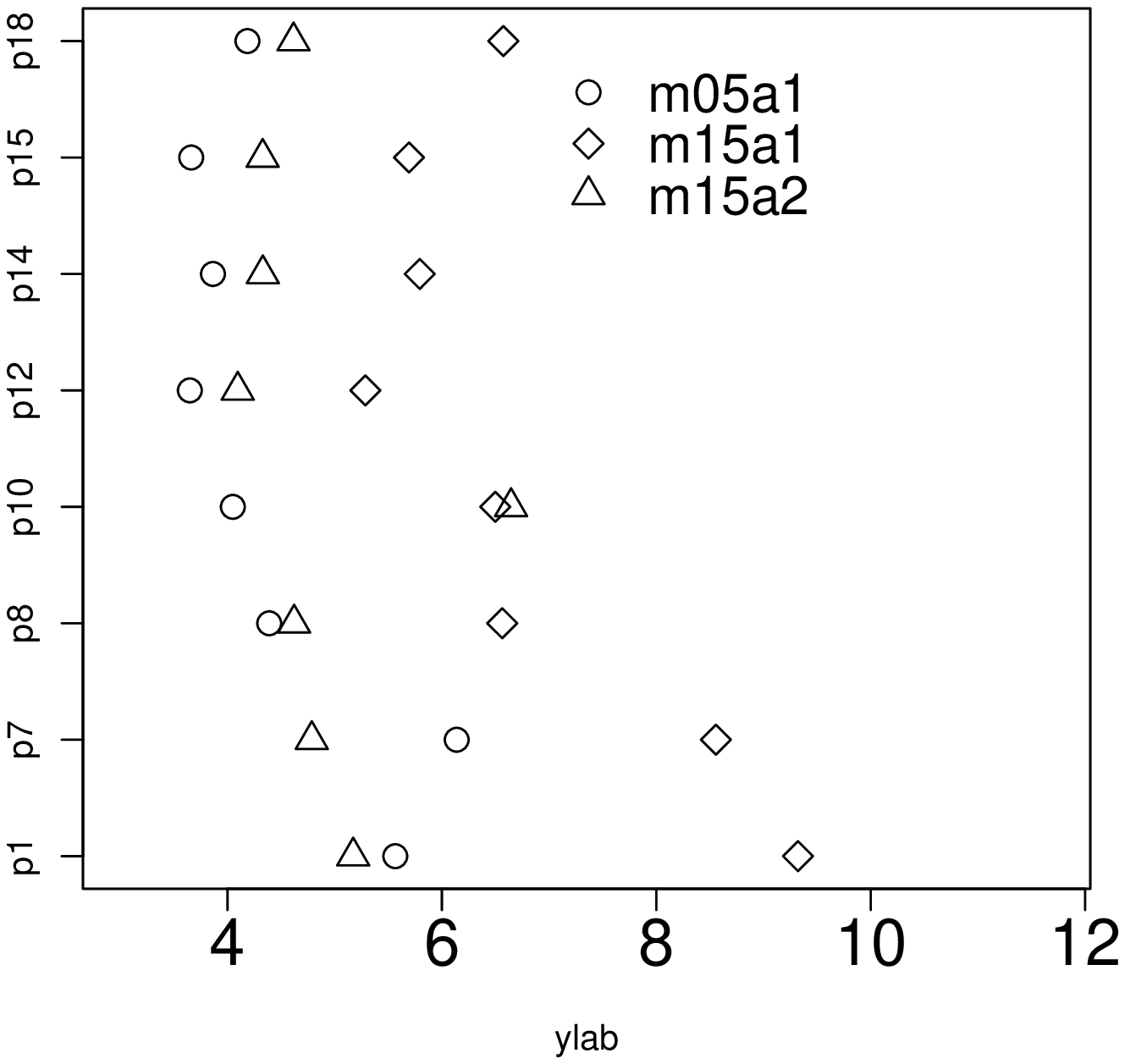}}
\psfrag{ylab}[cc]{{\footnotesize log($t$)}}
\subfigure[Base-10 logarithm of the computing time needed in seconds ($\log(t)$).] {\label{fig_mut_times}\ \hspace{0.5cm}\includegraphics[scale=.40]{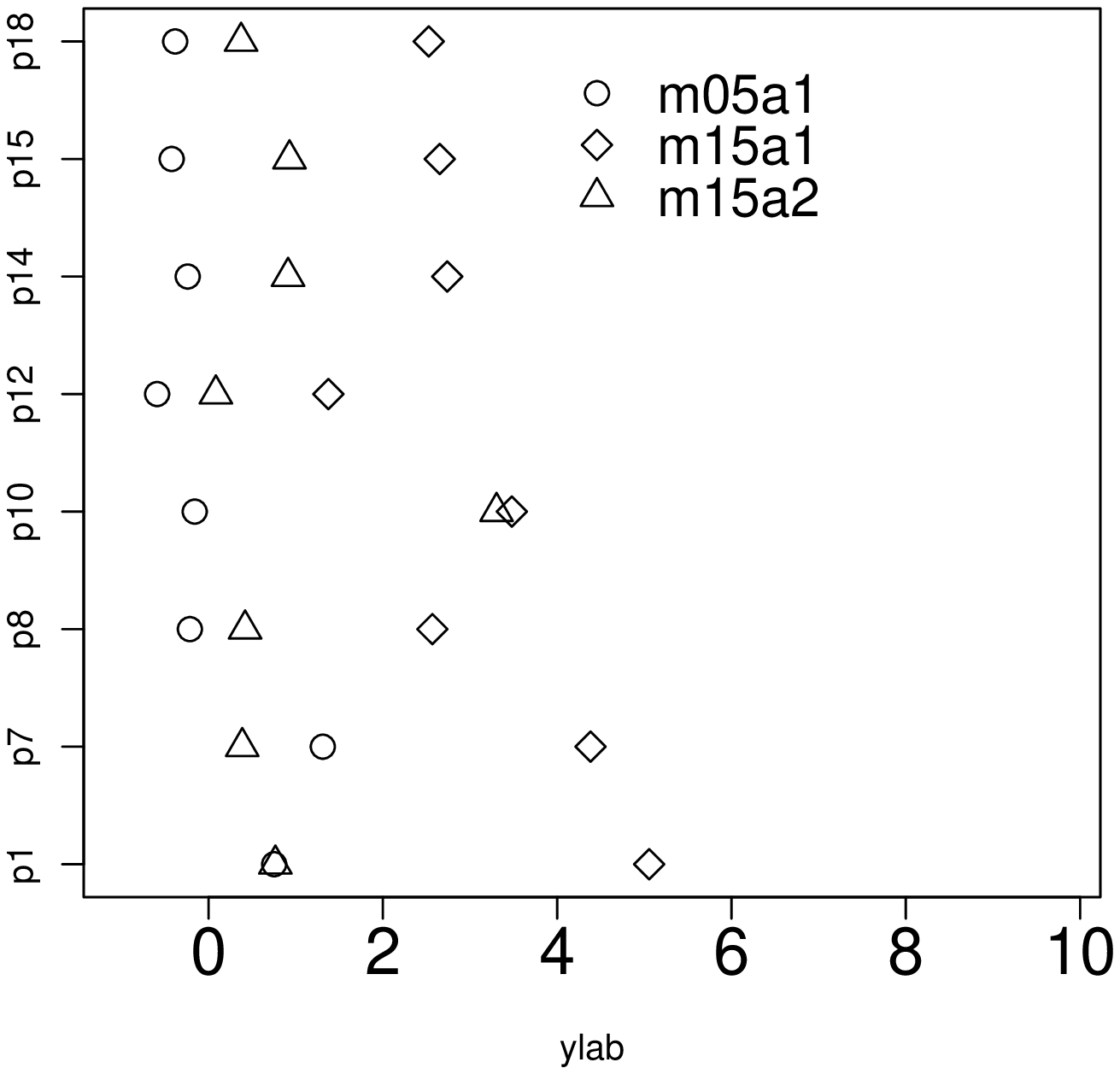}}
\end{center}
\caption{Number of runs and computing time needed to obtain a relative error below 1\% for the {\ck trees of high relative complexity} given in Figure~\ref{fig_mut_trees}.}
\label{fig_mut}
\end{figure}

\subsection{Average performance over many samples}
\label{sec_average_performance}

{\ck We simulated} 100
samples under a given pair of parameters and estimate{\ck d} the likelihood {\ck of} these
samples {\ck for the same} parameters.
Whereas for the analysis in the previous section we provided the exact number of runs, we now 
cumulated additional simulation runs 
until the relative error dropped below $1\%$, increasing the number 
of new runs by a factor of $4$ in each step. Density plots for 
the number of runs needed to achieve this are given in
Figure~\ref{importance_fig_m1_a15_hist_runs} for the parameters (1, 1.5) and in
Figure~\ref{importance_fig_m1_a2_hist_runs} for the parameters (1, 2) {\ck for selected} proposal distributions.

As before, we also measured the time required to achieve a
relative error below $1\%$ in term of the actual computing time in seconds.
The base-10 logarithms of the corresponding times are given in
Figure~\ref{importance_fig_m1_a15_hist_times} and
Figure~\ref{importance_fig_m1_a2_hist_times} for selected proposal distributions.
Since one simulated sample for $\alpha=2$ showed no mutations, we assumed a
duration of zero. For $\alpha=1.5$ (Figure~\ref{importance_fig_m1_a15_hist}) the proposal distribution $\QPairHUWoneStepB$ again performs better than the others. However, for $\alpha=2$ (Figure~\ref{importance_fig_m1_a2_hist}) 
performances are very similar 
with even a slight disadvantage for $\QPairHUWoneStepB$ in terms of computing time.

\begin{figure}
\begin{center}
\subfigure[Histogram of the base-10 logarithmic number of runs needed to obtain a 
relative error less than 1\%.]{\label{importance_fig_m1_a15_hist_runs}
\psfrag{0}{{\tiny 0}}
\psfrag{10}{{\tiny 10}}
\psfrag{20}{{\tiny 20}}
\psfrag{30}{{\tiny 30}}
\psfrag{40}{{\tiny 40}}
\psfrag{50}{{\tiny 50}}
\psfrag{60}{{\tiny 60}}
\psfrag{70}{{\tiny 70}}
\psfrag{-0.4}{{\tiny -0.4}}
\psfrag{1}{{\tiny 1}}
\psfrag{2.3}{{\tiny 2.3}}
\psfrag{3.6}{{\tiny 3.6}}
\psfrag{4.8}{{\tiny 4.8}}
\psfrag{6}{{\tiny 6}}
\psfrag{7.2}{{\tiny 7.2}}
\psfrag{porp1}{{\scriptsize $\QGT$}}
\psfrag{porp8}{{\scriptsize $\QSingleHUW$}}
\psfrag{porp12}{{\scriptsize $\QPairHUWoneStepB$}}
\psfrag{porp14}{{\scriptsize $\QPairHUWtwoStepA$}}
\psfrag{xlab}{log(\#)}
\psfrag{ylab}{count}
\includegraphics[scale=.3]{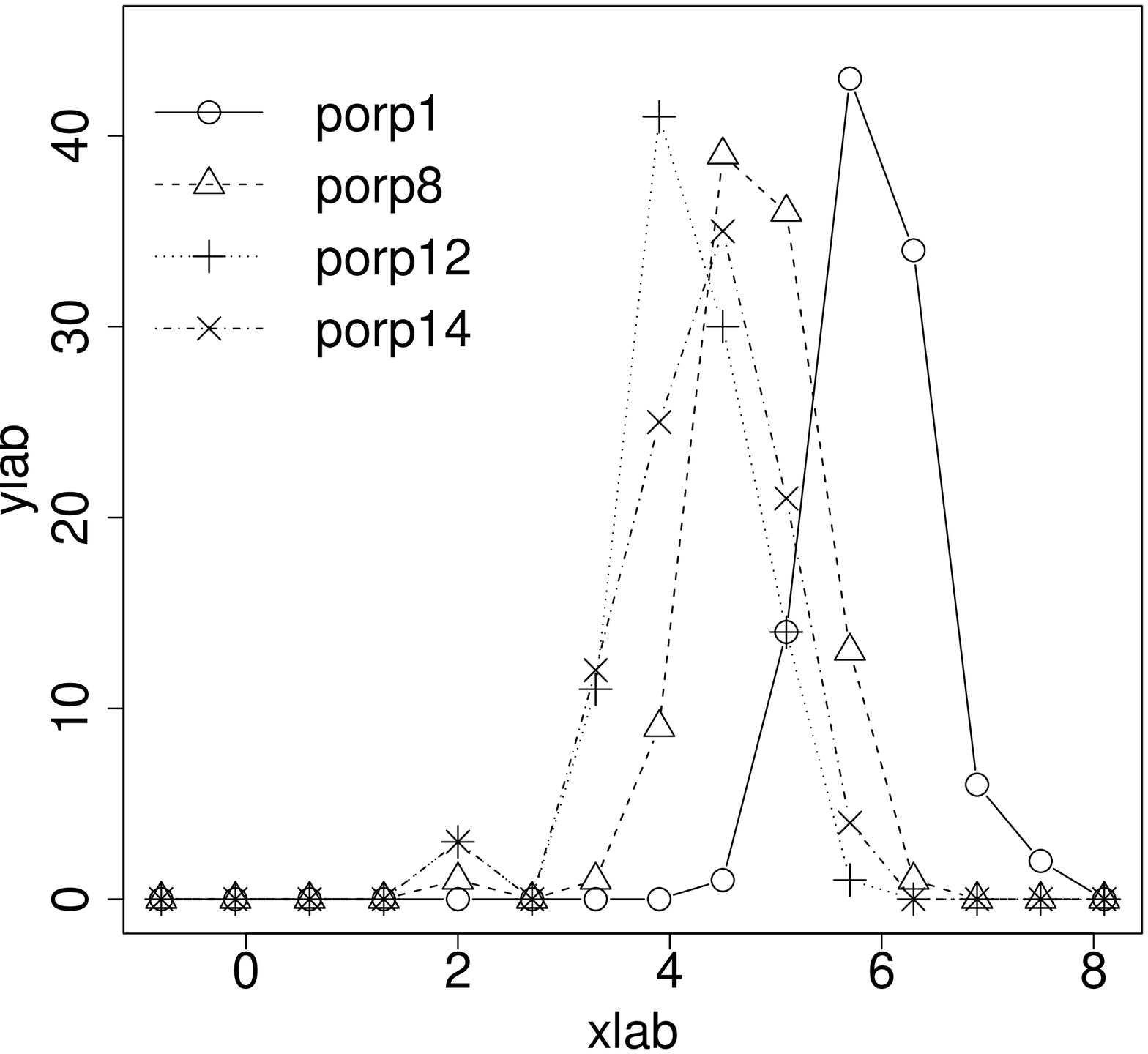}
}
\hspace{1cm}
\subfigure[Histogram of the base-10 logarithmic real-time needed to obtain a 
relative error less than 1\%.]{\label{importance_fig_m1_a15_hist_times}
\psfrag{0}{{\tiny 0}}
\psfrag{10}{{\tiny 10}}
\psfrag{20}{{\tiny 20}}
\psfrag{30}{{\tiny 30}}
\psfrag{40}{{\tiny 40}}
\psfrag{50}{{\tiny 50}}
\psfrag{60}{{\tiny 60}}
\psfrag{70}{{\tiny 70}}
\psfrag{-1.5}{{\tiny -1.5}}
\psfrag{-0.5}{{\tiny -0.5}}
\psfrag{0.5}{{\tiny 0.5}}
\psfrag{1.5}{{\tiny 1.5}}
\psfrag{2.5}{{\tiny 2.5}}
\psfrag{porp1}{{\scriptsize $\QGT$}}
\psfrag{porp8}{{\scriptsize $\QSingleHUW$}}
\psfrag{porp12}{{\scriptsize $\QPairHUWoneStepB$}}
\psfrag{porp14}{{\scriptsize $\QPairHUWtwoStepA$}}
\psfrag{xlab}{log($t$)}
\psfrag{ylab}{count}
\includegraphics[scale=.3]{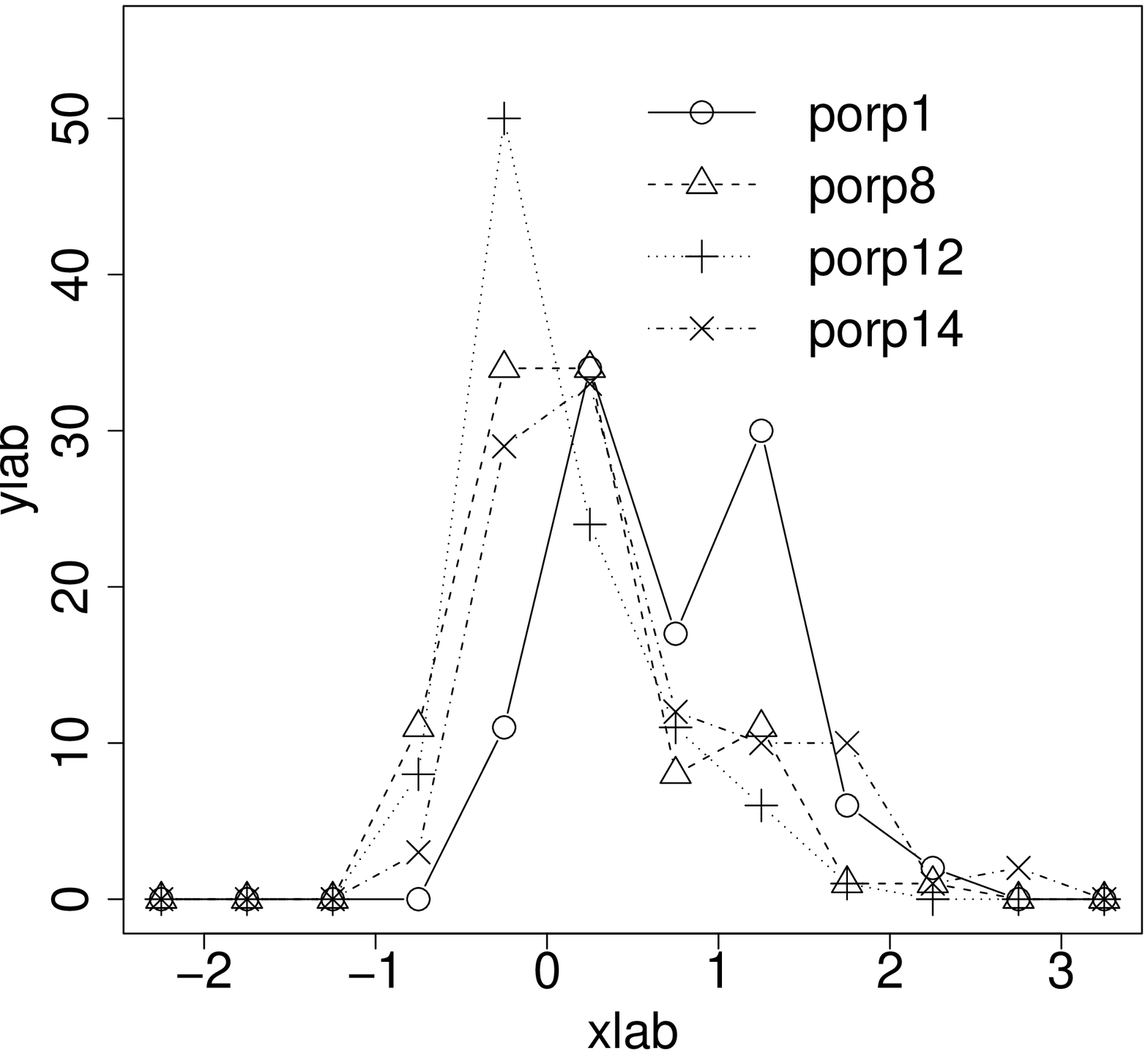}
}
\end{center}
\caption{Empirical distributions for the number of runs and the real-time for 100 
samples of size 15, simulated with $r=1$ and $\alpha=1.5$. The likelihood was computed for the same parameters.}
\label{importance_fig_m1_a15_hist}
\end{figure}

\begin{figure}
\begin{center}
\subfigure[Histogram of the base-10 logarithmic number of runs needed to obtain 
a relative error less than 1\%.]{\label{importance_fig_m1_a2_hist_runs}
\psfrag{0}{{\tiny 0}}
\psfrag{10}{{\tiny 10}}
\psfrag{20}{{\tiny 20}}
\psfrag{30}{{\tiny 30}}
\psfrag{40}{{\tiny 40}}
\psfrag{50}{{\tiny 50}}
\psfrag{60}{{\tiny 60}}
\psfrag{70}{{\tiny 70}}
\psfrag{-0.4}{{\tiny -0.4}}
\psfrag{1}{{\tiny 1}}
\psfrag{2.3}{{\tiny 2.3}}
\psfrag{3.6}{{\tiny 3.6}}
\psfrag{4.8}{{\tiny 4.8}}
\psfrag{6}{{\tiny 6}}
\psfrag{7.2}{{\tiny 7.2}}
\psfrag{porp1}{{\scriptsize $\QGT$}}
\psfrag{porp8}{{\scriptsize $\QSingleHUW$}}
\psfrag{porp12}{{\scriptsize $\QPairHUWoneStepB$}}
\psfrag{porp14}{{\scriptsize $\QPairHUWtwoStepA$}}
\psfrag{xlab}{log(\#)}
\psfrag{ylab}{count}
\includegraphics[scale=.3]{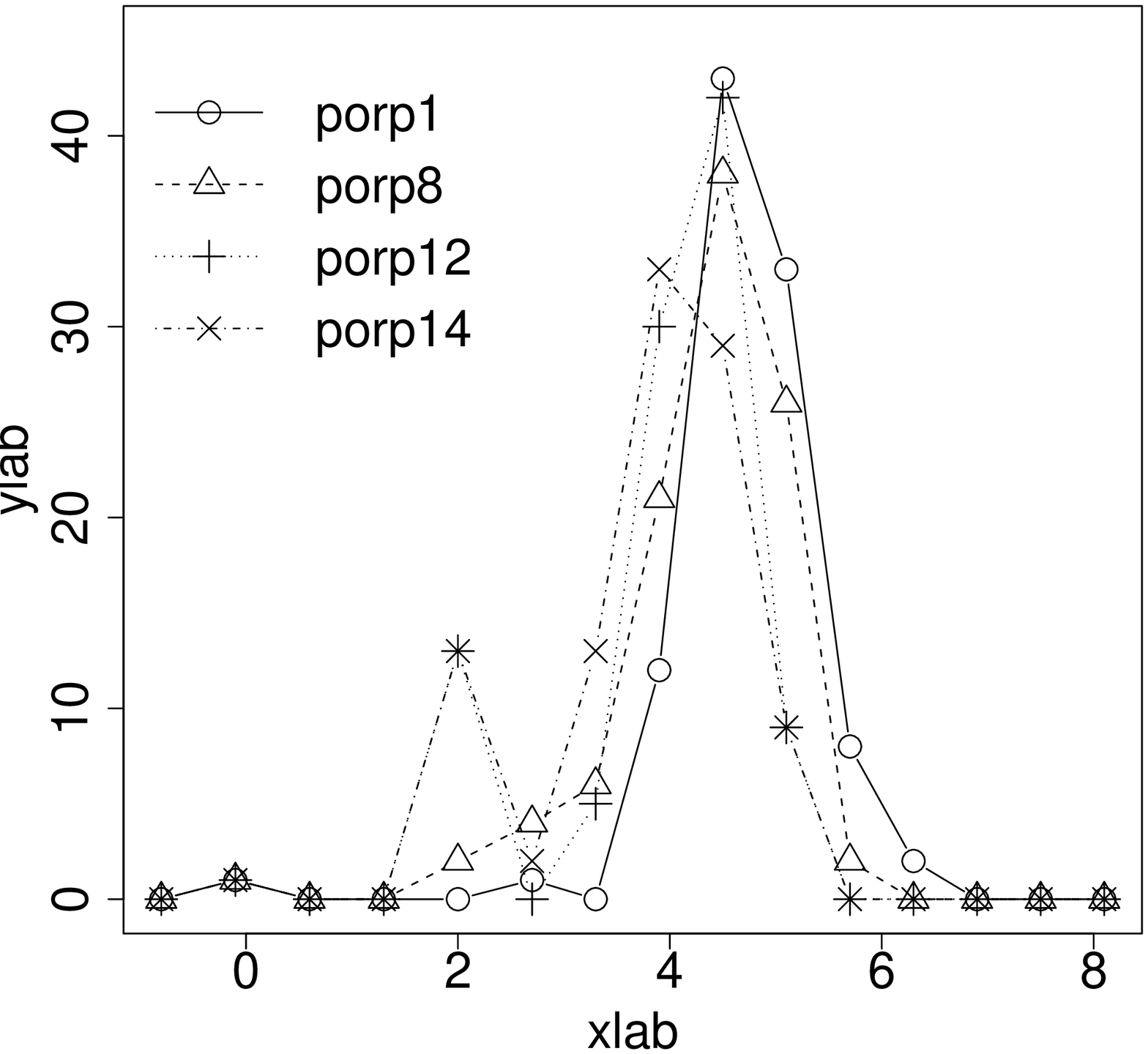}
}
\hspace{1cm}
\subfigure[Histogram of the base-10 logarithmic real-time needed to 
obtain a relative error less than 1\%.]{\label{importance_fig_m1_a2_hist_times}
\psfrag{0}{{\tiny 0}}
\psfrag{10}{{\tiny 10}}
\psfrag{20}{{\tiny 20}}
\psfrag{30}{{\tiny 30}}
\psfrag{40}{{\tiny 40}}
\psfrag{50}{{\tiny 50}}
\psfrag{60}{{\tiny 60}}
\psfrag{70}{{\tiny 70}}
\psfrag{-1.5}{{\tiny -1.5}}
\psfrag{-0.5}{{\tiny -0.5}}
\psfrag{0.5}{{\tiny 0.5}}
\psfrag{1.5}{{\tiny 1.5}}
\psfrag{2.5}{{\tiny 2.5}}
\psfrag{porp1}{{\scriptsize $\QGT$}}
\psfrag{porp8}{{\scriptsize $\QSingleHUW$}}
\psfrag{porp12}{{\scriptsize $\QPairHUWoneStepB$}}
\psfrag{porp14}{{\scriptsize $\QPairHUWtwoStepA$}}
\psfrag{xlab}{log($t$)}
\psfrag{ylab}{count}
\includegraphics[scale=.3]{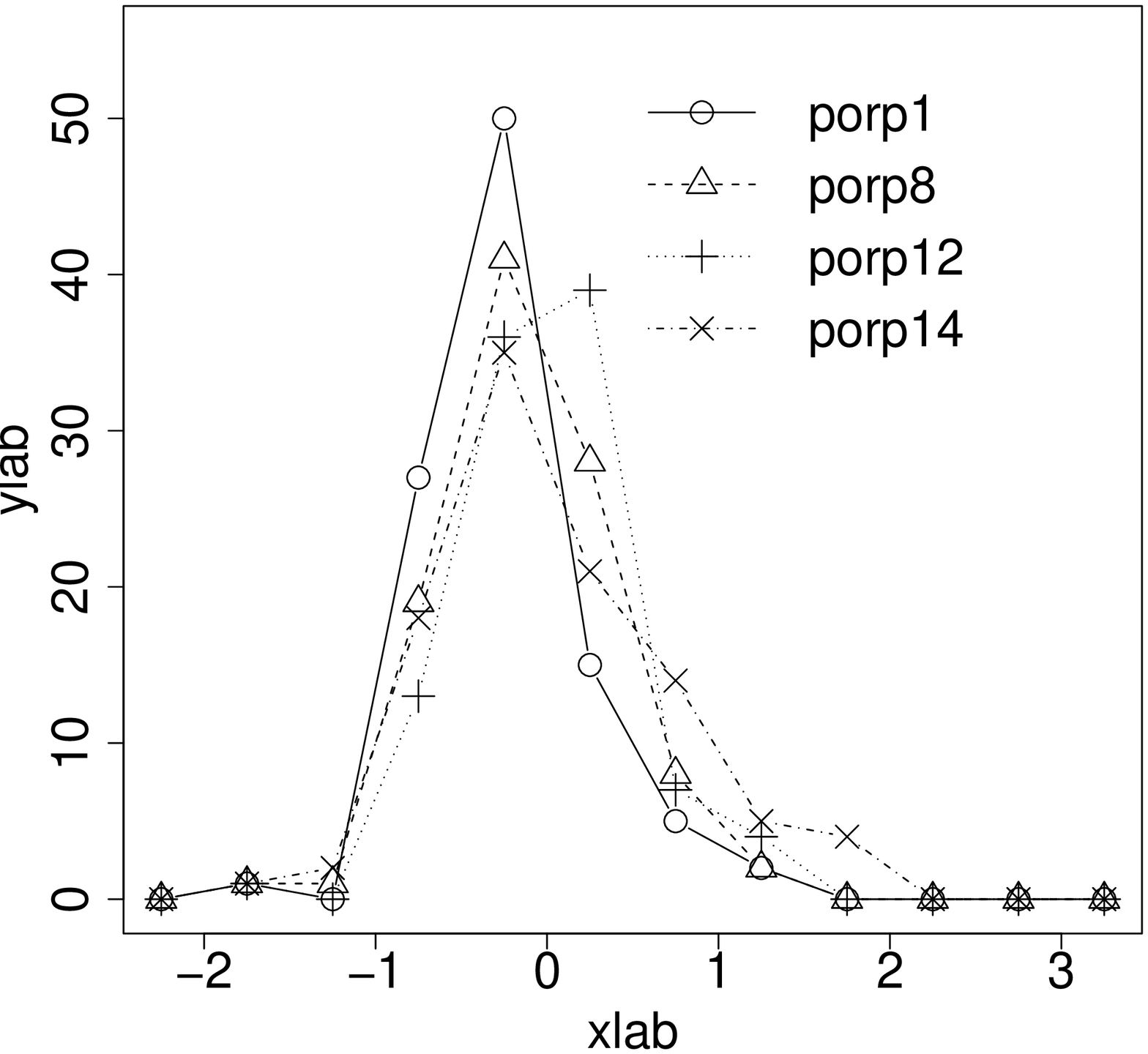}
}
\end{center}
\caption{Empirical distributions for the number of runs and the real-time for
100 samples of size 15 with, simulated with $r=1$ and $\alpha=2$. Again, the
likelihood was computed for the same parameters.}
\label{importance_fig_m1_a2_hist}
\end{figure}

\subsection{Performance on real data sets}
\label{sec_real_data}

So far we have only dealt with simulated datasets of relatively small complexity. 
We now analyse the performance of our methods on various real datasets.

We begin with a famous and well-studied dataset consisting of 
mitochondrial data sampled by Ward et.~al.\ (\cite{Ward1991})
from the North American {\em Nuu Chah Nulth} tribe. The corresponding genetree is given in Figure~\ref{fig_ncn_gt94}. 
These samples were analysed in a framework similar to ours in \cite{Griffiths1994} and \cite{Hobolth2008}, 
and we use the data in the form edited by Griffiths and Tavar\'e in \cite[Figure~3]{Griffiths1994}. 
We first estimated the maximum likelihood values for the mutation rate $r$ 
and the parameter for the Beta-coalescent $\alpha$ on a discrete grid. The values are given in Table~\ref{tab_real_performance}.
Details of this method
and possible biological implications will be discussed elsewhere. 
We then used the estimated parameters to perform the same analysis as in Section~\ref{sec_specific_structures}, that is we determined the number of independent runs and the computing time to estimate the likelihood value at this point in the parameter space with a relative error below $1\%$. The result is given in Figure~\ref{fig_first} (cf. the symbol related to
\cite{Griffiths1994}).
Again the proposal distributions using pairs of mutations show good performance when the number of runs is considered. However, this advantage almost vanishes when the total computation time is considered.
Still, $\QPairHUWoneStepB$ performs best. 

Currently, evolutionary mechanisms to describe actual biological populations which might 
give rise to Lambda-coalescent like genealogies (see e.g.~\cite{Eldon2006}) 
are being discussed. In this subsection we will further compare the performance of our methods on the 
datasets considered in \cite{Arnason2004}, namely mitochondrial cytochrome {\em b} DNA variation data sampled from various subpopulations of Atlantic Cod {\em (Gadus Morhua)}. These datasets, depicted in Figure~\ref{fig_cod1} and Figure~\ref{fig_cod2}, are taken from \cite{Arnason1996}, \cite{Arnason1998} (only from the Baltic transition area), \cite{Arnason2000} (only the Greenland subsample), \cite{Carr1991}, \cite{Pepin1993} and \cite{Sigurgislason2003} (only cyt {\em b} data).

Again, we estimated the maximum likelihood values for the mutation rate $r$ and the 
parameter for the Beta-coalescent $\alpha$ on a discrete grid and proceeded in a similar 
way as for the Nuu Chah Nulth data. The estimated parameter values are given in Table~\ref{tab_real_performance} and Figure~\ref{fig_first}\footnote{The analysis for \cite{Griffiths1994} under $\QPairHUWoneStepA$ showed a relative error of 7 \% after 27 million runs taking 32 days.} and Figure~\ref{fig_second} show the results of the runtime analysis.

Again the proposal distributions using pairs of mutations show a strong performance when the number of runs is considered. However, this advantage vanishes when the computation time is considered, 
where for some samples $\QSD$ and $\QSingleHUW$ even perform better. 
To some extend this can be attributed to the increased effort the proposal 
distributions using pairs of mutations have to invest in the precalculation.

\begin{figure}
\begin{center}
\psfrag{1}{$\scriptstyle 1$}
\psfrag{2}{$\scriptstyle 2$}
\psfrag{3}{$\scriptstyle 3$}
\psfrag{4}{$\scriptstyle 4$}
\psfrag{5}{$\scriptstyle 5$}
\psfrag{8}{$\scriptstyle 8$}
\psfrag{19}[Bl][Bc]{$\scriptstyle 19$}
\fbox{\includegraphics[height=3cm, width=4.5cm]{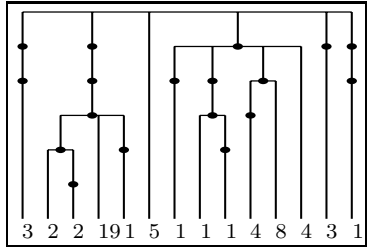}}
\end{center}
\caption{The genetree corresponding to the dataset from \cite{Griffiths1994}.}
\label{fig_ncn_gt94}
\end{figure}

\begin{figure}
\begin{center}
\psfrag{1}{$\scriptstyle 1$}
\psfrag{2}{$\scriptstyle 2$}
\psfrag{3}{$\scriptstyle 3$}
\psfrag{4}{$\scriptstyle 4$}
\psfrag{6}{$\scriptstyle 6$}
\psfrag{7}{$\scriptstyle 7$}
\psfrag{8}{$\scriptstyle 8$}
\psfrag{12}[Bl][Bc]{$\scriptstyle 12$}
\psfrag{14}[Bl][Bc]{$\scriptstyle 14$}
\psfrag{19}{$\scriptstyle 19$}
\psfrag{25}[Bl][Bc]{$\scriptstyle 25$}
\psfrag{35}{$\scriptstyle 35$}
\psfrag{48}{$\scriptstyle 48$}
\psfrag{62}[Bl][Bc]{$\scriptstyle 62$}
\subfigure[\lbrack AP96\rbrack] {\label{fig_cod1_arnason1996}
\fbox{\includegraphics[height=3cm, width=4.5cm]{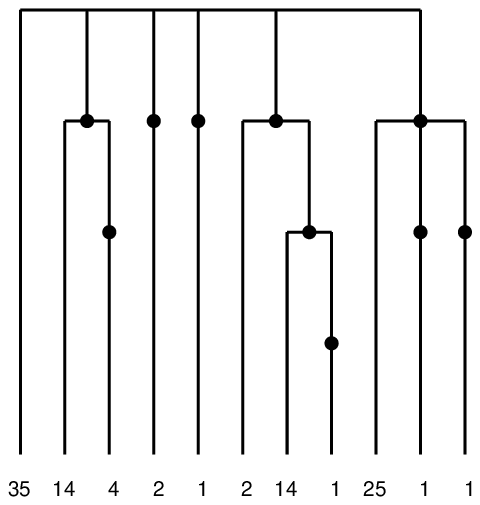}}
}
\subfigure[\lbrack APP98\rbrack] {\label{fig_cod1_arnason1998}
\fbox{\includegraphics[height=3cm, width=4.5cm]{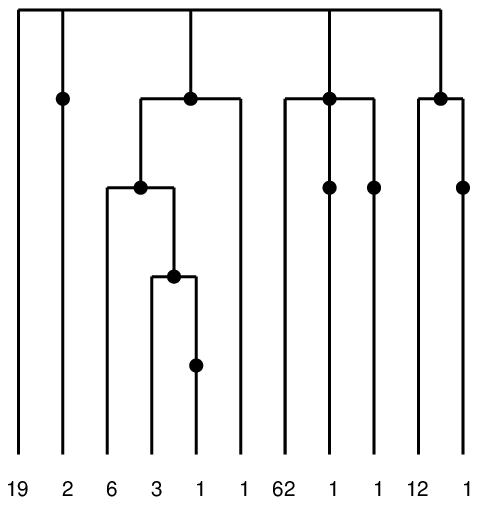}} 
}
\subfigure[\lbrack APKS00\rbrack] {\label{fig_cod1_arnason2000}
\fbox{\includegraphics[height=3cm, width=4.5cm]{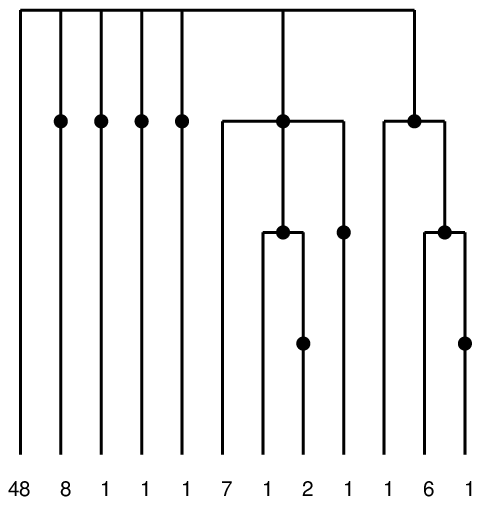}} 
}
\end{center}
\caption{The genetrees corresponding to the datasets from \cite{Arnason1996}, \cite{Arnason1998} and \cite{Arnason2000}.}
\label{fig_cod1}
\end{figure}

\begin{figure}
\begin{center}
\psfrag{1}{$\scriptstyle 1$}
\psfrag{2}{$\scriptstyle 2$}
\psfrag{3}{$\scriptstyle 3$}
\psfrag{4}{$\scriptstyle 4$}
\psfrag{6}{$\scriptstyle 6$}
\psfrag{10}[Bl][Bc]{$\scriptstyle 10$}
\psfrag{11}[Bl][Bc]{$\scriptstyle 11$}
\psfrag{13}[Bl][Bc]{$\scriptstyle 13$}
\psfrag{26}{$\scriptstyle 26$}
\psfrag{36}{$\scriptstyle 36$}
\psfrag{84}{$\scriptstyle 84$}
\subfigure[\lbrack CM91\rbrack] {\label{fig_cod2_carr1991}
\fbox{\includegraphics{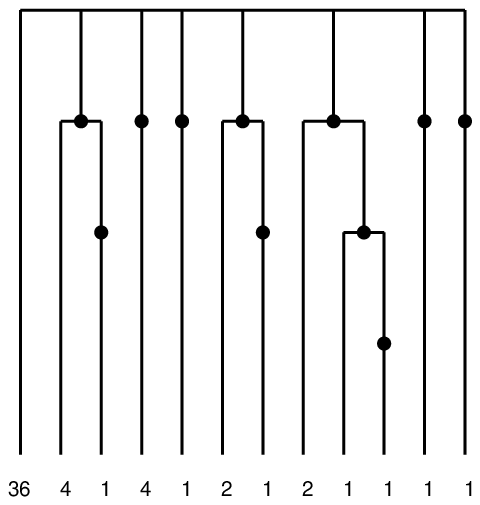}}
}
\subfigure[\lbrack PC93\rbrack] {\label{fig_cod2_pepin1993}
\fbox{\includegraphics{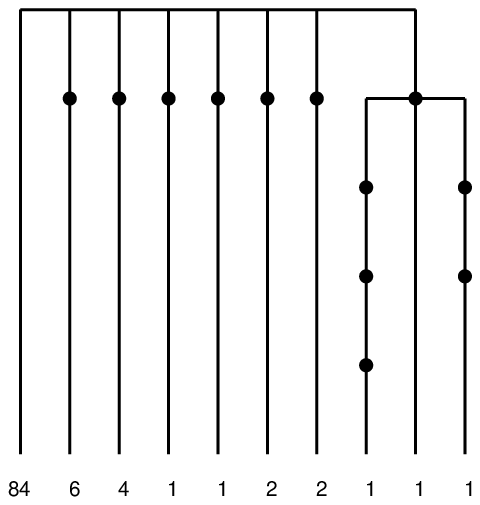}} 
}
\subfigure[\lbrack SA03\rbrack] {\label{fig_cod2_sigurgislason2003}
\fbox{\includegraphics{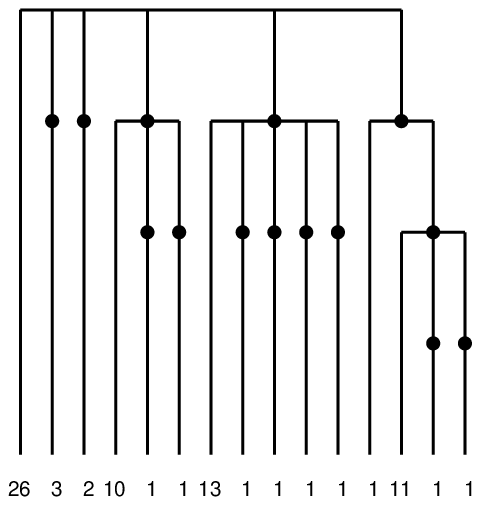}} 
}
\end{center}
\caption{The genetrees corresponding to the datasets from \cite{Carr1991}, \cite{Pepin1993} and \cite{Sigurgislason2003}.}
\label{fig_cod2}
\end{figure}

\begin{table}[h!]
\begin{center}
\scriptsize
\begin{tabular}{||c||r|r|r|r|r|r|r||}
\hline\hline
& \cite{Griffiths1994} & \cite{Arnason1996} & \cite{Arnason1998} & \cite{Arnason2000} & \cite{Carr1991} & \cite{Pepin1993} & \cite{Sigurgislason2003} \\\hline\hline
$n$ & 55 & 100 & 109 & 78 & 55 & 103 & 74\\
\hline
$(\hat{r},\hat{\alpha})$ & (2.4, 2.0) & (0.7, 1.65) & (0.6, 1.55) & (0.7, 1.65) & (0.8, 1.4) & (0.6, 1.4) & (0.7, 1.3) \\
\hline
$p^0[{\bf t},{\bf n}]$ & $9.02 \cdot 10^{-20}$ & $2.25 \cdot 10^{-13}$ & $2.19 \cdot 10^{-14}$ & $2.26 \cdot 10^{-12}$ & $3.80 \cdot 10^{-9}$ & $1.64 \cdot 10^{-10}$ & $6.44 \cdot 10^{-13}$ \\
\hline
$c({\bf t},{\bf n})$ & 1 & 2 & 2 & 6 & 6 & 4 & 96 \\
\hline
$p[{\bf t},{\bf n}]$ & $9.02 \cdot 10^{-20}$ & $1.13 \cdot 10^{-13}$ & $1.10 \cdot 10^{-14}$ & $3.77 \cdot 10^{-13}$ & $6.33 \cdot 10^{-10}$ & $4.10 \cdot 10^{-11}$ & $6.71 \cdot 10^{-15}$ \\
\hline\hline
\end{tabular}
\caption{True probabilities $p[{\bf t},{\bf n}]$ under 
estimated ML parameters (within the Beta$(2-\alpha,\alpha)$-class; 
MLE on a discrete grid)
combinatorial factors $c({\bf t},{\bf n})$, and likelihoods $p[{\bf t},{\bf n}]$ for the real datasets.}
\label{tab_real_performance}
\end{center}
\end{table}

\begin{figure}
\begin{center}
\psfrag{p1}[cr][c][1][270]{{\scriptsize $\QGT$}}
\psfrag{p7}[cr][c][1][270]{{\scriptsize $\QSD$}}
\psfrag{p8}[cr][c][1][270]{{\scriptsize $\QSingleHUW$}}
\psfrag{p10}[cr][c][1][270]{{\scriptsize $\QPairHUWoneStepA$}}
\psfrag{p12}[cr][c][1][270]{{\scriptsize $\QPairHUWoneStepB$}}
\psfrag{p14}[cr][c][1][270]{{\scriptsize $\QPairHUWtwoStepA$}}
\psfrag{p15}[cr][c][1][270]{{\scriptsize $\QPairHUWtwoStepB$}}
\psfrag{p18}[cr][c][1][270]{{\scriptsize $\QPairHUWmixed$}}
\psfrag{nuh_chah_nulth}{{\tiny \cite{Griffiths1994}}}
\psfrag{arnason1996}{{\tiny \cite{Arnason1996}}}
\psfrag{arnason1998}{{\tiny \cite{Arnason1998}}}
\psfrag{xlab}[cc]{{\footnotesize Distribution}}
\psfrag{ylab}[cc]{{\footnotesize log(\#)}}
\subfigure[Base-10 logarithm of the number of runs needed ($\log(\#)$).] {\label{fig_first_runs}
\includegraphics[scale=.40]{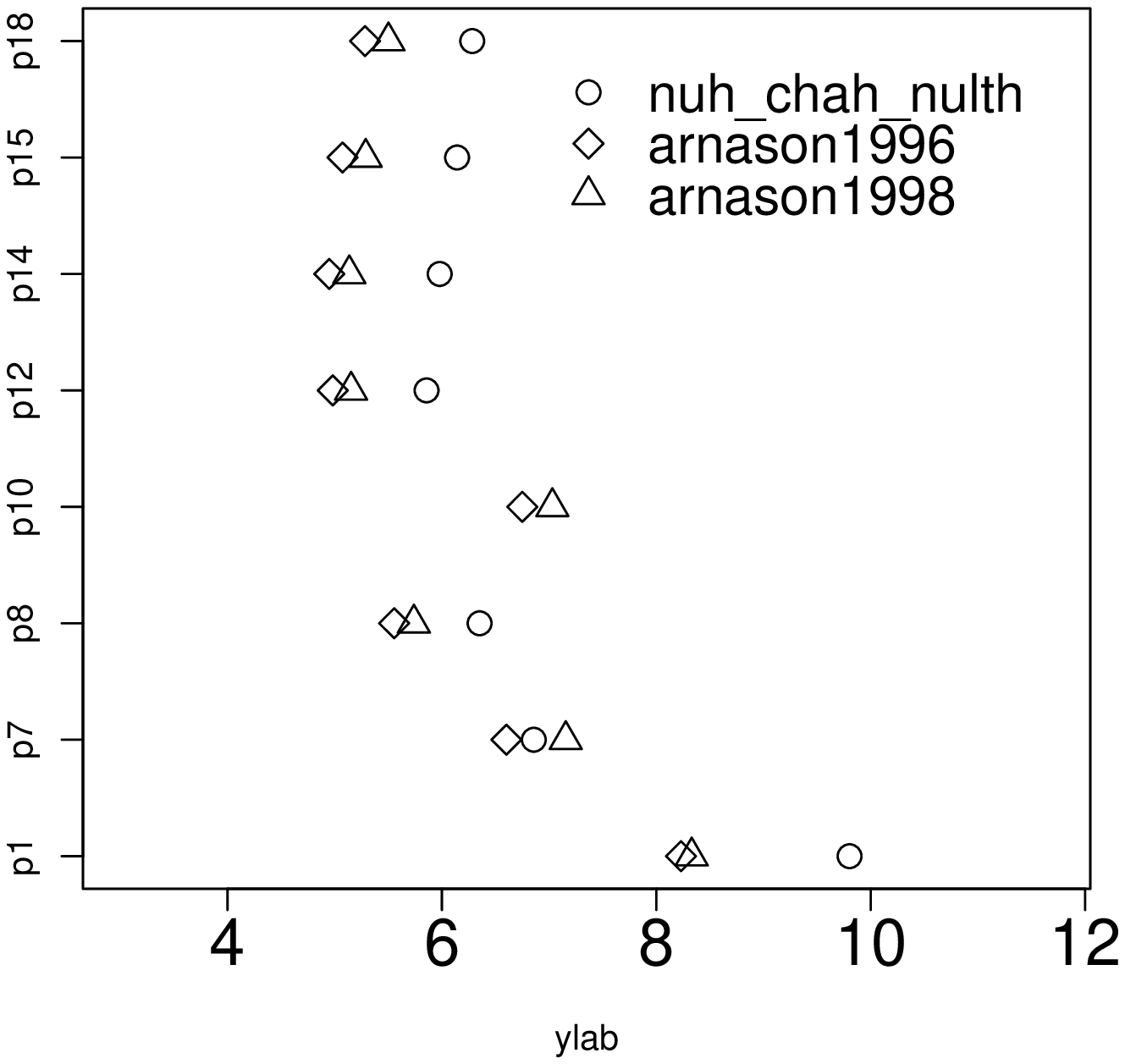}}
\psfrag{ylab}[cc]{{\footnotesize log($t$)}}
\subfigure[Base-10 logarithm of the computing time needed in seconds ($\log(t)$).] {\label{fig_first_times}\ \hspace{0.5cm}\includegraphics[scale=.40]{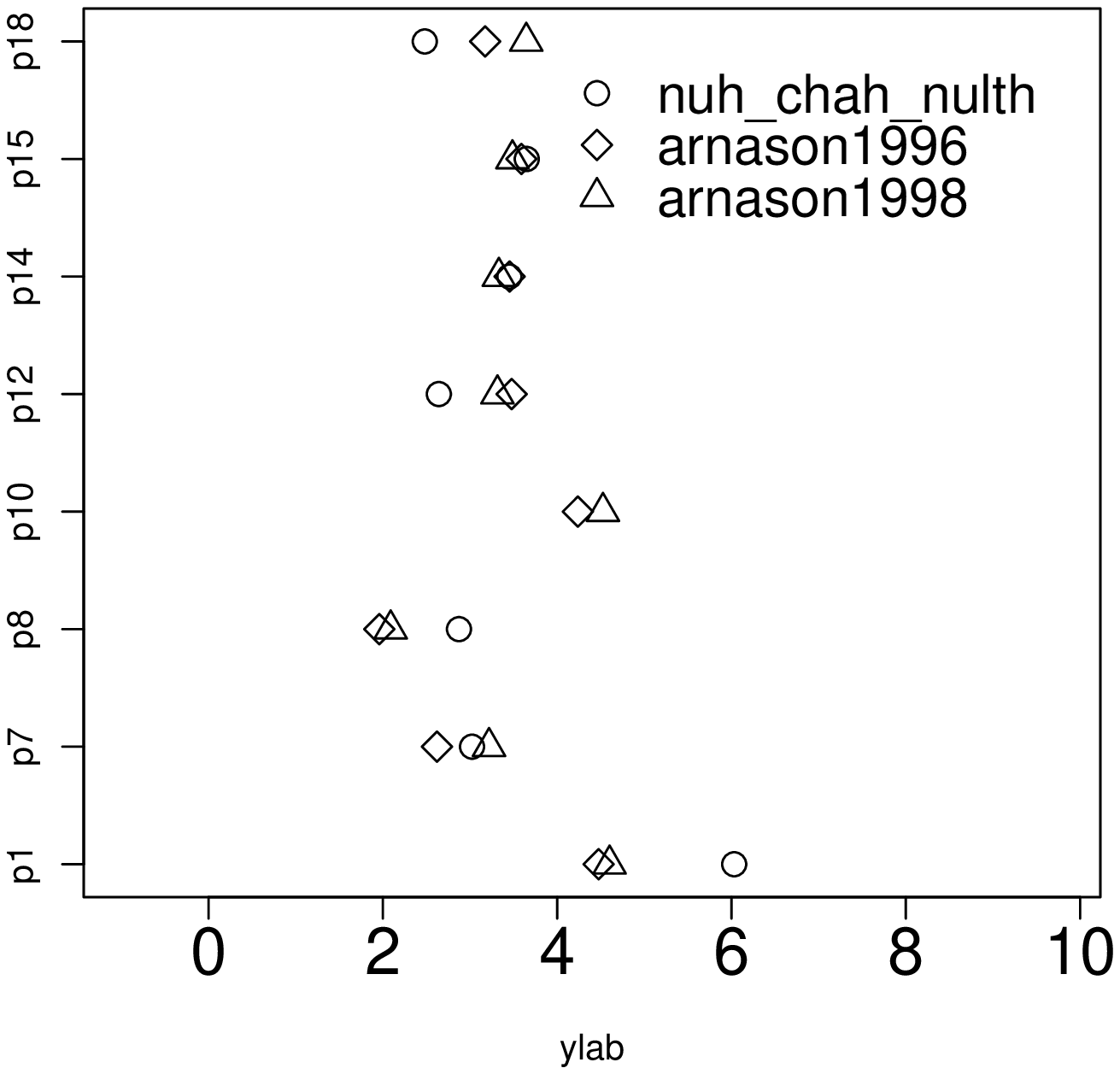}}
\end{center}
\caption{\small Number of runs and computing time needed to obtain a relative error below 1\% for the genetrees corresponding to the datasets from \cite{Griffiths1994}, \cite{Arnason1996} and \cite{Arnason1998} given in Figure~\ref{fig_ncn_gt94} and~\ref{fig_cod1}.}
\label{fig_first}
\end{figure}

\begin{figure}
\begin{center}
\psfrag{p1}[cr][c][1][270]{{\scriptsize $\QGT$}}
\psfrag{p7}[cr][c][1][270]{{\scriptsize $\QSD$}}
\psfrag{p8}[cr][c][1][270]{{\scriptsize $\QSingleHUW$}}
\psfrag{p10}[cr][c][1][270]{{\scriptsize $\QPairHUWoneStepA$}}
\psfrag{p12}[cr][c][1][270]{{\scriptsize $\QPairHUWoneStepB$}}
\psfrag{p14}[cr][c][1][270]{{\scriptsize $\QPairHUWtwoStepA$}}
\psfrag{p15}[cr][c][1][270]{{\scriptsize $\QPairHUWtwoStepB$}}
\psfrag{p18}[cr][c][1][270]{{\scriptsize $\QPairHUWmixed$}}
\psfrag{arnason2000}{{\tiny \cite{Arnason2000}}}
\psfrag{carr1991}{{\tiny \cite{Carr1991}}}
\psfrag{pepin1993}{{\tiny \cite{Pepin1993}}}
\psfrag{sigurgislason2003}{{\tiny \cite{Sigurgislason2003}}}
\psfrag{xlab}[cc]{{\footnotesize Distribution}}
\psfrag{ylab}[cc]{{\footnotesize log(\#)}}
\subfigure[Base-10 logarithm of the number of runs needed ($\log(\#)$).] {\label{fig_second_runs}
\includegraphics[scale=.40]{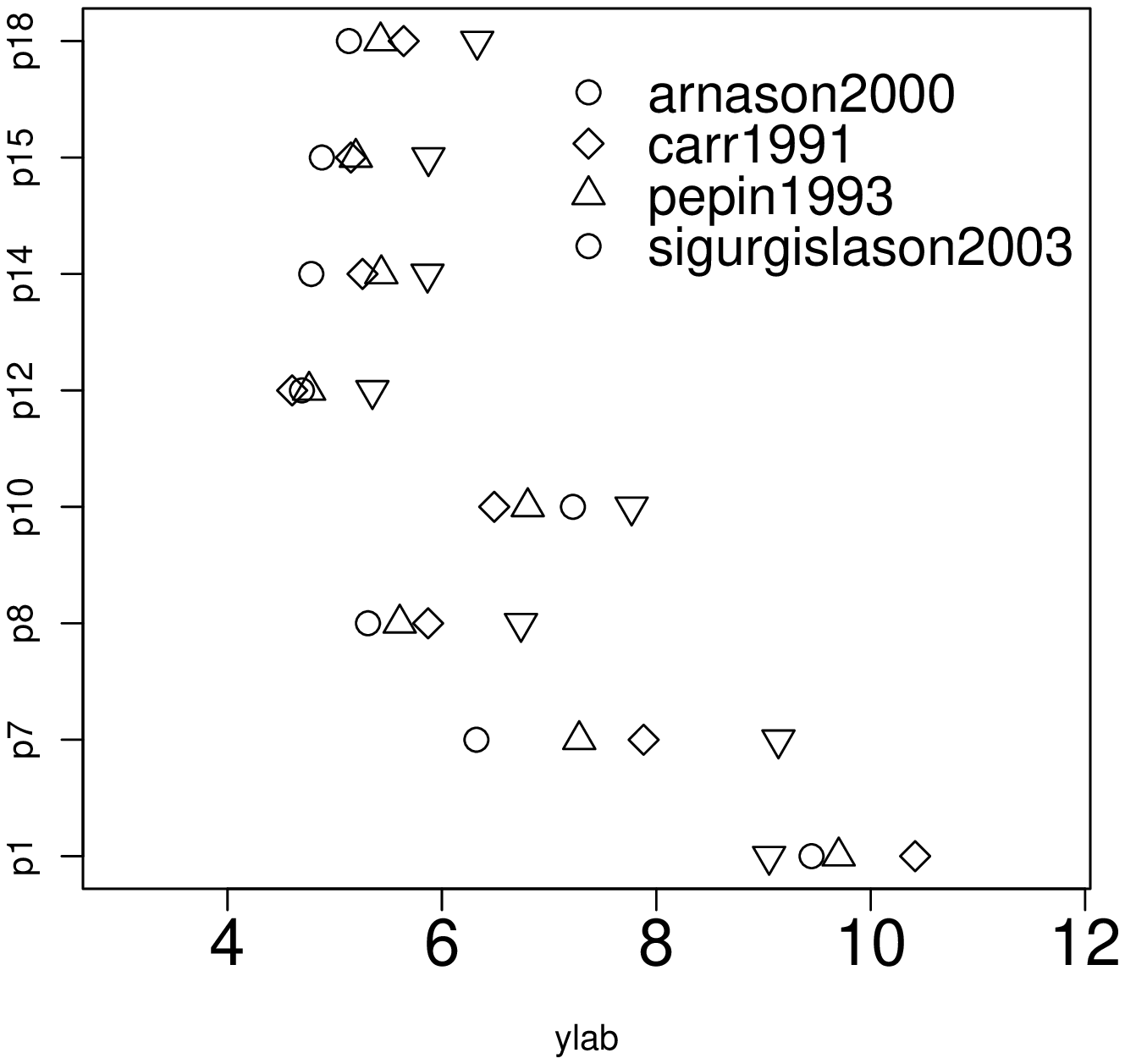}}
\psfrag{ylab}[cc]{{\footnotesize log($t$)}}
\subfigure[Base-10 logarithm of the computing time needed in seconds ($\log(t)$).] {\label{fig_second_times}\ \hspace{0.5cm}\includegraphics[scale=.40]{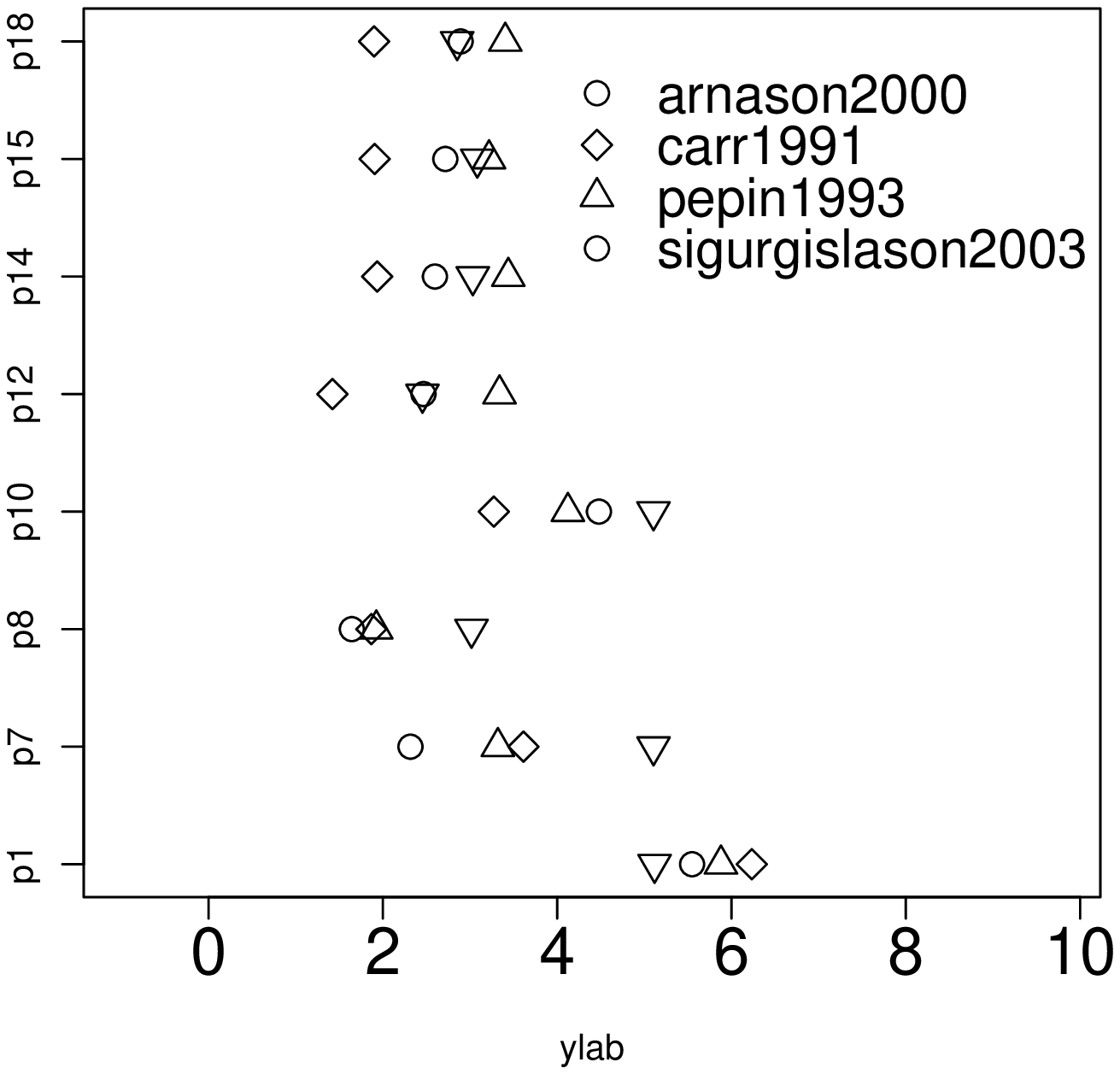}}
\end{center}
\caption{\small Number of runs and computing time needed to obtain a relative error below 1\% for the genetrees corresponding to the datasets from \cite{Arnason2000}, \cite{Carr1991}, \cite{Pepin1993} and \cite{Sigurgislason2003} given in Figure~\ref{fig_cod1} and~\ref{fig_cod2}.}
\label{fig_second}
\end{figure}

\subsection{Conclusion and guidelines for the practitioner}

Table~\ref{tb:dist} shows that for a wide range of parameters and samples with complexity~15, the proposal distributions using pairs of mutations are typically closer to the optimal proposal distribution than
the proposal distributions using less detailed information from the sample. 
The distribution $\QSD$ performs better then the 'standard' $\QGT$, but is in turn outperformed by $\QSingleHUW$. 
This relative ranking of distributions in principle holds throughout the subsequent 
analysis in Sections~\ref{sec_specific_structures}, \ref{sec_average_performance} and \ref{sec_real_data}. 
The proposal distributions using pairs of mutations perform consistently better than the others 
when the number of independent runs is considered. Note that $\QPairHUWoneStepA$ is an exception to this rule 
(this fits to the observation from p.~\pageref{page:someone_underrates_mutation_events} that $\QPairHUWoneStepA$ underrates mutation events). 

However, when considering overall computation time, this clear picture changes. 
Though the methods using compressed genetrees still outperform $\QGT$ and $\QSD$ in most cases,
$\QSingleHUW$ shows a performance comparable to our methods using pairs of mutations. 
On the one hand this can be attributed to the actual
implementation, on the other hand the computational complexities per proposal step for different
proposal distributions do differ, ranging from 
constant 
($\QGT$ and $\QSD$) 
to linear ($\QSingleHUW$ and $\QPairHUWoneStepB$) or quadratic in the number
of segregating sites. When real computation time is considered, the proposal 
distribution $\QSingleHUW$ seems to make up for the lack in accuracy by smaller computation time for each step when compared to the pair-wise methods.

A further increase in the 
runtime of the proposal distributions regarding pairs of mutations needs to be attributed  
to the fact that they require precalculation
of all steps in all samples with up to two segregating sites. 
When analysing the more complex real data sets of the previous section, 
this precalculation becomes a substantial component of the total computing time. 
For example, 
our current implementation needed about 38 seconds for the precalculation of samples of size 50, 
but this rapidly increases to 4250 seconds for samples of size 100. 
In contrast, the proposal distribution $\QSingleHUW$ only
requires precalculations for samples with one or zero segregating sites, 
which is negligible for samples of size 100.

Still, if a sample configuration can be analysed by the proposal
distribution $\QPairHUWoneStepB$, then this proposal distribution
yields a good performance. Furthermore, when several datasets are to be 
analysed, the program {\tt MetaGeneTree} allows to save computing time by storing 
the precalculated optimal proposal weights in a file.

In conclusion one can say that the methods using compressed genetrees present 
an improvement over 
the `canonical candidate' $\QGT$ or the heuristic generalisation of Stephens and Donnelly's idea for the Lambda-case, $\QSD$. 
For small to moderate sizes the pair-wise methods perform rather well with $\QPairHUWoneStepB$ outperforming every other method. 

In general, which proposal distribution works best in terms of real-time requirements depends on the particular data set and the parameters. 
Thus, for larger datasets, we recommend a small preparatory study to 
test the performance of the various methods. 
This can easily be done with {\tt MetaGeneTree}.

\color{black}

\section*{Acknowledgement}

The research of 
M.S.\ was supported in part by a DFG IRTG 1339 scholarship 
and NIH grant R00-GM080099. 
M.B.\ would like to thank Asger Hobolth for a very stimulating discussion 
which initiated this research. 


\color{black}


\appendix
\vspace{3ex}\noindent\Large{{\bf Appendix}}
\setcounter{section}{1}
\label{s:app}

\subsection{Generating samples: Details} 
\label{sect:algo}

The following is adapted from \cite[Section~7]{Birkner2008}. 
Let $\{ \Pi_t\}_{t \ge 0}$ be a $\Lambda$-coalescent. We denote by
$\{Y_t\}_{t \ge 0}$ the corresponding {\em block counting process},
i.e.\ $Y_t= \#\{\text{blocks of $\Pi_t$}\}$  is a continuous-time Markov chain on $\N$ with jump rates
\[ 
q_{ij} = {i \choose i-j+1} \lambda_{i, i-j+1}, \quad i > j \ge 1.
\]
The total jump rate 
while in $i$ is of course $-q_{ii} = \sum_{j=1}^{i-1} q_{ij}$. 
We write 
\begin{equation} 
p_{ij} := \frac{q_{ij}}{-q_{ii}}
\end{equation}
for the jump probabilities of the {\em skeleton chain}, noting that
$(p_{ij})$ is a stochastic matrix.
Note that in order to reduce $i$ classes to $j$ classes, an
$i-j+1$-merger has to occur.
Let 
\begin{equation} 
\label{blue}
g(n,m) := \E_n \bigg[ \int_0^\infty {\bf 1}_{\{Y_s = m \}} \, ds \bigg] 
\quad \mbox{for $n \ge m \ge 2$} 
\end{equation}
be the expected amount of time that $Y$, starting from $n$, spends in $m$.
Decomposing according to the first jump of $Y$, we find the following 
set of equations for $g(n,m)$: 
\begin{eqnarray} 
\label{eq:GFrec1}
g(n,m) & = & \sum_{k=m}^{n-1} p_{nk}
g(k,m), \quad 
n > m \ge 2, \\
\label{eq:GFrec2}
g(m,m) & = & \frac{1}{-q_{mm}}, \quad m \ge 2.  
\end{eqnarray} 
Let
us write $Y^{(n)}$ for the process starting from $Y^{(n)}_0 = n$.  Let
$\tau := \inf\{ t : Y^{(n)}_t = 1\}$ be the time required to come down
to only one class, and let
\[
\tilde{Y}^{(n)}_t := Y^{(n)}_{(\tau-t)-}, \quad 0 \le t < \tau
\] 
be the time-reversed path, where we define $\tilde{Y}^{(n)}_t =
\partial$, some cemetery state, when $t \ge \tau$.

With the above definitions, $\tilde{Y}^{(n)}$ is a continuous-time
Markov chain on $\{2,\dots,n\} \cup \{\partial\}$ with jump rates
\begin{equation} 
\label{eq:reversedjumprates}
\tilde{q}^{(n)}_{ji} = \frac{g(n,i)}{g(n,j)}q_{ij}, \quad j < i \le n, 
\end{equation}
and $\tilde{q}^{(n)}_{n\partial} = -q_{nn}$, where $g(n,m)$ is as in (\ref{blue}).
The {\em  starting distribution} of $\tilde{Y}^{(n)}$ is given by 
\[ 
\Pr \{ \tilde{Y}^{(n)}_0 = k \} = g(n,k) q_{k1},
\]
for each $k$.
We write $|{\bf n}|:=\sum_{i=1}^d n_i$, and denote
$\tilde q_k^{(n)} := -\tilde q_{kk}^{(n)}$. 

{\coleins \smallskip
Note that 
\begin{equation} 
\label{eq:reversedjumprate1}
{\colzwo \tilde q}_k^{(n)} = -q_{kk}, \quad 2 \leq k \leq n, 
\end{equation} 
i.e., the total jump rate of $\tilde{Y}^{(n)}$ in state $k \leq n$
does not depend on $n$.  \eqref{eq:reversedjumprate1} follows from the
observation that by monotonicity of paths, the set of times that $Y^{(n)}$
(and thus $\tilde{Y}^{(n)}$) spends in a given state $k$ is a.s.\ an
interval (possibly empty), thus
\begin{align*} 
\frac{1}{-q_{kk}} = \frac{\E \Big[ \int_0^\infty {\bf 1}_{\{Y^{(n)}_s = k \}} 
\, ds \Big]}{\P\big\{ \exists \,s \; : \: Y^{(n)} = k \big\}} 
= \frac{\E \Big[ \int_0^\infty {\bf 1}_{\{\tilde{Y}^{(n)}_s = k \}} 
\, ds \Big]}{\P\big\{ \exists \,s \; : \: \tilde{Y}^{(n)} = k \big\}} 
= \frac{1}{{\colzwo \tilde q}_k^{(n)}}
\end{align*}
because the hitting probability and the length of the time interval 
spent in $k$ are the same for the path $Y^{(n)}$ and its time-reversal.
\smallskip

Let $(\bar{Y}^{(n)}_\ell)_{\ell=0,1,2,\dots}$ be the skeleton chain of
the time-reversed block counting process. We parametrise time for
$\bar{Y}^{(n)}$ in such a way that $\bar{Y}^{(n)}_0=1$ and
$\bar{Y}^{(n)}_1=\tilde{Y}^{(n)}_0$.  Thus, $\bar{Y}^{(n)}$ is a
Markov chain on $\{1,2,\dots,n\} \cup \{\partial\}$ with transition
matrix $\bar{p}^{(n)}_{1 k}=g(n,k) q_{k1}$ ($2 \leq k \leq n$),
$\bar{p}^{(n)}_{j i}= \tilde{q}^{(n)}_{ji}/\tilde q_j^{(n)}$ ($2 \leq
j < i \leq n$),
$\bar{p}^{(n)}_{n,\partial}=1=\bar{p}^{(n)}_{\partial,\partial}$.
\smallskip

The time-reversed block counting processes corresponding 
to different `target' sample sizes are related as follows: 
For $n_1 < n_2$ and any 
$\ell_0=1< \ell_1 < \cdots < \ell_m \leq n_1$, we have 
\begin{equation}
\label{eq:comparereversedpathprob1}
\P\big\{\bar{Y}^{(n_1)}_i = \ell_i, i=0,\dots,m\big\} = 
\frac{g(n_1,\ell_m)}{g(n_2,\ell_m)}
\P\big\{\bar{Y}^{(n_2)}_i = \ell_i, i=0,\dots,m\big\},
\end{equation}
in particular, for $\ell \leq n_1 \leq n_2$, 
\begin{equation} 
\label{eq:eq:comparereversedhittingprob}
g(n_2, \ell)
\P\big\{\bar{Y}^{(n_1)} \; \mbox{hits $\ell$} \big\} 
= g(n_1,\ell)
\P\big\{\bar{Y}^{(n_2)} \; \mbox{hits $\ell$} \big\}.
\end{equation}
To see \eqref{eq:comparereversedpathprob1} note that 
for $1=\ell_0 < \cdots < \ell_m \leq n$ 
\begin{align*}
g(n,\ell_1) q_{\ell_1,1} \prod_{i=1}^{m-1} \tilde{q}^{(n)}_{\ell_i \ell_{i+1}}
= g(n,\ell_1) q_{\ell_1,1} \prod_{i=1}^{m-1} 
\frac{g(n,\ell_{i+1})}{g(n,\ell_{i})} q_{\ell_{i+1} \ell_i} 
= g(n,\ell_{m}) \prod_{i=0}^{m-1} q_{\ell_{i+1} \ell_i}, 
\end{align*}
dividing both sides by $\prod_{i=1}^{m-1} \tilde{q}^{(n)}_{\ell_i} 
= \prod_{i=1}^{m-1} (-q_{\ell_i \ell_i})$ gives 
\begin{align*} 
\prod_{i=0}^{m-1} \bar{p}^{(n)}_{\ell_i \ell_{i+1}} = 
g(n,\ell_{m}) q_{\ell_{m} \ell_{m-1}} \prod_{i=0}^{m-2} p_{\ell_{i+1} \ell_i} 
= \frac{g(n,\ell_{m})}{-q_{\ell_m \ell_m}} 
\prod_{i=0}^{m-1} p_{\ell_{i+1} \ell_i}. 
\end{align*}
}

\medskip

The law of the sequence $(Z_0:=([(0)]_\sim, (\{1\})), Z_1,\dots,Z_c)$
generated by Algorithm~\ref{appendix_algorithm_sample} is that of the sample histories described in
Section~\ref{sseq:iii}. Note that it agrees with
\cite[Algorithm~1]{Birkner2008} except for the way the ordering of the
types is generated.
\medskip 

\begin{algorithm}
\begin{minipage}{\textwidth}
\noindent 
\begin{itemize}
\item[1)] Draw $K$ according to the law of $\tilde Y_0^{(n)}$, i.e.
 $\Pr \{K=k\}=g(n,k)q_{k1}$.  Begin with a single `ancestral
 type' with multiplicity $K$, i.e.~$\mathbf{t}=({\bf x}_1), {\bf
   x}_1=0, 
 {\bf n}=(K)$, and so $d=1$.
 Set $s:=1$. 

 $c:=1$, $Z_c := ({\bf t}, (K))$. 
 \smallskip

\item[2)] Given $Z_c=(\mathbf{t},{\bf n})$ with $d$ types, let $k:=|{\bf n}|$, and draw a
 uniform random variable $U$ on $[0,1]$. 
\begin{itemize}
\item[$\circ$] If $U \le \frac{k r}{k r + \tilde{q}^{(n)}_{k}}$, then 
 draw one type, say $I$, according to the present frequencies. 
 \begin{itemize}
 \item[-] If $n_I = 1$, $Z_{c+1}$ arises from $Z_c$ by replacing $\mathbf{x}_I$ by 
   $(s, x_{I0}, \dots, x_{Ij(I)})$. Increase $s$ by $1$. 
 \item[-] If $n_I > 1$, $Z_{c+1}$ arises from $Z_c$ as follows: 
   Copy $Z_c$, decreasing $n_I$ by one. 
   Then define a new type 
   $\mathbf{x}' = (s, x_{I0}, \dots, x_{Ij(I)})$, 
   draw $J$ uniformly from $\{1,\dots,d+1\}$ and insert $\mathbf{x}'$ with 
   multiplicity one into $Z_{c+1}$ just before the previous type $J$ 
   (with the convention that the new type is placed at the end of $Z_{c+1}$ 
   when $J=d+1$). 

   Increase $s$ and $d$ each by one. 
 \end{itemize}
\item[$\circ$] If $U > \frac{k r}{k r + \tilde{q}^{(n)}_{k}}$, then:
\begin{itemize}
\item[-] If $|{\bf n}|=n$, stop.
\item[-] Otherwise, pick $J \in \{k+1,\dots,n\}$ with $\Pr\{J=j\} =
 {\tilde{q}^{(n)}_{\#{\bf n},j}}/{\tilde{q}^{(n)}_{\#{\bf n}}}$. 
 Copy $Z_{c+1}$ from $Z_c$. 
 Choose one of the present types $I$ (according to their present
 frequency), and add $J-|{\bf n}|$ copies of this type, i.e.~replace
 $n_i := n_i + J-|{\bf n}|$ in $Z_{c+1}$.
\end{itemize}
\end{itemize}
\smallskip 

\item[3)] Increase $c$ by one, repeat 2). 
\smallskip 

\end{itemize}
\end{minipage}
\caption{Algorithm to generate a sample under the $\Lambda$-coalescent in the infinitely many sites model.}
\label{appendix_algorithm_sample}
\end{algorithm}

\subsection{A discussion of the combinatorial factor $c({\bf t},{\bf n})$ appearing in \eqref{eq_relation_ordered_unordered}}
\label{sect:combfactor}

Let ${\bf t}$, ${\bf a}$, ${\bf n}^{({\bf a})} = {\bf n}$, 
and thus also the sample size $n=|{\bf n}|$, the number of 
segregating sites $s$ and the number of different types $d$ 
visible in the sample be given. 
We evaluate $c({\bf t},{\bf n})$ more explicitly, 
using ideas from Griffiths \cite{Griffiths1987}. 

Recall that an unordered unlabelled sample configuration with 
unordered types $[{\bf t},{\bf n}]$ is equivalent to a non-planted 
rooted unlabelled graph-theoretic tree $\tau$ with $n$ leaves and 
$s+1$ internal vertices (a rooted graph-theoretic tree is called \emph{planted} 
if the root node has degree one and \emph{non-planted} otherwise), see 
\cite[Theorem~1]{Griffiths1987}. In this parametrisation, 
the leaves of $\tau=\tau([{\bf t},{\bf n}])$ correspond to the 
(unnumbered) samples, the internal nodes to segregating sites 
(except for the root of $\tau$) and types to internal nodes with at least 
one subtended leaf. By contrast, a given $({\bf t},{\bf n})$ with 
$d$ ordered types can be viewed as such a tree in which the $d$ internal 
nodes with at least one subtended leaf carry distinct numbers from 
$\{1,\dots,d\}$, namely the type numbers. 

The basic observation behind the following lemma is that 
removing the root node (and connecting edges) from a rooted tree leaves a number
of (possibly planted) rooted trees that can be grouped into classes of
isomorphic trees.

\begin{lemma} \label{computingct}
Order the types in $[{\bf t},{\bf n}]$ in some 
arbitrary fashion, yielding $({\bf t},{\bf n})$. 
 Let the root of $\tau=\tau([{\bf t},{\bf n}])$ have 
$k>0$ descendants, $0 \leq \ell \leq k$ of which are leaves. Group the 
subtrees founded by the descendants which are not leaves into
isomorphy classes (isomorphy as rooted trees). Write $r$ for the number of
non-leaf classes and $g_1, \dots, g_r$ for their sizes (in some arbitrary
ordering). Necessarily $g_1+\cdots+g_r=k-\ell$. Call representatives of the 
$r$ different classes $\tau_1$, \dots, $\tau_r$. 
There are
\begin{equation}\label{eq_number_of_permutations}
c({\bf t},{\bf n}) = c(\tau) = \prod_{i=1}^r c(\tau_i)^{g_i} g_i!
\end{equation}
permutations of the type numbers that do not change $\tau$, 
with the empty product interpreted as~$1$, and 
$c({\bf t},{\bf n})$ is defined in \eqref{eq_ctn_definition}.
\end{lemma}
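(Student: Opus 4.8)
The plan is to reinterpret $c({\bf t},{\bf n})$ group-theoretically and then to count recursively. By Griffiths' parametrisation \cite[Theorem~1]{Griffiths1987} (cf.\ the discussion preceding the lemma), giving $({\bf t},{\bf n})$ with $d$ ordered types is the same as giving the rooted tree $\tau=\tau([{\bf t},{\bf n}])$ together with a numbering of its $d$ type vertices (the internal vertices subtending at least one leaf) by $1,\dots,d$, the multiplicities being $n_1,\dots,n_d$. First I would observe that $\sigma\in S_d$ satisfies ${\bf t}\sim{\bf t}_\sigma$ and ${\bf n}={\bf n}_\sigma$ exactly when renumbering the type vertices by $\sigma$ yields an isomorphic numbered tree, i.e.\ when $\sigma$ is induced by an automorphism of the rooted tree $\tau^\flat$ obtained from $\tau$ by deleting the leaf set and recording each type vertex' multiplicity as a decoration. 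So the next step is to check that the natural homomorphism $\mathrm{Aut}(\tau^\flat)\to S_d$ (reading off the action on type vertices) is injective with image exactly the set counted in \eqref{eq_ctn_definition}: surjectivity onto the admissible $\sigma$ is immediate from the definitions (the forced extension to the non-type internal vertices and to the root is precisely what ${\bf t}\sim{\bf t}_\sigma$ encodes), and injectivity holds because an automorphism fixing every type vertex also fixes every non-type internal vertex — one of maximal depth has only type vertices among its children, hence is fixed, and one induces towards the root — as well as the root itself (automorphisms of rooted trees preserve the root). This gives $c({\bf t},{\bf n})=|\mathrm{Aut}(\tau^\flat)|=:c(\tau)$, a quantity depending only on the isomorphism class of $\tau$.

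Then I would run the standard recursion for the order of the automorphism group of a vertex-decorated rooted tree. Any $\phi\in\mathrm{Aut}(\tau^\flat)$ fixes the root and hence permutes the children of the root, sending a child founding a subtree to a child founding an isomorphic subtree. In the lemma's notation, the $\ell$ leaf-children of the root of $\tau$ (the samples of the root type) have disappeared in $\tau^\flat$ and contribute nothing, while the $k-\ell$ non-leaf children, with their subtrees, split into the $r$ isomorphism classes of sizes $g_1,\dots,g_r$, $g_1+\cdots+g_r=k-\ell$. The datum of $\phi$ then amounts to: (i) a permutation of the root's children preserving isomorphism types, which must act within each class, giving $\prod_{i=1}^r g_i!$ choices; and (ii) for each such child $v$ in class $i$, with chosen image $v'$, an isomorphism $\tau_v\to\tau_{v'}$. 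Since the set of isomorphisms between two isomorphic rooted decorated trees is a coset of the automorphism group of either, there are exactly $c(\tau_i)$ of them, so class $i$ contributes $c(\tau_i)^{g_i}$. Conversely any such data assembles — on the leaf-less tree, with no residual freedom — into a well-defined automorphism, and distinct data give distinct automorphisms; multiplying yields $c(\tau)=\prod_{i=1}^r g_i!\,c(\tau_i)^{g_i}$, the empty product handling the base case of a root with only leaf-children ($r=0$, $c(\tau)=1$). A formal induction on the number of internal vertices of $\tau$ finishes the proof.

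The step I expect to be the main obstacle is the first one: establishing cleanly that $c({\bf t},{\bf n})$, defined in \eqref{eq_ctn_definition} as a count of permutations of \emph{type numbers}, equals the automorphism count of the leaf-suppressed tree. The delicate point is that permuting equivalent leaves is a genuine graph automorphism of $\tau$ but \emph{not} a type renumbering, so one must be sure such permutations are neither counted nor lost (passing to $\tau^\flat$ is exactly what removes this ambiguity), and one must treat with care the case where the root is itself a type vertex — it is then fixed, since tree isomorphisms preserve the root, so no inconsistency arises. Once this dictionary is in place, the recursive count in the second step is entirely routine.
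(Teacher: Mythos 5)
Your proposal is correct and follows essentially the same route as the paper's proof: an induction on the size of the tree, decomposing at the root into isomorphy classes of the non-leaf subtrees and multiplying the contributions $c(\tau_i)^{g_i}\,g_i!$. The only difference is that you make explicit the identification of type-number permutations with automorphisms of the leaf-suppressed decorated tree, a dictionary the paper leaves implicit; this adds rigour but not a new idea.
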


\begin{proof}
We prove the statement by induction on the number of nodes in $\tau$
(equivalently, the sample complexity). For a tree with $3$ nodes, 
corresponding to a sample of size $2$ with no mutations, 
Equation~\eqref{eq_number_of_permutations} yields the correct answer $1$. 

Now consider $\tau$,
where the root has $k-\ell$ non-leaf descendants in $r$ classes of sizes
$g_1,\dots,g_r$. For each $i=1,\dots,r$ there are $c(\tau_i)$ ways to permute
the type names without changing $\tau_i$ (viewed as an unnumbered unlabelled 
sample with ordered types). Since there are $g_i$ representatives
of this class attached to the root, this yields $c(\tau_i)^{g_i}$ possibilities.
Additionally, we can interchange the complete set of type names between the
subtrees in class $i$, giving another factor $g_i!$. Since the type name 
changes in a given class do not affect the changes in the other classes, 
the factors from each class have to be multiplied to obtain the result.
\end{proof}

\begin{remark} 
(1)~See Figure~\ref{combinatorics_fig_invariant_genetree}, 
\ref{combinatorics_fig_invariant_graph_theoretic} for two representations 
of 
\[({\bf t},{\bf n})=\big(((2,1,0),(3,1,0),(4,0)), (2,2,3)\big)
\] 
which has $c({\bf t},{\bf n})=2$. \\[0.5ex]
(2)~When implementing the recursion \eqref{eq_number_of_permutations} on 
a computer, one obviously has to compute isomorphy classes of subtrees 
of a given tree. There, we have found it useful to pass to planar 
representatives of the given graph-theoretic rooted trees and implement 
a total order on such trees (for which there are various possibilities). 
\end{remark}

\begin{figure}
\begin{center}
\subfigure[Exchanging type I and II does not alter the genetree.] {
\label{combinatorics_fig_invariant_genetree}
\setlength{\nodedist}{40pt}
\begin{tikzpicture}[scale=0.8, transform shape,node distance=\nodedist]
\tikzstyle{level 1}=[level distance=0.8\nodedist]
\node[very thick, draw,circle] (root) at (0,0) {}
child {node[draw,circle] (m1) {}
child {node[draw,circle,label=right:\footnotesize{I:2}] (m4) {}}
child {node[draw,circle,label=right:\footnotesize{II:2}] (m5) {}
child {edge from parent[draw=none]}}}
child {node[draw,circle,label=right:\footnotesize{III:3}] (m2) {}};
\end{tikzpicture}
\hspace{1cm}
}
\subfigure[Exchanging type I and II does not alter the graph-theoretic tree.] {
\label{combinatorics_fig_invariant_graph_theoretic}
\setlength{\nodedist}{40pt}
\hspace{1cm}
\begin{tikzpicture}[scale=0.8, transform shape,node distance=\nodedist]
\tikzstyle{level 1}=[level distance=0.8\nodedist,sibling distance=2.5\nodedist]
\tikzstyle{level 2}=[sibling distance=1\nodedist]
\tikzstyle{level 3}=[sibling distance=0.5\nodedist]

\node[very thick, draw,circle] (root) at (0,0) {}
child {node[draw,circle] (m1) {}
child {node[draw,circle,label=right:\footnotesize{I}] (m4) {}
child {node[draw,circle] {}}
child {node[draw,circle] {}}}
child {node[draw,circle,label=right:\footnotesize{II}] (m5) {}
child {node[draw,circle] {}}
child {node[draw,circle] {}}}}
child {node[draw,circle,label=right:\footnotesize{III}] (m2) {}
child {node[draw,circle] {}}
child {node[draw,circle] {}}
child {node[draw,circle] {}}};
\end{tikzpicture}
}
\end{center}
\caption{The effect that reordering does not change the tree visualised in both corresponding representations.}
\label{combinatorics_fig_invariant}
\end{figure}
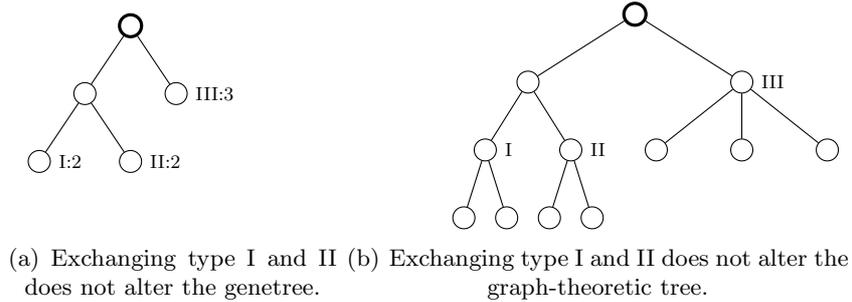

\subsection{Speed-up: Precomputations and multiple parameter sets}

\label{sect:precomp}

Assume that for some $\mathcal{A} \subset \mathcal{T}^*$, 
$p_\theta(\mathbf{t}',\mathbf{n}')$ is (numerically) known 
for all $(\mathbf{t}',\mathbf{n}') \in \mathcal{A}$. 
In practice, this can be achieved by including in $\mathcal{A}$ only 
such samples for which (\ref{ordered_ims_recursion}) can be solved 
numerically on the given computer architecture. 

This information can be combined with importance sampling schemes as
discussed above by running the proposal chains only until they hit
$\mathcal{A}$, thus reducing the variance of the estimators: 
Let $\tilde{\mathcal{H}} = (\tilde{H}_i) := (H_{-i})$ be the time-reversed history, 
$(\mathbf{t},\mathbf{n}) \in \mathcal{T}$ with $|\mathbf{n}|=n$ be given 
and let $\Q$ be a proposal distribution 
(compatible with \eqref{importance_eq_support_condition}) 
under which $(\tilde{H}_i)$ is a Markov chain, starting from 
$\tilde{H}_0=(\mathbf{t},\mathbf{n})$. Then we have 
\begin{align} 
\label{eq_proposal_exp_precomp}
p_\theta(\mathbf{t},\mathbf{n}) 
= \frac{\lambda_n}{rn+\lambda_n} 
\E_\Q\bigg[ \Big( \prod_{i=0}^{\tau_\mathcal{A}-1} 
\frac{\P_{\theta,n}(\tilde{H}_{i+1}\to \tilde{H}_{i})}{
\Q(\tilde{H}_{i}\to \tilde{H}_{i+1})} \Big) 
g(n,|\tilde{H}_{\tau_\mathcal{A}}|) 
(|\tilde{H}_{\tau_\mathcal{A}}|r+\lambda_{|\tilde{H}_{\tau_\mathcal{A}}|}) p_\theta(\tilde{H}_{\tau_\mathcal{A}})
\bigg],
\end{align}
where $\tau_{\mathcal{A}}:=\min\{ i : \tilde{H}_i \in \mathcal{A}\}$ and 
$|\tilde{H}_{\tau_\mathcal{A}}|$ denotes the number of samples in 
$\tilde{H}_{\tau_\mathcal{A}}$. 
Analogous to \eqref{eq_proposal_estimator}, by averaging 
the term inside the $\Q$-expectation in \eqref{eq_proposal_exp_precomp} 
over independent draws from $\Q$, this yields an 
unbiased estimator of $p_\theta(\mathbf{t},\mathbf{n})$ whose variance 
will be smaller than that of \eqref{eq_proposal_estimator}. 

For given $(\mathbf{t},\mathbf{n})=h_0, h_1,\dots,h_s \in \mathcal{T}^*$ 
with $h_i \not\in \mathcal{A}$, $i=0,1,\dots,s-1$, $h_s\in \mathcal{A}$, 
we have 
\begin{align*} 
\P_{\theta,n}\big( & (H_{-s},H_{-s+1},\dots,H_0) = (h_s,\dots,h_0)\big) \\
& = \, \P_{\theta,n}( \mathcal{H} \: \text{hits}\: h_s) 
\Big( \prod_{i=0}^{s-1}\P_{\theta,n}(h_{i+1}\to h_{i}) \Big) 
\frac{\lambda_n}{rn+\lambda_n} 
\end{align*}
by the Markov property under $\P_{\theta,n}$, 
thus \eqref{eq_proposal_exp_precomp} follows 
from \eqref{eq:relGandhitting}, Lemma~\ref{lem:Gandp} and the 
Markov property under $\Q$. 

Note that \eqref{eq_proposal_estimator} and the analogous estimator
built from \eqref{eq_proposal_exp_precomp} can be used to
simultaneously estimate $p_\theta(\mathbf{t},\mathbf{n})$ for various
values of $\theta$ from the \emph{same} runs under a given $\Q$ (of
course, yielding correlated estimators). This can be computationally
more efficient for example when computing likelihood surfaces.
See, e.g., \cite{Tavare2004}, Sect.~6.3 on how to combine estimators from 
different runs.

\subsection{Estimating times and aspects of the genealogy given the data} 

\label{sect:histestimation}

The time-reversed history $(\tilde{H}_i) = (H_{-i})$
describes the skeleton chain of \mbox{a(n $n$-)$\Lambda$}-coalescent with 
mutations according to the IMS model. It is straightforward to augment this 
with `real times' (on the coalescent time scale): 
Given $\tilde{\H}=(\tilde{H}_0,\dots,\tilde{H}_{\tau-1})$, 
the coalescent process will spend time $V_i$ in the $i$-th state, 
where the $V_i$ are conditionally independent with 
$\mathcal{L}(V_i|\tilde{H}) = 
\text{Exp}(r|\tilde{H}_i|+\lambda_{|\tilde{H}_i|})$, thus 
$T_i:=V_0+\cdots+V_{i-1}$, the time of the $i$-th event, can be readily 
simulated given $\tilde{\H}$. Furthermore, for any function 
$f\big((\tilde{H}_i),(T_i)\big)$ of the reversed history and its (coalescent) 
time embedding, we have 
\begin{align} 
\label{eq:est_exp_hist_functional}
\E_{\theta,n}& \Big[f\big((\tilde{H}_i),(T_i)\big) 
\1_{\{H_0=(\mathbf{t},\mathbf{n})\}}\Big] \\ \notag
& = \, 
\E_\Q\Big[ \frac{\P_{\theta,n}(\tilde{\H})}{\Q(\tilde{\H})} 
f\big((\tilde{H}_i),(T_i)\big) \1_{\{H_0=(\mathbf{t},\mathbf{n})\}} \Big]
\end{align}
for any proposal distribution $\Q$ satisfying 
\eqref{importance_eq_support_condition}, where implicitly, 
the conditional law of $(T_i)$ given $\tilde\H=(\tilde{H}_i)$ is the same 
under $\Q$ and under $\P_{\theta,n}$. Thus, in analogy with 
\eqref{eq_proposal_estimator}, 
\begin{align} 
\frac{1}{M} \sum_{j=1}^M \1_{\{(\tilde\H^{(j)})_0 =({\bf t},{\bf n})\}} \frac{d\P_{\theta,n}} {d\Q}(\tilde\H^{(j)}) f\big(\tilde\H^{(j)},(T_i^{(j)})\big)
\end{align}
is an unbiased and consistent estimator of 
\eqref{eq:est_exp_hist_functional}, where $\tilde\H^{(1)},\dots,
\tilde\H^{(M)}$ and the corresponding $(T_i^{(1)}),\dots,(T_i^{(M)})$ 
are independently drawn from $\Q$. 
\smallskip

For example, using $f\big((\tilde{h}_i),(t_i)\big)=t_1+\cdots+t_{\tau-1}$ 
or $f\big((\tilde{h}_i),(t_i)\big)=\1(t_1+\cdots+t_{\tau-1} \leq x)$, 
combined with an estimate of $p_\theta(\mathbf{t},\mathbf{n})$, this 
approach can be used to estimate the conditional mean or even the 
conditional distribution of the time to the most recent ancestor of the sample, given the observed data. Similarly, the conditional age 
of a particular mutation can be estimated (when undoing the 
equivalence relation $\sim$). This extends the line of thought 
from \cite{Griffiths1994} to the Lambda-coalescent context.

\end{document}